\numberwithin{equation}{section} \thispagestyle{empty}
\newtheorem{theorem}{Theorem}[section]
\newtheorem{corollary}{Corollary}[section]
\newtheorem{lemma}{Lemma}[section]
\newtheorem{assertion}{Assertion}[section]
\theoremstyle{definition}
\newtheorem{remark}{Remark}
\def\paramT{\delta}
\def\paramY{\rho}
\def\eqDef{:={}}
\renewcommand{\P}{\mathsf{P}}
\newcommand{\D}{\mathsf{D}\hskip 1pt}
\newcommand{\E}{\mathsf{E}\hskip 1pt}
\newcommand{\p}[1]{\mathsf{p}_{#1}}
\newcommand{\Rline}{\mathsf{R}}
\newcommand{\BesselI}[1]{I_{#1}}
\newcommand{\BesselK}[1]{K_{#1}}
\newcommand{\DerC}[2]{\mathsf{C}_{\,#1}^{[#2]}}
\newcommand{\DerU}[2]{\mathsf{U}_{\,#1}^{[#2]}}
\newcommand{\Ruat}{u_{\alpha,t}}
\newcommand{\Huat}{u_{\alpha,t}^{[\mathcal{M}]}}
\newcommand{\UGauss}[2]{\varPhi_{\left({#1},{#2}\right)}}
\newcommand{\Ugauss}[2]{\varphi_{\left({#1},{#2}\right)}}
\newcommand{\adjustL}{\varkappa}
\newcommand{\homN}[1]{N_{#1}}
\newcommand{\homV}[1]{V_{#1}}
\newcommand{\homR}[1]{R_{#1}}
\newcommand{\homM}[1]{M_{#1}}
\newcommand{\Tich}[1]{\mathit{\Upsilon}_{#1}}
\newcommand{\Elem}[2]{\mathcal{I}^{[#2]}_{#1}}
\newcommand{\AInt}[2]{{\mathcal{#1}}_{#2}}
\newcommand{\EnOne}{\epsilon\,n_{u+cv}}
\newcommand{\T}[1]{T_{#1}}
\newcommand{\Y}[1]{Y_{#1}}
\newcommand{\muIG}{\mu}
\newcommand{\lambdaIG}{\lambda}
\newcommand{\HmuIG}{\hat{\mu}}
\newcommand{\NQuant}[1]{\kappa_{#1}}
\newcommand{\funU}[1]{\mathsf{z}_{#1}}
\newcommand{\parA}[1]{a_{#1}}
\newcommand{\parB}[1]{b_{#1}}
\newcommand{\cS}{c^{\ast}}
\def\ASB{\emph{ASTIN Bulletin\/}}
\def\AMS{\emph{Ann. Math. Statist.\/}}
\def\BM{\emph{Biometrica\/}}
\def\DAN{\emph{Doklady Akademii Nauk\/}}
\def\IME{\emph{In\-su\-ran\-ce: Ma\-the\-ma\-tics and Eco\-no\-mics\/}}
\def\JRSS{\emph{Jour\-nal of the Royal Sta\-tist. Soc., Ser. B\/}}
\def\JMAA{\emph{J. Math. Anal. Appl.\/}}
\def\JHE{\emph{Journal of Hydrologic Engineering\/}}
\def\RSA{\emph{Revue de Statistique Appliqu{\'e}e\/}}
\def\Risks{\emph{Risks\/}}
\def\RE{\emph{Radiotech. Electron.\/}}
\def\TPA{\emph{Theory Probab. Appl.\/}}
\begin{document}

\author[Vsevolod K. Malinovskii]{Vsevolod K. Malinovskii\footnote{This work was
supported by RFBR (grant No.~19-01-00045).}}

\keywords{Compound renewal process, Time of first level crossing,
Fixed-probability level, Kendall's identity, Inverse Gaussian approximation,
Generalized inverse Gaussian distribution.}

\address{Central Economics and Mathematics Institute (CEMI) of Russian Academy of Science,
117418, Nakhimovskiy prosp., 47, Moscow, Russia}

\email{Vsevolod.Malinovskii@mail.ru, admin@actlab.ru}

\urladdr{http:/\!/www.actlab.ru}

\title[APPROXIMATION FOR THE FIXED-PROBABILITY LEVEL]{APPROXIMATION OF THE FIXED-PROBABILITY
LEVEL\\[0pt]FOR A COMPOUND RENEWAL PROCESS}

\maketitle

\begin{abstract}
Dealing with compound renewal process with generally distributed jump sizes and
inter-renewal intervals, we focus on the approximation for the
fixed-probability level, which is the core of inverse level crossing problem.
We are developing an analytical technique presented in
\cite{[Malinovskii=2017a]}--\cite{[Malinovskii=2018]} and based on Kendall's
identity; this yields (see \cite{[Malinovskii=2018=dan=1]}) inverse Gaussian
approximation in the direct level crossing problem. These issues are of great
importance in risk theory.
\end{abstract}

\section{Introduction}\label{dgfjhngjk}

In this paper, we will be focussed on $\T{1}$, $\T{i}\overset{d}{=}T$,
$i=2,3,\dots$, with p.d.f. $f_{\T{1}}$ and $f_{T}$, which are independent
positive random variables, called intervals between renewals, and on
$\Y{i}\overset{d}{=}Y$, $i=1,2,\dots$, with p.d.f. $f_{Y}$, which are
independent positive random variables called jump sizes at the moments of
renewals. We assume that these sequences are mutually independent and
$\T{1}\overset{d}{=}T$, i.e., we confine ourselves to the ordinary renewal
process $\homN{s}\eqDef\max\big\{n>0:\sum_{i=1}^{n}\T{i}\leqslant s\big\}$,
with $\homN{s}\eqDef 0$, if $\T{1}>s$. Moreover, we focus on p.d.f.
$f_{\T{1}}$, $f_{T}$, and $f_{Y}$ bounded above by a finite constant; these
restriction may be relaxed, but not in this paper.

Let us introduce the random process
\begin{equation}\label{sdghtyjrtYY}
\homR{s}\eqDef u+cs-\homV{s},\quad s\geqslant 0,
\end{equation}
where $u\geqslant 0$ and $c\geqslant 0$ are constants,
$\homV{s}\eqDef\sum_{i=1}^{\homN{s}}\Y{i}$, with $\homV{s}\eqDef 0$, if
$\T{1}>s$. The random process $\homV{s}$, $s\geqslant 0$, is called compound
(ordinary) renewal processes; its trajectories are piecewise linear.

By the \emph{direct} level crossing problem we call the study of probability
$\P\big\{\Tich{u,c}\leqslant t\big\}$, where
\begin{equation}\label{wq4t5g4ywe}
\begin{aligned}
\Tich{u,c}&\eqDef\inf\left\{s>0:\homV{s}-cs>u\right\}
\\
&=\inf\left\{s>0:\homR{s}<0\right\},
\end{aligned}
\end{equation}
or $+\infty$, if $\homV{s}-cs\leqslant u$ for all $s>0$, whereas the
\emph{inverse} level crossing problem is focussed on the study of a solution
(with respect to $u$) to the equation
\begin{equation}\label{rtyujtkjtyk}
\P\big\{\Tich{u,c}\leqslant t\big\}=\alpha,
\end{equation}
where $\alpha$ is positive and reasonably small, e.g., $\alpha=0{.}05$. This
solution is denoted by $\Ruat(c)$, $c\geqslant 0$, and is called
fixed-probability level. It is easily seen that $\P\{\Tich{u,c}\leqslant
t\}=\P\big\{\inf_{0\leqslant s\leqslant t}\homR{s}<0\big\}$, and the left-hand
side of \eqref{rtyujtkjtyk} can be rewritten accordingly.

The fixed-probability level defined by equation \eqref{rtyujtkjtyk} is an
implicit function. Its analysis is based on a detailed study of the probability
in the left-hand side of \eqref{rtyujtkjtyk}, i.e., on the direct level
crossing problem. When the random variables $T$ and $Y$ are exponentially
distributed with parameters $\paramT>0$ and $\paramY>0$, the random process
$\homN{s}$, $s\geqslant 0$, is a Poisson process with intensity $\paramT$. In
this case, $\P\big\{\homV{s}\leqslant x\big\}$, $\E\big(\homV{s}\big)$,
$\D\big(\homV{s}\big)$, and $\P\big\{\Tich{u,c}\leqslant t\big\}$ are expressed
in a closed form, using elementary and special functions such as the modified
Bessel functions $\BesselI{n}(z)$ of the first kind of order $n$. Consequently,
equation \eqref{rtyujtkjtyk} is written explicitly and the study of an implicit
function $\Ruat(c)$, $c\geqslant 0$, is carried out in
\cite{[Malinovskii=2012]}, \cite{[Malinovskii=2014b]}; it goes along the road
map set before in \cite{[Malinovskii=2009]}, \cite{[Malinovskii=2014a]} in the
diffusion model.

In the case of generally distributed random variables $T$ and $Y$, to find a
solution to the direct (let alone inverse) level crossing problem in terms of
elementary and special functions seems impossible, except for a few very
special cases, whence our attention to approximations of
$\P\big\{\Tich{u,c}\leqslant t\big\}$, as $u\to\infty$ and $u,t\to\infty$, and
of $\Ruat(c)$, $c\geqslant 0$, as $t\to\infty$.

We proceed investigating $\Ruat(c)$, $c\geqslant 0$, from the inverse Gaussian
approximation\footnote{This stands out from a number of previously known
approximations, of which Cram{\'e}r's and diffusion, obtained by means of the
invariance principle, are the most famous.} obtained in
\cite{[Malinovskii=2018=dan=1]},
\cite{[Malinovskii=2017a]}--\cite{[Malinovskii=2018]}. In a nutshell, we use
Kendall's identity for $\P\big\{\Tich{u,c}\leqslant t\big\}$, which expresses
this probability through convolution powers of $f_{T}$ and $f_{Y}$; the central
limit theory is then applied to them. This method is widely applicable, e.g.,
it allows us to find approximations for the first-order derivatives
$\frac{\partial}{\partial c}\,\P\big\{\Tich{u,c}\leqslant t\big\}$ and
$\frac{\partial}{\partial u}\,\P\big\{\Tich{u,c}\leqslant t\big\}$, and even
for higher-order derivatives, such as $\frac{\partial^2}{\partial
c^2}\,\P\big\{\Tich{u,c}\leqslant t\big\}$ and $\frac{\partial^2}{\partial
u^2}\,\P\big\{\Tich{u,c}\leqslant t\big\}$. This issue is central (see
Theorem~\ref{etrtjt}) in the study of, e.g., monotony and convexity of the
implicitly defined function $\Ruat(c)$, $c\geqslant 0$.

This approach is aimed at obtaining a large set of results using standard
techniques. Such results include approximations and estimates of the rate of
convergence, as well as various refinements, e.g., asymptotic expansions. The
main focus is on the diversity and accuracy of the results, rather than the
minimality of technical conditions, although the conditions in these results
are close to minimal\footnote{Compare with \cite{[Borovkov=2015]}, where
minimization of conditions and a more general level are focussed.}.

In applications, both direst and inverse level crossing problems are of a great
importance. In risk theory, the function $\Ruat(c)$, $c\geqslant 0$, models a
non-ruin capital\footnote{Apparently, this mathematical concept, viewed as an
implicit function, was focussed straightforwardly for the first time in
\cite{[Malinovskii=2012]}, where it was referred to as ``level capital'', or
``$\alpha$-level initial capital'' (see \cite{[Malinovskii=2012]},
Definition~3.1). In \cite{[Malinovskii=2014b]}, the term ``ruin capital'',
emphasizing its role in matters of solvency, was used instead; if one seeks to
escape ruin, the term ``non-ruin capital'' sounds more appropriate.} that makes
the probability of ruin over time $t$ equal to a predetermined value $\alpha$,
chosen as an acceptable degree of insolvency. This academic concept is related
to fundamental methods of insurance solvency's regulation (see, e.g.,
\cite{[Beard-et-al.-1984]}, \cite{[Daykin-et-al.-1996]},
\cite{[Pentikainen-et-al.-1989]}, \cite{[Sandstrom-2006]},
\cite{[Sandstrom=2011]}); in practice, they are mainly implemented by
simulation. Analytically, as a problem of collective risk theory, the inverse
level crossing problem was first investigated in \cite{[Malinovskii=2012]} (see
also \cite{[Malinovskii=2014b]}), where equitable solvent controls in a
multi-period game model of risk were considered; as a partial single-period
model, Lunderg's model with exponentially distributed claim size was focussed
in \cite{[Malinovskii=2012]}; similar issue in the diffusion risk model was
investigated in \cite{[Malinovskii=2014a]}.

The rest of this paper is arranged as follows. In Section~\ref{srteyrjfr}, we
recall Kendall's identity. In Section~\ref{rgerhyryttryjh}, we derive similar
identities for derivatives $\frac{\partial}{\partial
c}\,\P\big\{\Tich{u,c}\leqslant t\big\}$ and $\frac{\partial}{\partial
u}\,\P\big\{\Tich{u,c}\leqslant t\big\}$. In Section~\ref{asrgrhger}, we
outline the inverse Gaussian approximation (see
\cite{[Malinovskii=2018=dan=1]}) in the direct level crossing problem; this
result is presented in detail in \cite{[Malinovskii=2017a]},
\cite{[Malinovskii=2017b]}, \cite{[Malinovskii=Malinovskii=2017]}. In
Section~\ref{wqstgerhs}, we establish approximations for derivatives, using the
same stages as in the proof of inverse Gaussian approximation. In
Section~\ref{srgterjer}, which is core of this paper, we focus on
approximations in the inverse level crossing problem: first, we deal with
structural results, then with monotony and convexity\footnote{Recall that a
differentiable function is convex (i.e., has the form $\smile$) if its second
derivative is positive.} results, and finally with heuristic fixed-probability
level and with elementary bounds on the fixed-probability level, which are a
tool for numerical calculations.

\section{Kendall's identity: a keystone result}\label{srteyrjfr}

Let us introduce\footnote{The $\inf$-definition for $\homM{x}$, $x>0$, in
contrast to equivalent $\max$-definition for $\homN{s}$, $s\geqslant 0$, is a
hint on the difference between these renewal processes.}
\begin{equation}\label{erthtjthew}
\homM{x}\eqDef\inf\left\{k\geqslant 1:\sum_{i=1}^{k}Y_{i}>x\,\right\}-1,\quad
x>0,
\end{equation}
which is a renewal process generated by the random variables $\Y{i}$,
$i=1,2,\dots$. The following result is known as Kendall's identity (see first
\cite{[Kendall=1957]}, and then
\cite{[Borovkov=1965]}--\cite{[Borovkov=Dickson=2008]}, \cite{[Keilson=1963]},
\cite{[Rogozin=1966]}, \cite{[Skorohod=1991]}, \cite{[Zolotarev=1964]}).

\begin{assertion}[Kendall's identity]\label{23456uy7}
With $0<v<t$, we have
\begin{equation}\label{dfgbehredf}
\begin{aligned}
\P\big\{v<\Tich{u,c}\leqslant
t\mid\T{1}=v\big\}&=\int_{v}^{t}\dfrac{u+cv}{u+cz}\;
\p{\,\sum_{i=2}^{\homM{u+cz}+1}\T{i}}(z-v)\,dz
\\[0pt]
&=\int_{v}^{t}\dfrac{u+cv}{u+cz}\sum_{n=1}^{\infty}
\P\big\{\homM{u+cz}=n\big\}\,f_{T}^{*n}(z-v)\,dz.
\end{aligned}
\end{equation}
\end{assertion}

Proceeding from Assertion~\ref{23456uy7} and using the equality
\begin{equation}\label{ewrktulertye}
\begin{aligned}
\P\big\{\Tich{u,c}\leqslant
t\big\}&=\int_{0}^{t}\P\big\{u+cv-Y<0\big\}\,f_{\T{1}}(v)\,dv
\\[0pt]
&+\int_{0}^{t}\P\big\{v<\Tich{u,c}\leqslant
t\mid\T{1}=v\big\}\,f_{\T{1}}(v)\,dv,
\end{aligned}
\end{equation}
we switch back to (unconditional) distribution of the level crossing time.

The identity \eqref{dfgbehredf} may be rewritten exclusively in terms of
$n$-fold convolutions $f_{T}^{*n}$ and $f_{Y}^{*n}$. Indeed, bearing in mind
that $\Y{i}$, $i=1,2,\dots$, are i.i.d., we have
\begin{equation*}
\begin{aligned}
\P\big\{\homM{u+cv+cy}=n\big\}&=\P\left\{\sum_{i=1}^{n}\Y{i}\leqslant
u+cv+cy<\sum_{i=1}^{n+1}\Y{i}\right\}
\\[0pt]
&=\int_{0}^{u+cv+cy}f^{*n}_{Y}(u+cv+cy-z)\,\P\{\Y{n+1}>z\}\,dz.
\end{aligned}
\end{equation*}
Making the change of variables $y=z-v$ in \eqref{dfgbehredf}, we rewrite it as
\begin{equation}\label{wertyrjhn}
\begin{aligned}
\P\big\{v<\Tich{u,c}\leqslant t\mid\T{1}=v\big\}
&=\sum_{n=1}^{\infty}\int_{0}^{t-v}\frac{u+cv}{u+cv+cy}
\int_{0}^{u+cv+cy}\P\big\{\Y{n+1}>z\big\}
\\[2pt]
&\times f_{Y}^{*n}(u+cv+cy-z)\,f_{T}^{*n}(y)\,dy\,dz.
\end{aligned}
\end{equation}

Equality \eqref{dfgbehredf}, or its copy \eqref{wertyrjhn}, and equality
\eqref{ewrktulertye} are fundamental in a series of approximations and
closed-form results presented in \cite{[Malinovskii=2012]},
\cite{[Malinovskii=2014b]}--\cite{[Malinovskii=Malinovskii=2017]}. In
particular, when $T$ and $Y$ are exponentially distributed with parameters
$\paramT>0$ and $\paramY>0$, closed-form expressions for
$\P\big\{\Tich{u,c}\leqslant t\big\}$ follow from the next corollary of
Assertion~\ref{23456uy7}.

\begin{corollary}\label{etyjhtrjrt}
For $Y$ exponentially distributed with parameter $\paramY>0$, we have
\begin{equation}\label{wedteyjtkj}
\begin{aligned}
\P\big\{\Tich{u,c}\leqslant
t\big\}&=\int_{0}^{t}e^{-\paramY\,(u+cs)}\,\Bigg(f_{\T{1}}(s)
+\frac{1}{u+cs}\sum_{n=1}^{\infty}\frac{\big(\paramY\,(u+cs)\big)^n}{n!}
\\[0pt]
&\times\int_{0}^{s}(u+cv)f_{T}^{*n}(s-v)f_{\T{1}}(v)\,dv\Bigg)\,ds.
\end{aligned}
\end{equation}
\end{corollary}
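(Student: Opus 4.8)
The plan is to specialize the two keystone facts just recalled — Kendall's identity \eqref{dfgbehredf} and the renewal decomposition \eqref{ewrktulertye} — to the exponential case, and then to interchange the order of the two outer integrations.

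First I would evaluate the two ingredients that depend on the law of $Y$. Since $Y$ is exponential with parameter $\paramY$, the first integrand in \eqref{ewrktulertye} is $\P\{u+cv-Y<0\}=\P\{Y>u+cv\}=e^{-\paramY(u+cv)}$. Moreover, when the jumps $\Y{i}$ are i.i.d.\ exponential with parameter $\paramY$, the renewal process $\homM{x}$ of \eqref{erthtjthew} is the counting process of an ordinary Poisson process of intensity $\paramY$, so that $\homM{x}$ has the Poisson law with mean $\paramY x$, i.e.\ $\P\{\homM{u+cz}=n\}=e^{-\paramY(u+cz)}\big(\paramY(u+cz)\big)^{n}/n!$ for every $n\geqslant 1$. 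Substituting this into the second line of \eqref{dfgbehredf} and pulling the common factor $e^{-\paramY(u+cz)}$ out of the sum gives
\begin{equation*}
\P\big\{v<\Tich{u,c}\leqslant t\mid\T{1}=v\big\}
=\int_{v}^{t}\frac{u+cv}{u+cz}\;e^{-\paramY(u+cz)}\sum_{n=1}^{\infty}\frac{\big(\paramY(u+cz)\big)^{n}}{n!}\,f_{T}^{*n}(z-v)\,dz .
\end{equation*}

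Next I would substitute this expression together with $e^{-\paramY(u+cv)}$ into \eqref{ewrktulertye} and integrate against $f_{\T{1}}(v)\,dv$. The first term of \eqref{ewrktulertye} then becomes $\int_{0}^{t}e^{-\paramY(u+cs)}f_{\T{1}}(s)\,ds$ after renaming $v$ to $s$, which is precisely the $f_{\T{1}}$-summand inside the parentheses in \eqref{wedteyjtkj}. The second term is an iterated integral over the triangle $\{0<v<z<t\}$; I would invoke Tonelli's theorem — legitimate since $f_{T}^{*n}\geqslant 0$, $f_{\T{1}}\geqslant 0$, the Poisson weights are nonnegative, and $0\leqslant(u+cv)/(u+cz)\leqslant 1$ on that triangle — both to handle the series termwise and to rewrite $\int_{0}^{t}\!\big(\int_{v}^{t}\!\cdots\,dz\big)\,dv$ as $\int_{0}^{t}\!\big(\int_{0}^{z}\!\cdots\,dv\big)\,dz$. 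Extracting from the inner $v$-integral the $v$-independent factor $e^{-\paramY(u+cz)}(u+cz)^{-1}\big(\paramY(u+cz)\big)^{n}/n!$, so that only $(u+cv)\,f_{T}^{*n}(z-v)\,f_{\T{1}}(v)$ remains inside it, and renaming $z$ to $s$, I would then add the two contributions and recognize the result as exactly the right-hand side of \eqref{wedteyjtkj}.

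There is essentially no obstacle here: once Assertion~\ref{23456uy7} is granted, the statement is a bookkeeping computation. The only points that deserve a word are the termwise treatment of the series and the Fubini--Tonelli interchange, and both follow immediately from nonnegativity (alternatively, the boundedness of $f_{\T{1}}$ and $f_{T}$ assumed in Section~\ref{dgfjhngjk}, together with $\sum_{n\geqslant 0}\P\{\homM{u+cz}=n\}=1$, yields absolute convergence, but this is not needed).
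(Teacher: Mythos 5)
Your proof is correct and follows the same route as the paper: specialize $\P\{Y>u+cv\}$ and the Poisson law of $\homM{\cdot}$, substitute into \eqref{dfgbehredf} and \eqref{ewrktulertye}, and reorder the integrations over the triangle $\{0<v<z<t\}$. The paper's own proof is just a terser version of this, leaving the Fubini--Tonelli bookkeeping implicit where you spell it out.
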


\begin{proof}[Proof of Corollary~\ref{etyjhtrjrt}]\label{eartfygul}
For $\Y{i}\overset{d}{=}Y$, $i=1,2,\dots$, when $Y$ is exponentially
distributed with parameter $\paramY$, we have
$\P\big\{u+cv-Y<0\big\}=e^{-\paramY\,(u+cv)}$ and
\begin{equation*}
\P\big\{\homM{u+cs}=n\big\}=e^{-\paramY\,(u+cs)}\,\frac{\big(\paramY\,(u+cs)\big)^n}{n!},\quad
n=1,2,\dots,
\end{equation*}
whence equality \eqref{ewrktulertye} rewrites as \eqref{wedteyjtkj}.
\end{proof}

\section{Derivatives of $\P\big\{\Tich{u,c}\leqslant t\big\}$ via Kendall's identity}\label{rgerhyryttryjh}

Let p.d.f. $f_{T}$ and $f_{Y}$ be differentiable. Kendall's identity
\eqref{dfgbehredf}, or its copy \eqref{wertyrjhn}, and equality
\eqref{ewrktulertye} allow us to express the derivatives of
$\P\big\{\Tich{u,c}\leqslant t\big\}$ with respect to $c$ and $u$ in a similar
way.

\subsection{Derivative $\frac{\partial}{\partial c}\,\P\big\{\Tich{u,c}\leqslant t\big\}$}\label{sdtyjfer}

Let us start with the derivative of $\P\big\{\Tich{u,c}\leqslant t\big\}$ with
respect to $c$ and introduce the following expressions:
\begin{equation}\label{sdfghfghnfgn}
\begin{aligned}
\DerC{u,c}{1}(t\mid
v)&=-\sum_{n=1}^{\infty}\int_{0}^{t-v}\frac{uy}{(u+cv+cy)^{\,2}}
\\[0pt]
&\times\int_{0}^{u+cv+cy}\P\big\{\Y{n+1}>z\big\}
f_{Y}^{*n}\big(u+cv+cy-z\big)\,dz\,f_{T}^{*n}(y)\,dy,
\\[0pt]
\DerC{u,c}{2}(t\mid
v)&=\sum_{n=1}^{\infty}\int_{0}^{t-v}\frac{(u+cv)\,(v+y)}{u+cv+cy}
\\[0pt]
&\times\int_{0}^{u+cv+cy}\P\big\{\Y{n+1}>z\big\}\,\Bigg\{\,f_{Y}(0)f_{Y}^{*(n-1)}\big(u+cv+cy-z\big)
\\[0pt]
&+\int_{0}^{u+cv+cy-z}f_{Y}^{\,\prime}(\xi)
f_{Y}^{*(n-1)}\big(u+cv+cy-z-\xi\big)\,d\xi\,\Bigg\}\,dz\,
\\[0pt]
&\times f_{T}^{*n}(y)\,dy,
\\[0pt]
\DerC{u,c}{3}(t\mid
v)&=\sum_{n=1}^{\infty}f_{Y}^{*n}(0)\int_{0}^{t-v}\frac{(u+cv)\,(v+y)}{u+cv+cy}\,
\P\big\{\Y{n+1}>u+cv+cy\big\}\,
\\[0pt]
&\times f_{T}^{*n}(y)\,dy.
\end{aligned}
\end{equation}

\begin{lemma}\label{yukiyulu}
For $c>0$, $u>0$, $t>v>0$, we have
\begin{equation}\label{wrgrhrtrr}
\begin{aligned}
\frac{\partial}{\partial c}\,\P\big\{\Tich{u,c}\leqslant
t\big\}&=-\int_{0}^{t}f_{Y}(u+cv)\,v\,f_{\T{1}}(v)\,dv
\\[0pt]
&+\int_{0}^{t}\frac{\partial}{\partial c}\,\P\big\{v<\Tich{u,c}\leqslant
t\mid\T{1}=v\big\}\,f_{\T{1}}(v)\,dv,
\end{aligned}
\end{equation}
where
\begin{equation}\label{srdtyrjrtj}
\frac{\partial}{\partial c}\,\P\big\{v<\Tich{u,c}\leqslant
t\mid\T{1}=v\big\}=\DerC{u,c}{1}(t\mid v)+\DerC{u,c}{2}(t\mid v)
+\DerC{u,c}{3}(t\mid v).
\end{equation}
\end{lemma}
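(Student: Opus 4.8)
The plan is to differentiate the unconditional decomposition \eqref{ewrktulertye} with respect to $c$ and to match the resulting terms with the three expressions in \eqref{sdfghfghnfgn}. The first summand on the right of \eqref{ewrktulertye} is immediate: since $\P\{u+cv-Y<0\}=\P\{Y>u+cv\}$, one gets $\frac{\partial}{\partial c}\,\P\{u+cv-Y<0\}=-v\,f_{Y}(u+cv)$, and multiplying by $f_{\T{1}}(v)$ and integrating over $(0,t)$ produces the first term on the right of \eqref{wrgrhrtrr}; boundedness of $f_{Y}$ lets one pass $\frac{\partial}{\partial c}$ under $\int_{0}^{t}$. It then remains to differentiate $\P\{v<\Tich{u,c}\leqslant t\mid\T{1}=v\}$, and for this I would use not \eqref{dfgbehredf} itself but its rewritten copy \eqref{wertyrjhn}, in which the whole $c$-dependence is displayed explicitly and the convolution densities $f_{T}^{*n}$, $f_{Y}^{*n}$ carry no $c$.

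Next, write $B\eqDef u+cv+cy$, so that $\frac{\partial B}{\partial c}=v+y$. In \eqref{wertyrjhn} the parameter $c$ occurs only in the prefactor $(u+cv)/B$, in the upper limit $B$ of the inner integral, and in the argument $B-z$ of $f_{Y}^{*n}$; a short computation gives $\frac{\partial}{\partial c}\big((u+cv)/B\big)=-uy/B^{2}$. Hence the product rule for the prefactor and the Leibniz rule for the inner integral split $\frac{\partial}{\partial c}\,\P\{v<\Tich{u,c}\leqslant t\mid\T{1}=v\}$ into three groups of terms. Differentiating the prefactor reproduces exactly $\DerC{u,c}{1}(t\mid v)$, the sign being supplied by $-uy/B^{2}$. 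The boundary contribution of the Leibniz rule, namely $\frac{(u+cv)(v+y)}{B}\,\P\{\Y{n+1}>B\}\,f_{Y}^{*n}(0)$ summed over $n$ and integrated in $y$, is exactly $\DerC{u,c}{3}(t\mid v)$. The interior contribution contains $(v+y)\,\big(f_{Y}^{*n}\big)^{\prime}(B-z)$, and becomes $\DerC{u,c}{2}(t\mid v)$ once one inserts, with $w=B-z$, the elementary identity
\begin{equation*}
\big(f_{Y}^{*n}\big)^{\prime}(w)=f_{Y}(0)\,f_{Y}^{*(n-1)}(w)
+\int_{0}^{w}f_{Y}^{\,\prime}(\xi)\,f_{Y}^{*(n-1)}(w-\xi)\,d\xi,
\end{equation*}
which follows by writing $f_{Y}^{*n}=f_{Y}*f_{Y}^{*(n-1)}$ and differentiating under the integral. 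Adding the three pieces gives \eqref{srdtyrjrtj}, and substituting it into the differentiated \eqref{ewrktulertye} gives \eqref{wrgrhrtrr}.

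The formal differentiation above is routine; the \emph{main obstacle} is to justify interchanging $\frac{\partial}{\partial c}$ with the infinite sum $\sum_{n\geqslant1}$ and with the inner integrals, whose upper limit $B=u+cv+cy$ grows with $y$. I would settle this with the standing hypotheses. Since $f_{Y}$ is bounded by some $M$, one has $f_{Y}^{*n}(w)\leqslant M$ for all $n\geqslant1$ and all $w$, and, with $f_{Y}^{\,\prime}$ bounded, $\big|\big(f_{Y}^{*n}\big)^{\prime}(w)\big|$ is bounded uniformly in $n$ and $w$ as well; moreover $\sum_{n\geqslant1}f_{T}^{*n}(y)$ is the renewal density of the $\T{i}$-process and hence locally bounded. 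Combined with $\P\{\Y{n+1}>z\}\leqslant1$, this provides an integrable majorant for the relevant difference quotients over a small neighbourhood of $c$, so that dominated convergence legitimates every interchange. Carrying out this uniform-integrability bookkeeping (and treating the $n=1$ summand directly) is where the real work lies; the formal manipulations above then go through unchanged.
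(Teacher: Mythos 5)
Your proposal is correct and follows essentially the same route as the paper: differentiate the decomposition \eqref{ewrktulertye}, then differentiate \eqref{wertyrjhn} term by term using the product rule on the prefactor $(u+cv)/(u+cv+cy)$, the Leibniz rule for the $c$-dependent upper limit (giving $\DerC{u,c}{3}$), and the identity $(f_{Y}^{*n})^{\prime}=f_{Y}(0)f_{Y}^{*(n-1)}+f_{Y}^{\,\prime}*f_{Y}^{*(n-1)}$ (giving $\DerC{u,c}{2}$). Your added discussion of dominated convergence to justify the interchanges is a reasonable supplement to the paper's terse "by elementary calculations," and does not alter the argument.
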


\begin{proof}[Proof of Lemma~\ref{yukiyulu}]
The proof is based on identities \eqref{ewrktulertye} and \eqref{wertyrjhn},
i.e.,
\begin{equation}\label{ws4d5uhjtd}
\begin{aligned}
\P\big\{\Tich{u,c}\leqslant
t\big\}&=\int_{0}^{t}\P\big\{u+cv-Y<0\big\}\,f_{\T{1}}(v)\,dv
\\[0pt]
&+\int_{0}^{t}\P\big\{v<\Tich{u,c}\leqslant
t\mid\T{1}=v\big\}\,f_{\T{1}}(v)\,dv
\end{aligned}
\end{equation}
and
\begin{equation}\label{adsgfsfbdfnb}
\begin{aligned}
\P\big\{v<\Tich{u,c}\leqslant t\mid\T{1}=v\big\}
&=\sum_{n=1}^{\infty}\int_{0}^{t-v}\frac{u+cv}{u+cv+cy}
\int_{0}^{u+cv+cy}\P\big\{\Y{n+1}>z\big\}
\\[2pt]
&\times f_{Y}^{*n}\big(u+cv+cy-z\big)\,f_{T}^{*n}(y)\,dy\,dz.
\end{aligned}
\end{equation}

Differentiating \eqref{adsgfsfbdfnb}, we have
\begin{equation*}
\begin{aligned}
\frac{\partial}{\partial c}\,\P\big\{v<\Tich{u,c}\leqslant t\mid\T{1}=v\big\}
&=\sum_{n=1}^{\infty}\int_{0}^{t-v}\frac{\partial}{\partial
c}\,\Bigg(\frac{u+cv}{u+cv+cy} \int_{0}^{u+cv+cy}\P\big\{\Y{n+1}>z\big\}
\\[2pt]
&\times f_{Y}^{*n}(u+cv+cy-z)\,dz\,\Bigg)\,f_{T}^{*n}(y)\,dy.
\end{aligned}
\end{equation*}
The integrand is
\begin{equation*}
\begin{aligned}
\frac{\partial}{\partial c}&\,\Bigg(\frac{u+cv}{u+cv+cy}
\int_{0}^{u+cv+cy}\P\big\{\Y{n+1}>z\big\}\,f_{Y}^{*n}(u+cv+cy-z)\,dz\,\Bigg)
\\[0pt]
&=\frac{\partial}{\partial c}\,\Bigg(\frac{u+cv}{u+cv+cy}\,\Bigg)
\int_{0}^{u+cv+cy}\P\big\{\Y{n+1}>z\big\}\,f_{Y}^{*n}(u+cv+cy-z)\,dz
\\[0pt]
&+\frac{u+cv}{u+cv+cy}\,\frac{\partial}{\partial
c}\,\Bigg(\int_{0}^{u+cv+cy}\P\big\{\Y{n+1}>z\big\}\,f_{Y}^{*n}(u+cv+cy-z)\,dz\,\Bigg),
\end{aligned}
\end{equation*}
where
\begin{equation*}
\dfrac{\partial}{\partial
c}\,\Bigg(\dfrac{u+cv}{u+cv+cy}\,\Bigg)=-\dfrac{uy}{(u+cv+cy)^2}
\end{equation*}
and
\begin{equation*}
\begin{aligned}
\frac{\partial}{\partial
c}&\,\Bigg(\int_{0}^{u+cv+cy}\P\big\{\Y{n+1}>z\big\}\,f_{Y}^{*n}(u+cv+cy-z)\,dz\,\Bigg)
\\[0pt]
&=\int_{0}^{u+cv+cy}\P\big\{\Y{n+1}>z\big\}\,\Bigg(\frac{\partial}{\partial
c}\,f_{Y}^{*n}(u+cv+cy-z)\Bigg)\,dz
\\[0pt]
&+(v+y)\,\P\big\{\Y{n+1}>u+cv+cy\big\}\,f_{Y}^{*n}(0).
\end{aligned}
\end{equation*}
For $n\geqslant 2$, differentiation of $n$-fold convolutions yields
\begin{equation*}
\begin{aligned}
\frac{\partial}{\partial c}\,f_{Y}^{*n}(u+cv+cy-z)&=\frac{\partial}{\partial
c}\int_{0}^{u+cv+cy-z}f_{Y}\left(\left(u+cv+cy-z\right)-\zeta\right)f_{Y}^{*(n-1)}(\zeta)\,d\zeta
\\[0pt]
&=(v+y)\underbrace{\int_{0}^{u+cv+cy-z}f_{Y}^{\,\prime}\left(\left(u+cv+cy-z\right)-\zeta\right)
f_{Y}^{*(n-1)}(\zeta)\,d\zeta}_{\int_{0}^{u+cv+cy-z}f_{Y}^{\,\prime}(\xi)f_{Y}^{*(n-1)}(\left(u+cv+cy-z\right)-\xi)\,d\xi}
\\[0pt]
&+(v+y)\,f_{Y}(0)f_{Y}^{*(n-1)}(u+cv+cy-z),
\end{aligned}
\end{equation*}
whence, by elementary calculations, the result.
\end{proof}

\subsection{Derivative $\frac{\partial}{\partial u}\,\P\big\{\Tich{u,c}\leqslant t\big\}$}\label{asrdthdgjfg}

Let proceed with the derivative of $\P\big\{\Tich{u,c}\leqslant t\big\}$ with
respect to $u$ and introduce the following expressions:
\begin{equation*}
\begin{aligned}
\DerU{u,c}{1}(t\mid v)&=\sum_{n=1}^{\infty}\int_{0}^{t-v}
\frac{cy}{(u+cv+cy)^{\,2}}
\\[-2pt]
&\times\,\int_{0}^{u+cv+cy} \P\big\{\Y{n+1}>z\big\}\,
f_{Y}^{*n}(u+cv+cy-z)\,f_{T}^{*n}(y)\,dy\,dz,
\\[0pt]
\DerU{u,c}{2}(t\mid v)&=\sum_{n=1}^{\infty}\int_{0}^{t-v}\frac{u+cv}{u+cv+cy}
\\[-2pt]
&\times\,\int_{0}^{u+cv+cy}\P\big\{\Y{n+1}>z\big\}\,
\Bigg\{\,f_{Y}(0)f_{Y}^{*(n-1)}(u+cv+cy-z)
\\[0pt]
&+\int_{0}^{u+cv+cy-z}
f_{Y}^{\,\prime}(\xi)f_{Y}^{*(n-1)}(\left(u+cv+cy-z\right)-\xi)\,d\xi\,\Bigg\}
\\[0pt]
&\times f_{T}^{*n}(y)\,dy\,dz,
\\[0pt]
\DerU{u,c}{3}(t\mid v)&=\sum_{n=1}^{\infty}f_{Y}^{*n}(0)\int_{0}^{t-v}
\frac{u+cv}{u+cv+cy}\,\P\big\{\Y{n+1}>u+cv+cy\big\}\,
\\[0pt]
&\times f_{T}^{*n}(y)\,dy.
\end{aligned}
\end{equation*}

\begin{lemma}\label{ytuktyktly}
For $c>0$, $u>0$, $t>v>0$, we have
\begin{equation}\label{srdtyutryitk}
\begin{aligned}
\frac{\partial}{\partial u}\,\P\big\{\Tich{u,c}\leqslant
t\big\}&=-\int_{0}^{t}f_{Y}(u+cv)\,v\,f_{\T{1}}(v)\,dv
\\[0pt]
&+\int_{0}^{t}\frac{\partial}{\partial u}\,\P\big\{v<\Tich{u,c}\leqslant
t\mid\T{1}=v\big\}\,f_{\T{1}}(v)\,dv,
\end{aligned}
\end{equation}
where
\begin{equation}\label{serdtgfjkyfg}
\frac{\partial}{\partial u}\,\P\big\{v<\Tich{u,c}\leqslant
t\mid\T{1}=v\big\}=\DerU{u,c}{1}(t\mid v)+\DerU{u,c}{2}(t\mid v)
+\DerU{u,c}{3}(t\mid v).
\end{equation}
\end{lemma}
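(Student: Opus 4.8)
\emph{Sketch of proof.} The plan is to mirror, step for step, the proof of Lemma~\ref{yukiyulu}, replacing differentiation in $c$ by differentiation in $u$ throughout. The starting point is again the pair of identities \eqref{ewrktulertye}--\eqref{wertyrjhn} (reproduced in the proof of Lemma~\ref{yukiyulu} as \eqref{ws4d5uhjtd}--\eqref{adsgfsfbdfnb}): the first splits $\P\{\Tich{u,c}\leqslant t\}$ into the boundary term $\int_{0}^{t}\P\{u+cv-Y<0\}\,f_{\T{1}}(v)\,dv$ and the conditional term $\int_{0}^{t}\P\{v<\Tich{u,c}\leqslant t\mid\T{1}=v\}\,f_{\T{1}}(v)\,dv$, while the second writes the conditional probability as the convolution series of \eqref{wertyrjhn}. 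Differentiating the boundary term in $u$, via $\P\{u+cv-Y<0\}=\P\{Y>u+cv\}$, yields the first summand on the right-hand side of \eqref{srdtyutryitk}; differentiating the conditional term reduces, once the interchange of $\partial/\partial u$ with $\int_{0}^{t}(\cdot)\,f_{\T{1}}(v)\,dv$ is justified, to computing $\frac{\partial}{\partial u}\,\P\{v<\Tich{u,c}\leqslant t\mid\T{1}=v\}$, i.e., to establishing \eqref{serdtgfjkyfg}.

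For that derivative I would differentiate \eqref{adsgfsfbdfnb} term by term in $n$ and apply the product rule to the factor $\frac{u+cv}{u+cv+cy}$ times the inner integral $\int_{0}^{u+cv+cy}\P\{\Y{n+1}>z\}\,f_{Y}^{*n}(u+cv+cy-z)\,dz$. Three contributions arise, matching $\DerU{u,c}{1}(t\mid v)$, $\DerU{u,c}{2}(t\mid v)$, $\DerU{u,c}{3}(t\mid v)$. First, $\frac{\partial}{\partial u}\,\frac{u+cv}{u+cv+cy}=\frac{cy}{(u+cv+cy)^{2}}$, in contrast to the $c$-case where $\frac{\partial}{\partial c}\,\frac{u+cv}{u+cv+cy}=-\frac{uy}{(u+cv+cy)^{2}}$; this yields $\DerU{u,c}{1}$ (carrying a plus sign, whereas $\DerC{u,c}{1}$ carried a minus). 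Second, differentiating $f_{Y}^{*n}$ under the $z$-integral has chain factor $\frac{\partial}{\partial u}(u+cv+cy-z)=1$, in place of the factor $v+y$ occurring in the $c$-case; using for $n\geqslant 2$ the convolution identity $\frac{\partial}{\partial u}f_{Y}^{*n}(w)=f_{Y}(0)\,f_{Y}^{*(n-1)}(w)+\int_{0}^{w}f_{Y}^{\,\prime}(\xi)\,f_{Y}^{*(n-1)}(w-\xi)\,d\xi$ with $w=u+cv+cy-z$, treating $n=1$ directly, and multiplying by $\frac{u+cv}{u+cv+cy}$, one gets $\DerU{u,c}{2}$. Third, the Leibniz boundary contribution from the upper limit $u+cv+cy$ of the $z$-integral (whose $u$-derivative is $1$) equals $\P\{\Y{n+1}>u+cv+cy\}\,f_{Y}^{*n}(0)$; multiplying by $\frac{u+cv}{u+cv+cy}$ and summing over $n$ gives $\DerU{u,c}{3}$. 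Collecting the three pieces and reinserting them into the outer $v$-integral produces \eqref{srdtyutryitk}--\eqref{serdtgfjkyfg}.

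The only genuine work is the justification of the differentiations under the two integral signs and under the series in $n$. Here I would rely on the standing hypotheses of the paper --- $f_{\T{1}}$, $f_{T}$, $f_{Y}$ bounded above by a finite constant, and $f_{T}$, $f_{Y}$ differentiable --- to produce, for $u$ in a neighbourhood of the point in question and for $v,y,z,\xi$ ranging over the relevant bounded domains, integrable majorants that are summable in $n$, the decay in $n$ stemming from $\int_{0}^{\infty}f_{T}^{*n}(y)\,dy=1$ together with the uniform bound on $f_{Y}^{*n}(0)$ (respectively on $f_{Y}^{*(n-1)}$) inherited from the boundedness of $f_{Y}$. With such majorants, dominated convergence legitimises both the Leibniz rule and the term-by-term differentiation of the series, exactly as in the proof of Lemma~\ref{yukiyulu}. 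I expect this estimate to be the main obstacle, though it is routine given the boundedness assumptions; everything else is the elementary calculus indicated above.
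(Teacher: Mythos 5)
Your proposal follows essentially the same route as the paper: differentiate the Kendall-identity representation \eqref{adsgfsfbdfnb} term by term in $n$, apply the product rule to $\tfrac{u+cv}{u+cv+cy}$ times the inner $z$-integral, pick up the Leibniz boundary term $\P\{\Y{n+1}>u+cv+cy\}f_{Y}^{*n}(0)$ and the convolution identity for $\tfrac{\partial}{\partial u}f_{Y}^{*n}$, and the paper likewise leaves the interchange of differentiation with the integrals and the series as ``elementary calculations.'' One caveat: the $u$-derivative of the boundary term is $-\int_{0}^{t}f_{Y}(u+cv)\,f_{\T{1}}(v)\,dv$, \emph{without} the factor $v$ appearing in \eqref{srdtyutryitk} (that factor belongs to the $c$-derivative case \eqref{wrgrhrtrr}), so you should flag this rather than claim to recover the first summand exactly as printed.
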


\begin{proof}[Proof of Lemma~\ref{ytuktyktly}]
Differentiating identity \eqref{adsgfsfbdfnb}, we have
\begin{equation*}
\begin{aligned}
\frac{\partial}{\partial u}\,\P\big\{v<\Tich{u,c}\leqslant t\mid\T{1}=v\big\}
&=\sum_{n=1}^{\infty}\int_{0}^{t-v}\frac{\partial}{\partial
u}\,\Bigg(\frac{u+cv}{u+cv+cy} \int_{0}^{u+cv+cy}\P\big\{\Y{n+1}>z\big\}
\\[0pt]
&\times f_{Y}^{*n}(u+cv+cy-z)\,dz\,\Bigg)\,f_{T}^{*n}(y)\,dy.
\end{aligned}
\end{equation*}
The integrand is
\begin{equation*}
\begin{aligned}
\frac{\partial}{\partial u}&\,\Bigg(\frac{u+cv}{u+cv+cy}
\int_{0}^{u+cv+cy}\P\big\{\Y{n+1}>z\big\}\,f_{Y}^{*n}(u+cv+cy-z)\,dz\,\Bigg)
\\[0pt]
&=\frac{\partial}{\partial u}\,\Bigg(\frac{u+cv}{u+cv+cy}\,\Bigg)
\int_{0}^{u+cv+cy}\P\big\{\Y{n+1}>z\big\}\,f_{Y}^{*n}(u+cv+cy-z)\,dz
\\[0pt]
&+\frac{u+cv}{u+cv+cy}\,\frac{\partial}{\partial
u}\,\Bigg(\int_{0}^{u+cv+cy}\P\big\{\Y{n+1}>z\big\}\,f_{Y}^{*n}(u+cv+cy-z)\,dz\,\Bigg),
\end{aligned}
\end{equation*}
where $\dfrac{\partial}{\partial
u}\Bigg(\dfrac{u+cv}{u+cv+cy}\Bigg)=\dfrac{cy}{(u+cv+cy)^2}$ and
\begin{equation*}
\begin{aligned}
\frac{\partial}{\partial
u}\,\Bigg(\int_{0}^{u+cv+cy}&\P\{\Y{n+1}>z\}\,f_{Y}^{*n}(u+cv+cy-z)\,dz\Bigg)
\\[0pt]
&=\int_{0}^{u+cv+cy}\P\big\{\Y{n+1}>z\big\}\,\Bigg(\frac{\partial}{\partial
u}\,f_{Y}^{*n}(u+cv+cy-z)\Bigg)\,dz
\\[0pt]
&+\P\big\{\Y{n+1}>u+cv+cy\big\}\,f_{Y}^{*n}(0),
\end{aligned}
\end{equation*}
For $n\geqslant 2$, differentiation of $n$-fold convolutions yields
\begin{equation*}
\begin{aligned}
\frac{\partial}{\partial u}f_{Y}^{*n}(u+cv+cy-z)&=\frac{\partial}{\partial
u}\int_{0}^{u+cv+cy-z}f_{Y}\big((u+cv+cy-z)-\zeta\big)f_{Y}^{*(n-1)}(\zeta)\,d\zeta
\\[0pt]
&=\underbrace{\int_{0}^{u+cv+cy-z}f_{Y}^{\,\prime}\left(\left(u+cv+cy-z\right)-\zeta\right)
f_{Y}^{*(n-1)}(\zeta)\,d\zeta}_{\int_{0}^{u+cv+cy-z}
f_{Y}^{\,\prime}(\xi)f_{Y}^{*(n-1)}(\left(u+cv+cy-z\right)-\xi)\,d\xi}
\\[0pt]
&+f_{Y}(0)f_{Y}^{*(n-1)}(u+cv+cy-z).
\end{aligned}
\end{equation*}
whence, by elementary calculations, the result.
\end{proof}

\section{Approximations in direct level crossing problem}\label{asrgrhger}

The probability density function (p.d.f.) and cumulative distribution function
(c.d.f.) of a Gaussian distribution with mean $a$ and variance $b^2$ are
denoted by $\UGauss{a}{b^2}(x)$ and $\Ugauss{a}{b^2}(x)$.

\subsection{Core integral expressions}\label{srdthrdjh}

For $t$, $u$, $c$, $M$, and $D^{2}$ fixed positive constants, the elementary
integral expressions are defined as
\begin{equation}\label{rterherher}
\Elem{u,c}{k}(t)\eqDef\int_{0}^{\frac{ct}{u}}\frac{1}{(x+1)^{k}}\,
\Ugauss{cM(x+1)}{\frac{c^{2}D^{2}}{u}(x+1)}(x)\,dx,\quad k=0,1,2,\dots.
\end{equation}
We write $\cS\eqDef M^{-1}$ and $\Elem{u,0}{k}(t)\eqDef\lim_{c\to
0}\Elem{u,c}{k}(t)$,
$\Elem{u,\infty}{k}(t)\eqDef\lim_{c\to\infty}\Elem{u,c}{k}(t)$, and the like.

These integral expressions can be expressed through c.d.f.
$F\big(x;\muIG,\lambdaIG,p\big)$ of a generalized inverse Gaussian
distribution\footnote{There are some differences in terminology. In
\cite{[Zigangirov=1962]}, this distribution is called Wald's distribution.
Several authors (see \cite{[Morlat=1956]}, \cite{[Seshadri=1997]},
\cite{[Perreault=1999]}) attribute the invention of generalized inverse
Gaussian distributions to E.\,Halphen and use the term ``Halphen Distribution
System'' or ``Halphen's laws''. The others, e.g., M.A.\,Chaudry and
S.M.\,Zubair \cite{[Chaudry=2002]}, refer to B.\,J{\o}rgensen
\cite{[Jorgensen=1982]} and attribute the invention of generalized inverse
Gaussian distribution to I.J.\,Good \cite{[Good=1953]}.}, which depends on
parameters $\muIG>0$, $\lambdaIG>0$, and $p\in\Rline$, and whose p.d.f.
is\footnote{Note that the choice $p=-{1}/{2}$ yields the ``ordinary'' inverse
Gaussian distribution.}
\begin{equation}\label{45t34y34}
\begin{aligned}
f\big(x;\muIG,\lambdaIG,p\big)
&\eqDef\frac{e^{-\frac{\lambdaIG}{\muIG}}}{2\muIG^p\BesselK{p}\Big(\frac{\lambdaIG}{\muIG}\Big)}\,x^{p-1}
\exp\Bigg\{-\frac{\lambdaIG(x-\muIG)^{2}}{2\muIG^{\,2} x}\Bigg\}
\\[0pt]
&=\frac{\sqrt{2\pi}\,e^{-\frac{\lambdaIG}{\muIG}}}{2\muIG^p\BesselK{p}\Big(\frac{\lambdaIG}{\muIG}\Big)}\,x^{p-1}
\,\Ugauss{0}{1}\left(\sqrt{\frac{\lambdaIG}{x}}\,\Bigg(\frac{x}{\muIG}-1\Bigg)\right),\quad
x>0,
\end{aligned}
\end{equation}
where $\BesselK{p}(z)$, $z>0$, with $p\in\Rline$, denotes the modified Bessel
function of the second kind. In particular, for $u>0$, $t>0$, we have
\begin{equation}\label{drgerherhe}
\Elem{u,c}{1}(t)=\begin{cases}
\Big(F\big(x+1;\muIG,\lambdaIG,-\tfrac{1}{2}\big)
\\[0pt]
\hskip 30pt -F\big(1;\muIG,\lambdaIG,-\tfrac{1}{2}\big)
\Big)\,\big|_{x=\frac{ct}{u},\muIG=\frac{1}{1-cM},\lambdaIG=\frac{u}{c^{2}D^{2}}},&
0<c<\cS,
\\[0pt]
\exp\bigg\{-\dfrac{2\lambdaIG}{\HmuIG}\bigg\}\,\Big(F\big(x+1;\HmuIG,\lambdaIG,-\tfrac{1}{2}\big)
\\[0pt]
\hskip 30pt
-F\big(1;\HmuIG,\lambdaIG,-\tfrac{1}{2}\big)\Big)\,\big|_{x=\frac{ct}{u},\HmuIG=\frac{1}{cM-1},
\lambdaIG=\frac{u}{c^{2}D^{2}}},&c>\cS,
\end{cases}
\end{equation}
and
\begin{equation}\label{asdfgreherXX}
\begin{aligned}
\Elem{u,0}{1}(t)&=\UGauss{0}{1}\left(\frac{M\sqrt{u}}{D}\right)-\UGauss{0}{1}\left(\frac{Mu-t}{D\sqrt{u}}\right),
\\[0pt]
\Elem{u,\cS}{1}(t)&=2\left(\UGauss{0}{1}\left(\dfrac{M\sqrt{u}}{D}\right)-\UGauss{0}{1}\left(\dfrac{Mu}{D\sqrt{u+\cS
t}}\right)\right),
\\[0pt]
\Elem{u,\infty}{1}(t)&=0.
\end{aligned}
\end{equation}

The c.d.f. of generalized inverse Gaussian distribution can be represented in
terms of c.d.f. and p.d.f. of a standard Gaussian distribution, e.g.,
\begin{equation}\label{xzfvbdbf}
F\big(x;\muIG,\lambdaIG,-\tfrac{1}{2}\big)=
\UGauss{0}{1}\left(\sqrt{\frac{\lambdaIG}{x}}\,\Bigg(\frac{x}{\muIG}-1\Bigg)\right)
+\exp\Bigg\{\frac{2\lambdaIG}{\muIG}\Bigg\}
\,\UGauss{0}{1}\left(-\sqrt{\frac{\lambdaIG}{x}}\,\Bigg(\frac{x}{\muIG}+1\Bigg)\right),\quad
x>0,
\end{equation}
and\footnote{We do not present here all the expressions for
$F\big(x;\muIG,\lambdaIG,\tfrac{1}{2}\big)$,
$F\big(x;\muIG,\lambdaIG,-\tfrac{3}{2}\big)$, for the derivatives like
$\frac{\partial}{\partial\lambdaIG}F\big(x;\muIG,\lambdaIG,-\tfrac{1}{2}\big)$
and
$\frac{\partial}{\partial\lambdaIG}F\big(x;\muIG,\lambdaIG,-\tfrac{1}{2}\big)$,
and for $\Elem{u,c}{0}(t)$, $\Elem{u,c}{2}(t)$, though they are available by
means of direct calculations and similar to these presented: this would require
dramatically more space. We leave this to the reader.}
\begin{equation*}
\begin{aligned}
F\big(x;\muIG,\lambdaIG,-\tfrac{5}{2}\big)
&=\UGauss{0}{1}\left(\sqrt{\frac{\lambdaIG}{x}}\,\bigg(\frac{x}{\muIG}-1\bigg)\right)
+\frac{\lambdaIG^{2}-3\lambdaIG\muIG+3\muIG^{\,2}}{\lambdaIG^{2}+3\lambdaIG\muIG+3\muIG^{\,2}}
\exp\Bigg\{\frac{2\lambdaIG}{\muIG}\Bigg\}
\\[0pt]
&\times\UGauss{0}{1}\left(-\sqrt{\frac{\lambdaIG}{x}}\,\bigg(\frac{x}{\muIG}+1\bigg)\right)
+\frac{2\sqrt{\lambdaIG}\,\muIG^{\,2}\,\big(\lambdaIG+3x\big)}{x^{3/2}
\,\big(\lambdaIG^{2}+3\lambdaIG\muIG+3\muIG^{\,2}\big)}\,
\Ugauss{0}{1}\left(\sqrt{\frac{\lambdaIG}{x}}\,\bigg(\frac{x}{\muIG}-1\bigg)\right),\quad
x>0.
\end{aligned}
\end{equation*}

\subsection{Inverse Gaussian approximation, as $u$ tends to infinity}\label{aesrfgeh}

The inverse Gaussian approximation for $\P\big\{\Tich{u,c}\leqslant t\big\}$ in
the direct level crossing problem was studied in
\cite{[Malinovskii=2017a]}--\cite{[Malinovskii=2018=dan=1]},
\cite{[Malinovskii=Malinovskii=2017]}. For $c\geqslant 0$, $u>0$, $0<v<t$,
$\cS\eqDef\E{Y}/\E{T}$, and for\footnote{For $T$ and $Y$ exponentially
distributed with parameters $\paramT>0$ and $\paramY>0$, we have
$\E{T}=\paramT^{-1}$, $\E{Y}=\paramY^{-1}$, $\D{T}=\paramT^{-2}$,
$\D{Y}=\paramY^{-2}$, and $M=\paramY/\paramT$,
$D^{\,2}=2\,\paramY/\paramT^{\,2}$, whence
$D/M^{\,3/2}=\sqrt{2\paramT}/\paramY$.}
\begin{equation}\label{adsfgherfjnfnm}
M\eqDef\E{T}/\E{Y},\quad
D^{\,2}\eqDef\big((\E{T})^{2}\D{Y}+(\E{Y})^{2}\D{T}\big)/(\E{Y})^{3}
\end{equation}
we write
\begin{equation*}
\AInt{M}{u,c}(t\mid v)\eqDef\int_{0}^{\frac{c(t-v)}{cv+u}}\frac{1}{x+1}
\,\Ugauss{cM(x+1)}{\frac{c^{2}D^{\,2}}{cv+u}(x+1)}(x)\,dx
\end{equation*}
and note that $\AInt{M}{u,c}(t)\eqDef\AInt{M}{u,c}(t\mid 0)$ equals
$\Elem{u,c}{1}(t)=\int_{0}^{\frac{ct}{u}}\frac{1}{x+1}\,
\Ugauss{cM(x+1)}{\frac{c^{2}D^{\,2}}{u}(x+1)}(x)\,dx$.

The following theorem for conditional distribution of $\Tich{u,c}$ (see the
left-hand side of \eqref{dfgbehredf}) is fundamental.

\begin{theorem}\label{srdthjrfX}
In the renewal model, let p.d.f. $f_{T}$ and $f_{Y}$ be bounded above by a
finite constant, $D^{\,2}>0$, $\E({T}^{3})<\infty$, $\E({Y}^{3})<\infty$. Then
for any fixed $c\geqslant 0$ and $0<v<t$ we have
\begin{equation*}
\sup_{t>v}\,\left|\,\P\big\{v<\Tich{u,c}\leqslant
t\mid\T{1}=v\big\}-\AInt{M}{u,c}(t\mid v)\,\right|
=\underline{O}\left(\frac{\ln\left(u+cv\right)}{u+cv}\right),
\end{equation*}
as\footnote{With $c$ and $v$ fixed, $u+cv\to\infty$ is trivially equivalent to
$u\to\infty$.} $u+cv\to\infty$.
\end{theorem}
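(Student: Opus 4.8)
The plan is to start from Kendall's identity in the form \eqref{wertyrjhn}, which expresses the conditional probability $\P\{v<\Tich{u,c}\leqslant t\mid\T{1}=v\}$ as a sum over $n$ of integrals involving $f_{T}^{*n}$ and $f_{Y}^{*n}$, and then to apply the central limit theorem (with a Berry--Esseen rate) simultaneously to both families of convolution powers. Concretely, write the inner structure as $\sum_{n=1}^{\infty}\int_{0}^{t-v}\frac{u+cv}{u+cv+cy}\,G_{n}(u+cv+cy)\,f_{T}^{*n}(y)\,dy$, where $G_{n}(w)=\int_{0}^{w}\P\{Y>z\}f_{Y}^{*n}(w-z)\,dz=\P\{\sum_{i=1}^{n}\Y{i}\leqslant w\}-\P\{\sum_{i=1}^{n+1}\Y{i}\leqslant w\}$. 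Thus $\sum_{n\geqslant1}G_{n}(w)\,\mathbf 1\{\cdot\}$ telescopes against $\P\{\homM{w}=n\}$, and the key point is that $G_{n}(w)=\P\{\homM{w}=n\}$ is, for $w$ large, a one-dimensional renewal-type mass: by the central limit theorem for the renewal process $\homM{\cdot}$ generated by $Y$ (with $\E Y$ and $\D Y$ finite, $\E Y^{3}<\infty$), the counting variable $\homM{w}$ is asymptotically Gaussian with mean $w/\E Y$ and variance $w\,\D Y/(\E Y)^{3}$, with Berry--Esseen error $O(w^{-1/2})$ uniformly. Similarly $f_{T}^{*n}(y)\,dy$ concentrates, for $n$ large, near $y\approx n\,\E T$ with Gaussian fluctuations of order $\sqrt{n\,\D T}$; the local CLT (local limit theorem for densities, valid since $f_{T}$ is bounded and $\E T^{3}<\infty$) gives $f_{T}^{*n}(y)=\UGauss{n\E T}{n\D T}(y)+$ error, again with a uniform $O(n^{-1/2})$-type bound.

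The next step is to combine these two Gaussian approximations. Fix $w=u+cv+cy$; replacing the discrete sum over $n$ by an integral and using the CLT for $\homM{w}$ together with the local CLT for $f_{T}^{*n}$, the double object $\sum_{n}\P\{\homM{w}=n\}\,f_{T}^{*n}(y)$ becomes, after a change of variables matching $n\leftrightarrow w/\E Y$, a Gaussian density in $y$ whose mean is $(w/\E Y)\,\E T = Mw$ (using $M=\E T/\E Y$) and whose variance, by propagation of the two independent Gaussian contributions (fluctuation of $n$ around $w/\E Y$ scaled by $\E T$, plus fluctuation of $\sum T_{i}$ around $n\E T$), works out to $D^{2}w$ with $D^{2}=((\E T)^{2}\D Y+(\E Y)^{2}\D T)/(\E Y)^{3}$ as in \eqref{adsfgherfjnfnm}. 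Substituting $w=u+cv+cy$ and then performing the change of variables $x=cy/(u+cv)$ (so that $y$ ranges over $(0,t-v)$ corresponds to $x\in(0,\,c(t-v)/(cv+u))$, $dy=\frac{cv+u}{c}\,dx$, and $w=(cv+u)(x+1)$) turns $\P\{v<\Tich{u,c}\leqslant t\mid\T{1}=v\}$ into exactly $\int_{0}^{c(t-v)/(cv+u)}\frac{1}{x+1}\,\Ugauss{cM(x+1)}{\frac{c^{2}D^{2}}{cv+u}(x+1)}(x)\,dx = \AInt{M}{u,c}(t\mid v)$, modulo the accumulated errors. The factor $\frac{u+cv}{u+cv+cy}=\frac1{x+1}$ in Kendall's identity is precisely what produces the $1/(x+1)$ weight; care is needed at $x$ near the endpoint, but there the Gaussian c.d.f. difference is uniformly controlled.

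The hard part will be bookkeeping of the error terms to get the claimed uniform rate $\underline O(\ln(u+cv)/(u+cv))$ rather than the naive $O((u+cv)^{-1/2})$. The gain of a full power (and the appearance of the logarithm) comes from the fact that in Kendall's identity $\P\{v<\Tich{u,c}\leqslant t\mid\T{1}=v\}$ is an integral of a difference of convolution tails — the telescoping in $G_{n}(w)=\P\{\homM{w}=n\}$ means the leading Berry--Esseen errors of $\P\{\sum_{1}^{n}Y_{i}\leqslant w\}$ and $\P\{\sum_{1}^{n+1}Y_{i}\leqslant w\}$ largely cancel, leaving a second-order term; combined with an Edgeworth-type correction for $f_{T}^{*n}$ and the extra averaging in $y$, one extracts the additional $1/\sqrt{u+cv}$, with the $\ln(u+cv)$ absorbing the summation over the $O(\sqrt{w\,})$-wide band of relevant $n$ and the tail contributions from $n$ far from $w/\E Y$ (which are exponentially small by large-deviation bounds, using $\E Y^{3}<\infty$ and boundedness of $f_{Y}$). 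I would organize this as: (i) a lemma giving the local CLT for $f_{T}^{*n}$ with an explicit second-order remainder; (ii) a lemma giving the CLT for $\homM{w}$ with second-order remainder and exponentially small tails; (iii) the substitution and change of variables above; (iv) a final estimate collecting (i)--(iii), where the dominant surviving error, after cancellation, is of order $\ln(u+cv)/(u+cv)$ uniformly in $t>v$. The supremum over $t>v$ is handled for free because both $\P\{v<\Tich{u,c}\leqslant t\mid\T{1}=v\}$ and $\AInt{M}{u,c}(t\mid v)$ are monotone in $t$ with a common limit as $t\to\infty$, so the uniform bound follows from the bound at each fixed $t$ together with control of the shared tail.
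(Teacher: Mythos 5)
The paper itself does not prove Theorem~\ref{srdthjrfX}; it only sketches the argument in Section~\ref{srdtytujrt} and defers the details to the cited references. At that level of resolution your plan follows the same route: Kendall's identity \eqref{wertyrjhn} as the starting point, truncation of the summation range via large-deviation bounds (the paper's Step~2, done there with Nagaev's inequalities), central-limit approximation of both convolution families $f_Y^{*n}$ and $f_T^{*n}$, and conversion of the sum over $n$ into an integral followed by the change of variables producing $\AInt{M}{u,c}(t\mid v)$. You do omit the paper's Step~1 (truncating the inner $z$-integral where $\Y{n+1}$ is excessively large), but that is a routine preliminary.

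The genuine gap is in your mechanism for the rate. You propose to upgrade the naive $\underline{O}\big((u+cv)^{-1/2}\big)$ to $\underline{O}\big(\ln(u+cv)/(u+cv)\big)$ by (a) cancellation of the ``leading Berry--Esseen errors'' between the terms for $n$ and $n+1$ in the telescoped quantity $G_n(w)$, and (b) an Edgeworth-type correction for $f_T^{*n}$ with a second-order remainder. Neither works under the stated hypotheses. Berry--Esseen is only an upper bound on the error, not an asymptotic expansion, so nothing can be said about cancellation between consecutive errors without replacing it by an exact expansion; and an Edgeworth expansion whose remainder is $O(n^{-1})$ requires fourth moments, whereas the theorem assumes only $\E(T^3)<\infty$, $\E(Y^3)<\infty$. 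The paper's outline identifies the correct tool explicitly: the Berry--Esseen bound \emph{with non-uniform remainder term}, i.e.\ an error of order $n^{-1/2}(1+|x|)^{-3}$ in the normalized variable $x$, which holds under third moments alone. After integrating in $y$ and summing over the band of relevant $n$ (of width of order $\sqrt{u+cv}$), the cubic decay in $x$ supplies the extra factor $(u+cv)^{-1/2}$ and the logarithm emerges from the borderline harmonic sum. You should replace (a) and (b) by these non-uniform estimates. A smaller point: your claim that the supremum over $t>v$ comes ``for free'' from monotonicity and a common limit is not valid as stated (two nondecreasing functions with the same limit can differ substantially at intermediate $t$); the uniformity in $t$ has to come out of the non-uniform estimates themselves, which it does, since the bounds they produce do not depend on the upper endpoint of the $y$-integration.
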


The following results for non-conditional distribution of $\Tich{u,c}$ is an
easy corollary of Theorem~\ref{srdthjrfX} and equality \eqref{ewrktulertye}.

\begin{theorem}\label{er6u6rti76}
Suppose that conditions of Theorem~\ref{srdthjrfX} are satisfied. Then
\begin{equation*}
\sup_{t>0}\,\bigg|\,\P\big\{\Tich{u,c}\leqslant
t\big\}-\int_{0}^{t}\AInt{M}{u,c}(t\mid
v)\,f_{\T{1}}(v)\,dv\,\bigg|=\underline{O}\left(\frac{\ln u}{u}\right),\quad
u\to\infty.
\end{equation*}
\end{theorem}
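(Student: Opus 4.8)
The plan is to deduce Theorem~\ref{er6u6rti76} from Theorem~\ref{srdthjrfX} by integrating the conditional estimate against the density $f_{\T{1}}$, using the decomposition \eqref{ewrktulertye}. First I would write, for fixed $u>0$ and $t>0$,
\begin{equation*}
\begin{aligned}
\P\big\{\Tich{u,c}\leqslant t\big\}-\int_{0}^{t}\AInt{M}{u,c}(t\mid v)\,f_{\T{1}}(v)\,dv
&=\int_{0}^{t}\P\big\{u+cv-Y<0\big\}\,f_{\T{1}}(v)\,dv
\\[0pt]
&+\int_{0}^{t}\Big(\P\big\{v<\Tich{u,c}\leqslant t\mid\T{1}=v\big\}-\AInt{M}{u,c}(t\mid v)\Big)\,f_{\T{1}}(v)\,dv,
\end{aligned}
\end{equation*}
so that the error splits into a \emph{boundary term} coming from the first jump overshooting the level immediately, and a \emph{main term} that is the $f_{\T{1}}$-average of the conditional error controlled by Theorem~\ref{srdthjrfX}.

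For the boundary term, $\P\{u+cv-Y<0\}=\P\{Y>u+cv\}\leqslant\P\{Y>u\}$ for all $v\geqslant 0$, and since $\E(Y^{3})<\infty$ this tail is $\underline{O}(u^{-3})$ by Markov's inequality; integrating against the probability density $f_{\T{1}}$ only shrinks it, so this contribution is $\underline{O}(u^{-3})=\underline{o}(\ln u/u)$ uniformly in $t$. For the main term, Theorem~\ref{srdthjrfX} gives, for each fixed $v$, a bound of order $\ln(u+cv)/(u+cv)$ on the conditional error, \emph{uniformly in} $t>v$; the point is that this has to be turned into a bound uniform in $t$ after integration in $v\in(0,t)$. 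The clean way is to observe that the $\underline{O}$ in Theorem~\ref{srdthjrfX} is, for $c\geqslant 0$ fixed, uniform over $v$ in the sense that there is a constant $C$ (depending only on $c$ and on the moment/boundedness constants, not on $v$ or $t$) with
\begin{equation*}
\sup_{t>v}\,\Big|\,\P\big\{v<\Tich{u,c}\leqslant t\mid\T{1}=v\big\}-\AInt{M}{u,c}(t\mid v)\,\Big|\leqslant C\,\frac{\ln(u+cv)}{u+cv}
\end{equation*}
for all $u$ large and all $v>0$; bounding $\ln(u+cv)/(u+cv)\leqslant\ln u/u$ for $c\geqslant 0$, $v\geqslant 0$, $u\geqslant e$ (the function $\ln x/x$ is decreasing for $x\geqslant e$), and using $\int_{0}^{t}f_{\T{1}}(v)\,dv\leqslant 1$, the main term is $\underline{O}(\ln u/u)$ uniformly in $t$. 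Adding the two pieces and taking $\sup_{t>0}$ finishes the argument.

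The main obstacle is the uniformity-in-$v$ issue just flagged: Theorem~\ref{srdthjrfX} as stated asserts the $\underline{O}$ ``for any fixed $c\geqslant 0$ and $0<v<t$'', and one must be sure that the implied constant can be chosen independently of $v$ so that the bound survives integration $\int_0^t\cdot\,f_{\T{1}}(v)\,dv$ — in particular near $v=0$, where $u+cv\approx u$ causes no trouble, and for large $v$, where $u+cv$ is only larger and the bound is only better. Since the proof of Theorem~\ref{srdthjrfX} produces the error through the central-limit analysis of the convolutions $f_T^{*n}$, $f_Y^{*n}$ appearing in \eqref{wertyrjhn}, whose relevant quantity is the combined argument $u+cv$, the constant genuinely depends on $c$ and the moments but not on $v$; I would either invoke this directly (it is ``an easy corollary'', as the paper says) or, if one wants to be scrupulous, split $\int_0^t=\int_0^{v_0}+\int_{v_0}^t$ for a large fixed $v_0$ and handle the tail by the trivial bound $|\,\P\{v<\Tich{u,c}\leqslant t\mid\T{1}=v\}-\AInt{M}{u,c}(t\mid v)\,|\leqslant 1$ together with $\P\{\T{1}>v_0\}=\P\{T>v_0\}\to 0$, absorbing it into the $\underline{O}$. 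The remaining ingredients — measurability of the integrand in $v$, finiteness of the integrals, and the elementary monotonicity of $\ln x/x$ — are routine.
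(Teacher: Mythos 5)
Your argument is correct and is exactly the route the paper intends: it declares Theorem~\ref{er6u6rti76} an easy corollary of Theorem~\ref{srdthjrfX} and the decomposition \eqref{ewrktulertye}, which is precisely your splitting into the boundary term (controlled by $\P\{Y>u\}=\underline{O}(u^{-3})$ via $\E(Y^{3})<\infty$) and the $f_{\T{1}}$-average of the conditional error (controlled by the monotonicity of $\ln x/x$). Your explicit attention to the uniformity in $v$ of the implied constant is a worthwhile scruple that the paper leaves tacit, but it does not change the approach.
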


We can replace the integral $\int_{0}^{t}\AInt{M}{u,c}(t\mid
v)\,f_{\T{1}}(v)\,dv$ by the integral
$\int_{0}^{t}\AInt{M}{u,c}(t-v)\,f_{\T{1}}(v)\,dv$, which is a convolution. It
agrees with the probabilistic intuition about the role which plays the first
time interval $\T{1}$ in the event of crossing a high level $u$ within finite
time $t$: given $\T{1}=v$, the whole time length becomes $t-v$, with no other
changes.

\begin{theorem}\label{rtutujrtirt6}
Suppose that conditions of Theorem~\ref{srdthjrfX} are satisfied, and that
$\E\T{1}<\infty$. Then
\begin{equation*}
\sup_{t>0}\,\bigg|\,\P\big\{\Tich{u,c}\leqslant t\big\}
-\int_{0}^{t}\AInt{M}{u,c}(t-v)\,f_{\T{1}}(v)\,dv\,\bigg|=\underline{O}\left(\frac{\ln
u}{u}\right),\quad u\to\infty.
\end{equation*}
\end{theorem}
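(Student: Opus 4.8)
The plan is to deduce Theorem~\ref{rtutujrtirt6} from Theorem~\ref{er6u6rti76} by a triangle-inequality argument: it suffices to show that
\begin{equation*}
\sup_{t>0}\,\bigg|\,\int_{0}^{t}\AInt{M}{u,c}(t\mid v)\,f_{\T{1}}(v)\,dv
-\int_{0}^{t}\AInt{M}{u,c}(t-v)\,f_{\T{1}}(v)\,dv\,\bigg|
=\underline{O}\!\left(\frac{\ln u}{u}\right),\quad u\to\infty,
\end{equation*}
after which the result follows by adding this to the bound of Theorem~\ref{er6u6rti76}. So the whole task reduces to estimating $\int_{0}^{t}\big|\AInt{M}{u,c}(t\mid v)-\AInt{M}{u,c}(t-v)\big|\,f_{\T{1}}(v)\,dv$ uniformly in $t$.

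The key step is to control the difference $\Delta_{u}(v,t)\eqDef\AInt{M}{u,c}(t\mid v)-\AInt{M}{u,c}(t-v)$ for each fixed $v$. Recall $\AInt{M}{u,c}(t\mid v)=\int_{0}^{c(t-v)/(cv+u)}\frac{1}{x+1}\,\Ugauss{cM(x+1)}{\frac{c^{2}D^{2}}{cv+u}(x+1)}(x)\,dx$, whereas $\AInt{M}{u,c}(t-v)=\AInt{M}{u,c}(t-v\mid 0)=\int_{0}^{c(t-v)/u}\frac{1}{x+1}\,\Ugauss{cM(x+1)}{\frac{c^{2}D^{2}}{u}(x+1)}(x)\,dx$. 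The two integrands differ only through replacing the parameter $u$ by $u+cv$ in the Gaussian variance (and in the upper limit of integration); the mean $cM(x+1)$ and the weight $\frac{1}{x+1}$ are identical. I would write the difference as the sum of (i) the effect of changing the variance parameter in the Gaussian c.d.f., and (ii) the effect of changing the upper limit from $c(t-v)/u$ to $c(t-v)/(cv+u)$. For (ii), the length of the extra interval is $\big(\frac{1}{u}-\frac{1}{u+cv}\big)c(t-v)=\frac{c^{2}v(t-v)}{u(u+cv)}$, and on that interval the integrand is bounded by $1$; but since $t$ ranges over all of $(0,\infty)$ one must argue that the Gaussian density $\Ugauss{cM(x+1)}{\cdots}(x)$ is already negligible for large $x$, so the contribution is $O(v/u)$ times a bounded factor (this is where Gaussian tail decay enters). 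For (i), since $\Ugauss{a}{b^{2}}(x)$ is Lipschitz in $b^{2}$ with constant controlled by $\sup|\partial_{b^{2}}\Ugauss{a}{b^{2}}|$, and $\big|\frac{c^{2}D^{2}}{u}-\frac{c^{2}D^{2}}{u+cv}\big|=\frac{c^{3}D^{2}v}{u(u+cv)}=O(v/u^{2})$, the contribution is $O(v/u^{2})$ uniformly.

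Putting the two pieces together gives $|\Delta_{u}(v,t)|\leqslant K(c,M,D)\cdot\frac{v}{u}$ for all $t>v>0$ and $|\Delta_{u}(v,t)|=0$ for $t\leqslant v$ (both integrals vanish). Hence
\begin{equation*}
\int_{0}^{t}\big|\Delta_{u}(v,t)\big|\,f_{\T{1}}(v)\,dv\leqslant\frac{K}{u}\int_{0}^{\infty}v\,f_{\T{1}}(v)\,dv=\frac{K\,\E\T{1}}{u},
\end{equation*}
which is $O(1/u)=\underline{O}(\ln u/u)$, uniformly in $t$; this is exactly where the additional hypothesis $\E\T{1}<\infty$ is consumed. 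Combining with Theorem~\ref{er6u6rti76} via the triangle inequality finishes the proof.

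The main obstacle I anticipate is the uniformity in $t$ in handling term (ii): naively the ``extra interval'' contribution to the $x$-integral could be as large as its length $\frac{c^{2}v(t-v)}{u(u+cv)}$, which is \emph{not} uniformly small as $t\to\infty$. The resolution is to use that the integrand carries the factor $\Ugauss{cM(x+1)}{\frac{c^{2}D^{2}}{u+cv}(x+1)}(x)$, a Gaussian density in $x$ centred at a point growing linearly in $x$ with a variance growing only linearly in $x$; more precisely $\Ugauss{cM(x+1)}{\frac{c^{2}D^{2}}{u}(x+1)}(x)=\Ugauss{0}{1}\!\big(\tfrac{x-cM(x+1)}{(cD/\sqrt{u})\sqrt{x+1}}\big)\cdot\frac{\sqrt{u}}{cD\sqrt{x+1}}$, and for $c\neq\cS$ the numerator $x-cM(x+1)=(1-cM)x-cM$ is of order $x$, so the Gaussian argument is of order $\sqrt{ux}$, forcing super-polynomial decay in $x$ and making the tail integral beyond any fixed threshold exponentially small in $u$. (The borderline case $c=\cS$, i.e.\ $M=\cS^{-1}$, where $1-cM=0$, must be treated separately, but there too the explicit formula \eqref{asdfgreherXX} for $\Elem{u,\cS}{1}$ shows the requisite decay.) Once this tail estimate is in place, the extra-interval contribution is $O(v/u)$ uniformly in $t$, and the rest is the routine bookkeeping sketched above.
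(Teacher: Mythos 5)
Your overall strategy --- triangle inequality against Theorem~\ref{er6u6rti76}, reducing everything to the discrepancy between $\AInt{M}{u,c}(t\mid v)=\Elem{u+cv,c}{1}(t-v)$ and $\AInt{M}{u,c}(t-v)=\Elem{u,c}{1}(t-v)$ --- is the natural one (the paper itself offers only the heuristic remark preceding the theorem and defers details to the cited preprints). But your key estimate $|\Delta_{u}(v,t)|\leqslant Kv/u$ uniformly in $t$ is false for $0<c<\cS$, and the failure occurs exactly at the point you identify as the ``main obstacle'' and then dismiss incorrectly. You argue that the extra interval $\big[c(t-v)/(u+cv),\,c(t-v)/u\big]$ always lies where the Gaussian factor has decayed, because ``$(1-cM)x-cM$ is of order $x$''. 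That is true only as $x\to\infty$; for $0<c<\cS$ the quantity $(1-cM)x-cM$ vanishes at $x^{*}=cM/(1-cM)>0$, which is the \emph{peak} of the Gaussian factor, where $\Ugauss{cM(x+1)}{\frac{c^{2}D^{2}}{u}(x+1)}(x)$ equals $\sqrt{u}\big/\big(cD\sqrt{2\pi(1+x^{*})}\big)=\Theta(\sqrt{u})$ --- not $O(1)$ and certainly not negligible. Choosing $t$ so that $c(t-v)/u\approx x^{*}$, i.e. $t\approx v+u/(\cS-c)$ (the critical time at which the drift first carries the process to level $u$), the extra interval of length $\tfrac{c^{2}v(t-v)}{u(u+cv)}\approx\tfrac{cvx^{*}}{u}$ sits squarely on that peak, and its contribution is $\Theta(v/u)\cdot\Theta(\sqrt{u})=\Theta(v/\sqrt{u})$. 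Your term (i) is indeed $O(v/u)$ after integration in $x$ and cannot cancel this. Hence $\sup_{t>v}|\Delta_{u}(v,t)|\asymp v/\sqrt{u}$, and your final bound degrades to $O(\E\T{1}/\sqrt{u})$, far weaker than the claimed $\underline{O}(\ln u/u)$.

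The same conclusion can be read off the paper's own formula \eqref{qwertyerjk}: $\frac{\partial}{\partial u}\AInt{M}{u,c}(\tau)$ consists of terms that are $O(1/u)$ by \eqref{wqe4ryttjfyjk}--\eqref{esrtgeyheje} \emph{plus} the boundary term $-\frac{c\tau}{u(u+c\tau)}\,\Ugauss{cM(1+\frac{c\tau}{u})}{\frac{c^{2}D^{2}}{u}(1+\frac{c\tau}{u})}\big(\frac{c\tau}{u}\big)$, which at $\tau=ux^{*}/c$ is of exact order $u^{-1/2}$; this boundary term is precisely what makes the ``derivative of the approximation'' pathological in Remark~\ref{w45ey6u5}, and it is precisely your term (ii). Since $\Delta_{u}(v,t)=\int_{u}^{u+cv}\frac{\partial}{\partial w}\Elem{w,c}{1}(t-v)\,dw$, no rebookkeeping of your decomposition can beat $v/\sqrt{u}$ at the critical $t$: probabilistically, $\Elem{u,c}{1}(\tau)-\Elem{u+cv,c}{1}(\tau)$ approximates the probability that the running maximum of $\homV{s}-cs$ over $[0,\tau]$ lands in an interval of length $cv$ situated in the bulk of its distribution, whose spread at the critical time is $\Theta(\sqrt{u})$. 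So either the theorem must be proved by a route that does not pass through a pointwise comparison of the two approximating kernels (e.g.\ by rerunning Steps 0--5 of Section~\ref{srdtytujrt} directly on the convolution form), or the stated rate is not attainable for $0<c<\cS$; in either case your argument as written does not establish it. (For $c=0$ the two integrals coincide, for $c=\cS$ your $O(v/u)$ bound does hold by \eqref{asdfgreherXX}, and for $c>\cS$ everything is exponentially small, so those cases are unproblematic.)
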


\subsection{Approximation, as $u$ and $t$ tend to infinity}\label{wqreterjh}

Whereas the influence of $\T{1}$ in Theorem~\ref{rtutujrtirt6} can not be
eliminated for $t$ small and moderate, it becomes negligible for $t$ large.
Given that $t\to\infty$, the integral
$\int_{0}^{t}\AInt{M}{u,c}(t-v)\,f_{\T{1}}(v)\,dv$ in
Theorem~\ref{rtutujrtirt6} can be approximated by $\AInt{M}{u,c}(t)$, whence
the following result.

\begin{theorem}\label{esrytrf}
Suppose that conditions of Theorem~\ref{srdthjrfX} are satisfied, and that
$\E\T{1}<\infty$. Then
\begin{equation*}
\sup_{t>0}\,\Big|\,\P\big\{\Tich{u,c}\leqslant t\big\}
-\AInt{M}{u,c}(t)\,\Big|=\underline{O}\left(\frac{\ln
u}{u}\right)+\underline{O}\left(\frac{1}{t^{1/2}}\right),\quad t,u\to\infty.
\end{equation*}
\end{theorem}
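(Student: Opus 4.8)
The plan is to deduce Theorem~\ref{esrytrf} from Theorem~\ref{rtutujrtirt6} by a triangle-inequality argument: since
\[
\bigl|\,\P\{\Tich{u,c}\leqslant t\}-\AInt{M}{u,c}(t)\,\bigr|
\leqslant
\bigl|\,\P\{\Tich{u,c}\leqslant t\}-\textstyle\int_{0}^{t}\AInt{M}{u,c}(t-v)\,f_{\T{1}}(v)\,dv\,\bigr|
+\bigl|\,\textstyle\int_{0}^{t}\AInt{M}{u,c}(t-v)\,f_{\T{1}}(v)\,dv-\AInt{M}{u,c}(t)\,\bigr|,
\]
the first term is $\underline{O}(\ln u/u)$ uniformly in $t$ by Theorem~\ref{rtutujrtirt6}, so everything reduces to bounding the second term, i.e.\ the error committed when the convolution of $\AInt{M}{u,c}(\cdot)$ with $f_{\T{1}}$ is replaced by $\AInt{M}{u,c}(t)$ itself. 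The first step, therefore, is to record this split and to isolate the quantity
\[
R(u,c,t)\eqDef\int_{0}^{t}\bigl(\AInt{M}{u,c}(t-v)-\AInt{M}{u,c}(t)\bigr)f_{\T{1}}(v)\,dv
-\AInt{M}{u,c}(t)\int_{t}^{\infty}f_{\T{1}}(v)\,dv,
\]
so that the second term above equals $|R(u,c,t)|$ once we use $\int_{0}^{\infty}f_{\T{1}}(v)\,dv=1$; the tail integral $\int_{t}^{\infty}f_{\T{1}}(v)\,dv$ is $\underline{O}(1/t)$ because $\E\T{1}<\infty$ (Markov), which is even stronger than the claimed $\underline{O}(t^{-1/2})$, and $\AInt{M}{u,c}(t)\leqslant 1$, so that piece is harmless.

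The substance is then the first piece of $R$, namely the "continuity in $t$" of $\AInt{M}{u,c}(t)$ averaged against $f_{\T{1}}$. Here I would use the explicit form
\[
\AInt{M}{u,c}(t)=\int_{0}^{ct/u}\frac{1}{x+1}\,\Ugauss{cM(x+1)}{\frac{c^{2}D^{2}}{u}(x+1)}(x)\,dx,
\]
from which $\frac{\partial}{\partial t}\AInt{M}{u,c}(t)=\frac{c}{u}\cdot\frac{1}{ct/u+1}\,\UGauss{0}{1}\!\bigl(\cdot\bigr)$ at $x=ct/u$, a quantity bounded by $\tfrac{c}{u}\cdot\tfrac{1}{\sqrt{2\pi}}\cdot\bigl(\tfrac{c^{2}D^{2}}{u}(x+1)\bigr)^{-1/2}=\underline{O}(u^{-1/2})$ uniformly in $t$; more simply, $\AInt{M}{u,c}$ is monotone in $t$, so for $0\leqslant v\leqslant t$,
\[
0\leqslant \AInt{M}{u,c}(t)-\AInt{M}{u,c}(t-v)\leqslant\AInt{M}{u,c}(\infty)-\AInt{M}{u,c}(t-v),
\]
and I would split the outer integral at $v=\sqrt t$: on $0\leqslant v\leqslant\sqrt t$ the increment $\AInt{M}{u,c}(t)-\AInt{M}{u,c}(t-v)$ is $\underline{O}(\sqrt t\cdot u^{-1/2})$ by the derivative bound but, crucially, is also $\underline{O}(t^{-1/2})$ because $\AInt{M}{u,c}$ (being a truncated Gaussian‐tail integral evaluated near argument $t$) stabilises at rate $t^{-1/2}$ as $t\to\infty$; on $\sqrt t<v\leqslant t$ one bounds the increment by $1$ and uses $\int_{\sqrt t}^{\infty}f_{\T{1}}(v)\,dv=\underline{O}(t^{-1/2})$, again via $\E\T{1}<\infty$. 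Collecting the three contributions gives $|R(u,c,t)|=\underline{O}(u^{-1/2})+\underline{O}(t^{-1/2})$, and since $u^{-1/2}$ is absorbed into $\underline{O}(\ln u/u)$ only for large $u$ — actually it is larger, so I instead keep it and note $u^{-1/2}\leqslant \ln u/u$ is \emph{false}; hence the cleanest route is to bound the increment on $0\leqslant v\leqslant\sqrt t$ purely by the $t^{-1/2}$ stabilisation estimate, avoiding the $u^{-1/2}$ altogether.

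The main obstacle, then, is the precise $t^{-1/2}$ estimate for $\AInt{M}{u,c}(t)-\AInt{M}{u,c}(t-v)$ when $v\leqslant\sqrt t$: one must show that, uniformly in $u$ (and $c$), this truncated integral changes by $O(v/\sqrt t)=O(t^{-1/2}\cdot\sqrt t/\sqrt t)$... more carefully, by $O(v\,t^{-1})$ times a Gaussian density evaluated at the edge, which near $t$ is exponentially small unless $t\asymp$ the relevant mean, in which regime it is $O(t^{-1/2})$; the delicate point is that the Gaussian in the integrand has \emph{mean and variance both growing like $x$}, so the "effective" argument is $x=ct/u$ and the edge contribution is $\Ugauss{0}{1}$ of a standardised variable whose scale is $D\sqrt{c/u}\sqrt{x+1}\asymp D t^{1/2}/u^{1/2}$ — one must check this produces the advertised $t^{-1/2}$ after multiplying by the measure of the $v$-range and integrating against $f_{\T{1}}$. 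I expect this case analysis (small $t$ vs.\ large $t$, and the location of $ct/u$ relative to the Gaussian mean $cM(x+1)$) to be the only genuinely technical part; the rest is the triangle inequality plus Markov's inequality for the $\T{1}$-tail. Once the edge estimate is in hand, adding $\underline{O}(\ln u/u)$ from Theorem~\ref{rtutujrtirt6} and $\underline{O}(t^{-1/2})$ from the two tail/continuity contributions yields exactly the stated bound $\underline{O}(\ln u/u)+\underline{O}(t^{-1/2})$ as $t,u\to\infty$.
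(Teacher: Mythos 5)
Your decomposition --- triangle inequality through Theorem~\ref{rtutujrtirt6}, Markov's inequality for the $\T{1}$-tail, and a quantitative "continuity in $t$" estimate for $\AInt{M}{u,c}$ --- is precisely the route the paper intends (the paper supplies only the one-sentence remark preceding the theorem, so this is the argument that has to be written out). The tail term $\AInt{M}{u,c}(t)\int_{t}^{\infty}f_{\T{1}}(v)\,dv$ and the range $v>\sqrt{t}$ are handled correctly.

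The gap is in the key estimate on $0\leqslant v\leqslant\sqrt{t}$. The "stabilisation of $\AInt{M}{u,c}$ at rate $t^{-1/2}$" on which you ultimately rely is asserted rather than proved, and as stated it is false uniformly in $u$: at $c=\cS$ the second line of \eqref{asdfgreherXX} gives $\AInt{M}{u,\cS}(\infty)-\AInt{M}{u,\cS}(t)=2\big(\UGauss{0}{1}\big(Mu/(D\sqrt{u+\cS t})\big)-\tfrac12\big)$, which is of order $\min(1,u/\sqrt{t})$, not $t^{-1/2}$; since the theorem lets $u\to\infty$ together with $t$, this uniformity matters. Your fallback derivative bound $\underline{O}(u^{-1/2})$ you correctly recognise as useless here, and the proposal then stops short of the needed estimate. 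The repair is elementary, though: keeping the full prefactor of the Gaussian density at the edge $x=cs/u$, one gets
\begin{equation*}
\frac{\partial}{\partial s}\,\AInt{M}{u,c}(s)
=\frac{u}{\sqrt{2\pi}\,D\,(cs+u)^{3/2}}
\exp\Bigg\{-\frac{\big((1-cM)\,s-Mu\big)^{2}}{2D^{2}(cs+u)}\Bigg\}
\leqslant\frac{1}{\sqrt{2\pi}\,D\,(cs+u)^{1/2}}
\leqslant\frac{1}{\sqrt{2\pi}\,D\,(cs)^{1/2}},
\end{equation*}
using only $u\leqslant cs+u$; i.e.\ the derivative is $\underline{O}(s^{-1/2})$ \emph{uniformly in $u$} for each fixed $c>0$. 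Hence for $v\leqslant\sqrt{t}$ the increment $\AInt{M}{u,c}(t)-\AInt{M}{u,c}(t-v)$ is at most $v\sup_{s\geqslant t-\sqrt{t}}\frac{\partial}{\partial s}\AInt{M}{u,c}(s)=\underline{O}(v\,t^{-1/2})$, and integrating against $f_{\T{1}}$ gives $\E\T{1}\cdot\underline{O}(t^{-1/2})$ --- exactly the claimed rate, with no case analysis on the position of $ct/u$ relative to the Gaussian mean. (Only the degenerate case $c=0$ genuinely needs the exponential factor, and there the statement reduces to the normal approximation for $\P\{\homV{t}>u\}$.) So the architecture of your proof is right, but the one estimate that carries the whole weight is missing, and the two substitutes you offer for it do not work.
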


\subsection{An outline of the proof}\label{srdtytujrt}

The proofs in \cite{[Malinovskii=2017a]}, \cite{[Malinovskii=2017b]} are
conducted in a uniform manner. In the proof of Theorem~\ref{srdthjrfX}, Step~0
is the identity \eqref{dfgbehredf}, or its copy \eqref{wertyrjhn}, for the
probability $\P\big\{v<\Tich{u,c}\leqslant t\mid\T{1}=v\big\}$. Step~1 is a
reduction of the range of integration in \eqref{dfgbehredf}. This cutting off
of unlikely events, such as when $\Y{n+1}$ is excessively large compared to the
whole sum $\sum_{i=1}^{n}\Y{i}$, complies with intuition. Step~2 is a reduction
of the range of summation in \eqref{dfgbehredf}. This applies Nagaev's
inequalities for sums used to reject the summands in the range $0<n<\EnOne$,
for which the probability of the event
$\{\homM{u+cv+cy}=n\}=\big\{\sum_{i=1}^{n}\Y{i}\leqslant
u+cv+cy<\sum_{i=1}^{n+1}\Y{i}\big\}$ is small, as $u+cv+cy$ is large. This
step is also intuitively clear. It relies on the fact that under mild technical
assumptions the occurrence of few renewals in a long time interval is an
unlikely event.

Step~3 consists in applying the Berry-Esseen bound with non-uniform remainder
term to $n$-fold convolutions $f_{Y}^{*n}$ and $f_{T}^{*n}$ in
\eqref{wertyrjhn}, where the ranges of integration and summation are reduced.
It is the nub of the proof, where the full force of the central limit theory is
applied. Step~4 is an estimation of the remaining term, and Step~5 is an
elaboration of the approximating term obtained in this way. In the course of
this study, certain identities (see, e.g., Section~4.3 in
\cite{[Malinovskii=2017a]}) are used, which allow us to represent sums in the
form of integral sums.

\section{Approximation for derivatives}\label{wqstgerhs}

The interest in derivatives $\frac{\partial}{\partial
c}\,\P\big\{\Tich{u,c}\leqslant t\big\}$ and $\frac{\partial}{\partial
u}\,\P\big\{\Tich{u,c}\leqslant t\big\}$ is related, e.g., with
Theorem~\ref{etrtjt}: equality \eqref{ewqrfghwre} is the basis of a standard
method for studying the monotony of an implicit function.

In the case we are considering, for each fixed $u$ the derivative
$\frac{\partial}{\partial c}\,\P\big\{\Tich{u,c}\leqslant t\big\}$, considered
as a function of $c\geqslant 0$, as well as for each fixed $c$ the derivative
$\frac{\partial}{\partial u}\,\P\big\{\Tich{u,c}\leqslant t\big\}$, considered
as a function of $u\geqslant 0$, are negative. This follows straightforwardly
from the definition of $\P\big\{\Tich{u,c}\leqslant t\big\}$: for each fixed
$u$, this function is monotone decreasing, as the variable $c\geqslant 0$
monotone increases; for each fixed $c$, this function is monotone decreasing,
as the variable $u\geqslant 0$ monotone increases.

We will study these derivatives much deeper. First, we examine their
approximations; this will allow us to draw conclusions about their magnitude.
Secondly, we develop a technique that allows us to study higher order
derivatives in the same way.

\subsection{Approximation for derivative $\frac{\partial}{\partial
c}\,\P\big\{\Tich{u,c}\leqslant t\big\}$}\label{adsfgvre}

\begin{theorem}\label{wertyujkl}
In the renewal model, let p.d.f. $f_{T}$ and differentiable $f_{Y}$ be bounded
above by a finite constant, $D^{\,2}>0$, $\E({T}^{3})<\infty$,
$\E({Y}^{3})<\infty$. Then for any fixed $c>0$ and $0<v<t$ we have
\begin{equation}\label{34t64y6e5yue}
\begin{aligned}
\frac{\partial}{\partial c}\,\P\big\{\Tich{u,c}\leqslant
t\big\}&=-\int_{0}^{t}f_{Y}(u+cv)\,v\,f_{\T{1}}(v)\,dv
\\[0pt]
&+\int_{0}^{t}\frac{\partial}{\partial c}\,\P\big\{v<\Tich{u,c}\leqslant
t\mid\T{1}=v\big\}\,f_{\T{1}}(v)\,dv,
\end{aligned}
\end{equation}
where
\begin{equation}\label{tyuturtfert}
\begin{aligned}
&\sup_{t>v}\;\Bigg|\;\frac{\partial}{\partial c}\,\P\big\{v<\Tich{u,c}\leqslant
t\mid\T{1}=v\big\}
\\[0pt]
&\hskip 60pt-\frac{M(u+cv)}{c^2D^2}\left((1-cM)\,\Elem{u,c}{0}(t\mid
v)-\Elem{u,c}{1}(t\mid v)\right)
\\[0pt]
&\hskip 60pt+\frac{Mu}{c^2D^2}\left((1-cM)\,\Elem{u,c}{1}(t\mid
v)-\Elem{u,c}{2}(t\mid v)\right)+\frac{1}{c}\,\Elem{u,c}{1}(t\mid v)
\\[0pt]
&\hskip 60pt-\frac{1}{c}\,\Elem{u,c}{2}(t\mid
v)\;\Bigg|=\underline{O}\left(\frac{\ln\left(u+cv\right)}{u+cv}\right),
\end{aligned}
\end{equation}
as $u+cv\to\infty$.
\end{theorem}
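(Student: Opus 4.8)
The equality \eqref{34t64y6e5yue} is just \eqref{wrgrhrtrr}--\eqref{srdtyrjrtj} of Lemma~\ref{yukiyulu}, so what remains is the uniform bound \eqref{tyuturtfert}. The plan is to run, on each of the three summands $\DerC{u,c}{1}(t\mid v)$, $\DerC{u,c}{2}(t\mid v)$, $\DerC{u,c}{3}(t\mid v)$ of \eqref{sdfghfghnfgn}, the five-step scheme of Section~\ref{srdtytujrt} that proves Theorem~\ref{srdthjrfX}, the only change being the weights sitting in front of the Kendall-type kernel. I would begin with an algebraic reduction. The inner $z$-integral in $\DerC{u,c}{1}(t\mid v)$ is, by the formula for $\P\{\homM{u+cv+cy}=n\}$ already used in \eqref{wertyrjhn}, equal to $\P\{\homM{u+cv+cy}=n\}$. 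In $\DerC{u,c}{2}(t\mid v)$ the braced expression equals $\big(f_{Y}^{*n}\big)^{\prime}(u+cv+cy-z)$ (the case $n=1$ being handled directly and fitting the same formula), so an integration by parts in $z$ rewrites its inner integral as $f_{Y}^{*n}(u+cv+cy)-f_{Y}^{*(n+1)}(u+cv+cy)-f_{Y}^{*n}(0)\,\P\{\Y{n+1}>u+cv+cy\}$, whose last term cancels $\DerC{u,c}{3}(t\mid v)$ identically (and vanishes anyway for $n\geqslant 2$, where $f_{Y}^{*n}(0)=0$).

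Consequently
\[
\DerC{u,c}{2}(t\mid v)+\DerC{u,c}{3}(t\mid v)=\sum_{n=1}^{\infty}\int_{0}^{t-v}\frac{(u+cv)(v+y)}{u+cv+cy}\,\big(f_{Y}^{*n}(u+cv+cy)-f_{Y}^{*(n+1)}(u+cv+cy)\big)\,f_{T}^{*n}(y)\,dy,
\]
and, since $\P\{\homM{w}=n\}=\int_{0}^{w}f_{Y}^{*n}(s)\,ds-\int_{0}^{w}f_{Y}^{*(n+1)}(s)\,ds$, the difference $f_{Y}^{*n}(w)-f_{Y}^{*(n+1)}(w)$ equals $\frac{d}{dw}\,\P\{\homM{w}=n\}$. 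Hence $\DerC{u,c}{1}(t\mid v)$ carries the kernel $\P\{\homM{u+cv+cy}=n\}$ with weight $-uy/(u+cv+cy)^{2}$, and $\DerC{u,c}{2}(t\mid v)+\DerC{u,c}{3}(t\mid v)$ carries its $w$-derivative with weight $(u+cv)(v+y)/(u+cv+cy)$: the same building blocks as in Theorem~\ref{srdthjrfX} (kernel $\P\{\homM{w}=n\}$, weight $(u+cv)/(u+cv+cy)$), up to the $w$-derivative of the kernel and the changed weights.

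Now I would apply Steps~1--5 of Section~\ref{srdtytujrt}. Steps~1 and~2 --- cutting off the $z$-range where $\Y{n+1}$ is atypically large and the range $0<n<\EnOne$ below the typical renewal count, by the tails from $\E({Y}^{3})<\infty$, $\E({T}^{3})<\infty$ and Nagaev's inequality --- go through verbatim, the discarded part being $\underline{O}((u+cv)^{-2})$; the new weights stay under control on the surviving range, the growing weight $(u+cv)(v+y)/(u+cv+cy)$ being compensated by the $(u+cv)^{-1}$-smallness of the difference kernel it multiplies. Step~3 replaces $f_{Y}^{*n}$, $f_{T}^{*n}$ --- and, for the second piece, also $f_{Y}^{*n}-f_{Y}^{*(n+1)}$ --- by their Gaussian counterparts through the Berry--Esseen bound with non-uniform remainder; treating the difference is the one genuinely new ingredient, since it requires a quantitative local limit theorem for the \emph{derivative} $\big(f_{Y}^{*n}\big)^{\prime}$, and it is here that the extra hypothesis ``$f_{Y}$ differentiable (and bounded)'' is used. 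Step~4 collects the remainders, which remain $\underline{O}(\ln(u+cv)/(u+cv))$, the logarithm produced --- as for Theorem~\ref{srdthjrfX} --- by the boundary index of the truncated sum. Step~5 makes the change of variables $x=cy/(u+cv)$, so that $w=u+cv+cy=(u+cv)(x+1)$; the weights together with the $w$-differentiation of the Gaussian kernel $\Ugauss{cM(x+1)}{\frac{c^{2}D^{2}}{u+cv}(x+1)}(x)$ then produce, after an integration by parts where needed, exactly the integrands $(x+1)^{-k}\,\Ugauss{cM(x+1)}{\frac{c^{2}D^{2}}{u+cv}(x+1)}(x)$ with $k=0,1,2$ (the factor $(c^{2}D^{2})^{-1}$ coming from the differentiation, the shift in $k$ from both sources); recognising the conditional analogues $\Elem{u,c}{k}(t\mid v)$ of \eqref{rterherher} and collecting coefficients gives the linear combination inside the absolute value in \eqref{tyuturtfert}.

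The main obstacle is Step~3 for $\DerC{u,c}{2}(t\mid v)+\DerC{u,c}{3}(t\mid v)$: one needs a Berry--Esseen-type estimate with non-uniform remainder for $\big(f_{Y}^{*n}\big)^{\prime}$ --- equivalently for $f_{Y}^{*n}-f_{Y}^{*(n+1)}$ --- strong enough that, after summation over the admissible $n$ and integration in $y$, the error is still only $\underline{O}(\ln(u+cv)/(u+cv))$, so that the differentiation costs nothing in the rate. A clean route is Fourier inversion: if $\varphi_{Y}$ is the characteristic function of $Y$, then $f_{Y}^{*n}-f_{Y}^{*(n+1)}$ has characteristic function $\varphi_{Y}(\theta)^{n}\,(1-\varphi_{Y}(\theta))$, and the expansion $1-\varphi_{Y}(\theta)=-i\theta\,\E{Y}+\underline{O}(\theta^{2})$ (using $\E({Y}^{2})<\infty$) reduces the estimate to the local limit theorem already used for Theorem~\ref{srdthjrfX}, applied to $\varphi_{Y}(\theta)^{n}$, plus a controlled correction of smaller order. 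The remaining Steps~1, 2, 4 and~5 are a lengthy but routine repetition of the corresponding arguments in \cite{[Malinovskii=2017a]}, \cite{[Malinovskii=2017b]}.
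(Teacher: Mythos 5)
Your proposal takes essentially the same approach as the paper: the paper's entire proof of Theorem~\ref{wertyujkl} is the remark that one starts from Lemma~\ref{yukiyulu} rather than Kendall's identity and then repeats the five-step scheme of Section~\ref{srdtytujrt}, with all details left to the reader. Your integration-by-parts reduction of $\DerC{u,c}{2}(t\mid v)+\DerC{u,c}{3}(t\mid v)$ to the kernel $f_{Y}^{*n}-f_{Y}^{*(n+1)}$ and your identification of the non-uniform local-limit estimate for $\big(f_{Y}^{*n}\big)^{\prime}$ as the one genuinely new ingredient are consistent with, and more explicit than, what the paper records.
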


The starting point in the proof of Theorems~\ref{srdthjrfX} was Kendall's
identity, whereas the starting point in the proof of Theorem~\ref{wertyujkl} is
Lemma~\ref{yukiyulu}. The proof of Theorem~\ref{wertyujkl} follows the scheme
outlined in Section~\ref{srdtytujrt}. We leave the details to the reader.

In the same way as Theorem~\ref{esrytrf} follows from Theorem~\ref{srdthjrfX},
as $u,t\to\infty$, it follows from Theorem~\ref{wertyujkl} that the derivative
$\frac{\partial}{\partial c}\,\P\big\{\Tich{u,c}\leqslant t\big\}$ is
approximated, as $u,t\to\infty$, by the expression
\begin{equation}\label{dsfgrehgdfhd}
\begin{aligned}
F_{u,c}(t)&=\frac{Mu}{c^2D^2}\,\left((1-cM)\,\Elem{u,c}{0}(t)-\Elem{u,c}{1}(t)\right)
\\[0pt]
&-\frac{Mu}{c^2D^2}\left((1-cM)\,\Elem{u,c}{1}(t)-\Elem{u,c}{2}(t)\right)
-\frac{1}{c}\,\Elem{u,c}{1}(t)+\frac{1}{c}\,\Elem{u,c}{2}(t).
\end{aligned}
\end{equation}

Using the representation of $\Elem{u,c}{0}(t)$, $\Elem{u,c}{1}(t)$,
$\Elem{u,c}{2}(t)$, such as \eqref{drgerherhe}, we can verify that
\begin{equation}\label{wqe4ryttjfyjk}
(1-cM)\,\Elem{u,c}{0}(t)-\Elem{u,c}{1}(t)
=\underline{O}\left(u^{-1}\right),\quad u\to\infty,
\end{equation}
and that
\begin{equation}\label{esrtgeyheje}
(1-cM)\,\Elem{u,c}{1}(t)-\Elem{u,c}{2}(t)
=\underline{O}\left(u^{-1}\right),\quad u\to\infty.
\end{equation}

\begin{figure}[t]
\center{\includegraphics[scale=.8]{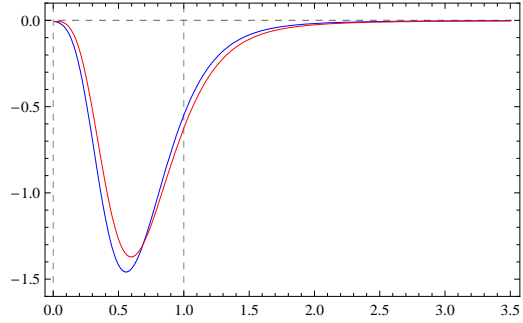}}
\caption{\small Graphs ($X$-axis is $c$) of the functions $F_{u,c}(t)$ (red),
$\AInt{M}{u,c}^{\,(0,1)}(t)$ (blue). Here $t=100$, $u=40$, $M=1$,
$D^2=6$.}\label{asrtdyujkl}
\end{figure}

\begin{remark}\label{wqe45r6u7rui}
Let us compare \eqref{dsfgrehgdfhd} with
$\AInt{M}{u,c}^{\,(0,1)}(t)\eqDef\frac{\partial}{\partial c}\AInt{M}{u,c}(t)$.
In other words, let us compare the ``approximation of derivative'' with the
``derivative of approximation''.

For $u>0$, $t>0$, $c>0$, we have by straightforward differentiation
\begin{equation*}
\begin{aligned}
\AInt{M}{u,c}^{\,(0,1)}(t)&=\frac{u\,(1-cM)}{c^{3}D^{2}}\,\Elem{u,c}{\,0}(t)
-\frac{1}{c}\,\Bigg(\frac{u\,(2-cM)}{c^{2}D^{2}}+1\Bigg)\,\Elem{u,c}{1}(t)
\\[-2pt]
&+\frac{u}{c^{3}D^{2}}\,\Elem{u,c}{\,2}(t)+\frac{t}{u+ct}\,
\Ugauss{cM(1+\frac{ct}{u})}{\frac{c^{2}D^{2}}{u}(1+\frac{ct}{u})}\bigg(\frac{ct}{u}\bigg).
\end{aligned}
\end{equation*}
Alternatively, this equality is written as
\begin{equation}\label{dfyjhkghkgh}
\begin{aligned}
\AInt{M}{u,c}^{\,(0,1)}(t)&=\frac{u}{c^{3}D^{2}}\left((1-cM)\,\Elem{u,c}{0}(t)-\Elem{u,c}{1}(t)\right)
\\[2pt]
&-\frac{u}{c^{3}D^{2}}\left((1-cM)\,\Elem{u,c}{1}(t)-\Elem{u,c}{2}(t)\right)
\\[0pt]
&-\frac{1}{c}\,\Elem{u,c}{1}(t)+\frac{t}{u+ct}\,
\Ugauss{cM(1+\frac{ct}{u})}{\frac{c^{2}D^{2}}{u}(1+\frac{ct}{u})}\bigg(\frac{ct}{u}\bigg).
\end{aligned}
\end{equation}
This equality\footnote{Bear in mind the asymptotic relations
\eqref{wqe4ryttjfyjk} and \eqref{esrtgeyheje}.} is suitable for comparison with
equality \eqref{dsfgrehgdfhd}. Both are illustrated in Fig.~\ref{asrtdyujkl}.
\end{remark}

We conclude this analysis with the following summary. The proximity between
$F_{u,c}(t)$, i.e., ``approximation of derivative'', and
$\AInt{M}{u,c}^{\,(0,1)}(t)$, i.e., ``derivative of approximation'',
illustrated numerically in Fig.~\ref{asrtdyujkl}, can be proved rigorously
using equalities \eqref{dsfgrehgdfhd} and \eqref{dfyjhkghkgh}, evaluated
analytically. However, the ``approximation of derivative'' is one thing and the
``derivative of approximation'' is another. Their study requires a separate
analysis; the naive belief that one can replace the other is largely
groundless.

\subsection{Approximation for derivative $\frac{\partial}{\partial
u}\,\P\big\{\Tich{u,c}\leqslant t\big\}$}\label{tyjtrujrtjurtj}

\begin{theorem}\label{qewrweytery}
In the renewal model, let p.d.f. $f_{T}$ and differentiable $f_{Y}$ be bounded
above by a finite constant, $D^{\,2}>0$, $\E({T}^{3})<\infty$,
$\E({Y}^{3})<\infty$. Then for any fixed $c>0$ and $0<v<t$ we have
\begin{equation}\label{retrehuerhr}
\begin{aligned}
\frac{\partial}{\partial u}\,\P\big\{\Tich{u,c}\leqslant
t\big\}&=-\int_{0}^{t}f_{Y}(u+cv)\,v\,f_{\T{1}}(v)\,dv
\\[0pt]
&+\int_{0}^{t}\frac{\partial}{\partial u}\,\P\big\{v<\Tich{u,c}\leqslant
t\mid\T{1}=v\big\}\,f_{\T{1}}(v)\,dv,
\end{aligned}
\end{equation}
where
\begin{equation}\label{dthrtjhurt}
\begin{aligned}
&\sup_{t>v}\;\Bigg|\;\frac{\partial}{\partial u}\,\P\big\{v<\Tich{u,c}\leqslant
t\mid\T{1}=v\big\}
\\[0pt]
&\hskip 60pt-\frac{M}{c\,D^2}\left((1-cM)\,\Elem{u,c}{1}(t\mid
v)-\Elem{u,c}{2}(t\mid v)\right)
\\[0pt]
&\hskip 60pt-\frac{1}{u}\left(\Elem{u,c}{1}(t\mid v)-\Elem{u,c}{2}(t\mid
v)\right)
\,\Bigg|=\underline{O}\left(\frac{\ln\left(u+cv\right)}{(u+cv)^2}\right),
\end{aligned}
\end{equation}
as $u+cv\to\infty$.
\end{theorem}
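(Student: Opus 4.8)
The identity \eqref{retrehuerhr} is precisely Lemma~\ref{ytuktyktly}, so all the work lies in the estimate \eqref{dthrtjhurt}, which I would establish along the five--stage scheme recalled in Section~\ref{srdtytujrt}, with Step~0 now being Lemma~\ref{ytuktyktly} (the analogue for the $u$--derivative of Kendall's identity, Assertion~\ref{23456uy7}) in place of Kendall's identity itself --- just as Theorem~\ref{wertyujkl} rests on Lemma~\ref{yukiyulu}. Before invoking the central limit machinery I would first recast the right--hand side of \eqref{serdtgfjkyfg} in a more transparent form.

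In the summand of \eqref{adsgfsfbdfnb} the inner integral is $\int_{0}^{A}\P\{\Y{n+1}>z\}\,f_{Y}^{*n}(A-z)\,dz=\P\{\homM{A}=n\}=\P\{V_{n}\leqslant A\}-\P\{V_{n+1}\leqslant A\}$, where $A\eqDef u+cv+cy$ and $V_{n}\eqDef\sum_{i=1}^{n}\Y{i}$; hence $\partial_{u}\P\{\homM{A}=n\}=\partial_{A}\P\{\homM{A}=n\}=f_{Y}^{*n}(A)-f_{Y}^{*(n+1)}(A)$, and $\partial_{u}\bigl((u+cv)/A\bigr)=cy/A^{2}$, so that differentiating \eqref{adsgfsfbdfnb} in $u$ gives
\[
\frac{\partial}{\partial u}\,\P\big\{v<\Tich{u,c}\leqslant t\mid\T{1}=v\big\}
=\sum_{n=1}^{\infty}\int_{0}^{t-v}\left[\frac{cy}{A^{2}}\,\P\{\homM{A}=n\}
+\frac{u+cv}{A}\left(f_{Y}^{*n}(A)-f_{Y}^{*(n+1)}(A)\right)\right]f_{T}^{*n}(y)\,dy,\quad A=u+cv+cy.
\]
(In the notation of \eqref{serdtgfjkyfg} the first term on the right is $\DerU{u,c}{1}(t\mid v)$ and the second is $\DerU{u,c}{2}(t\mid v)+\DerU{u,c}{3}(t\mid v)$, the last two recombining via an integration by parts in $z$, the boundary term at $z=A$ cancelling $\DerU{u,c}{3}(t\mid v)$.) This form already shows why \eqref{dthrtjhurt} improves on Theorem~\ref{wertyujkl} by one power of $(u+cv)^{-1}$: the weight $cy/A^{2}$ of the first piece is $\underline{O}\bigl((u+cv)^{-1}\bigr)$, while in the second piece the bracket $f_{Y}^{*n}(A)-f_{Y}^{*(n+1)}(A)$ is a difference of consecutive convolution powers, hence smaller by a factor of order $n^{-1}\asymp(u+cv)^{-1}$ than $f_{Y}^{*n}(A)$ on the range $n\asymp A/\E{Y}$ that carries the mass.

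With this reformulation I would run Steps~1--5 on each of the two pieces exactly as in the proof of Theorem~\ref{srdthjrfX}: cut off the unlikely ranges of $y$ and of $z$ (Step~1); discard the small indices $0<n<\EnOne$ by Nagaev's inequalities (Step~2); replace $f_{T}^{*n}$, $f_{Y}^{*n}$ and $f_{Y}^{*(n+1)}$ by their Gaussian local--limit approximants with a non--uniform Berry--Esseen remainder (Step~3); bound the discarded remainders (Step~4); and recast the surviving sums as integral sums over a variable $x$ with $cy=(u+cv)\,x$, hence $A=(u+cv)(x+1)$, which produces the elementary expressions \eqref{rterherher} (Step~5). In the first piece the weight becomes $\tfrac{cy}{A^{2}}=\tfrac{1}{u+cv}\bigl(\tfrac{1}{x+1}-\tfrac{1}{(x+1)^{2}}\bigr)$, so this piece contributes $\tfrac{1}{u+cv}\bigl(\Elem{u,c}{1}(t\mid v)-\Elem{u,c}{2}(t\mid v)\bigr)$, which is $\tfrac1u\bigl(\Elem{u,c}{1}(t\mid v)-\Elem{u,c}{2}(t\mid v)\bigr)$ up to $\underline{O}\bigl((u+cv)^{-2}\bigr)$. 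In the second piece the bracket $f_{Y}^{*n}(A)-f_{Y}^{*(n+1)}(A)=\partial_{A}\P\{\homM{A}=n\}$ is, by the renewal local limit theorem, a derivative of the Gaussian local--limit density; carrying the Step~5 reduction through it and applying the chain rule to the mean $cM(x+1)$ and variance $\tfrac{c^{2}D^{2}}{u}(x+1)$ of the Gaussian factor in \eqref{rterherher} (recall $\cS=M^{-1}$ and the definitions \eqref{adsfgherfjnfnm}) yields, after the algebra, the term $\tfrac{M}{cD^{2}}\bigl((1-cM)\,\Elem{u,c}{1}(t\mid v)-\Elem{u,c}{2}(t\mid v)\bigr)$. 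Adding the two contributions reproduces the main term subtracted in \eqref{dthrtjhurt}; adding the remainders --- each now carrying an extra explicit factor $\underline{O}\bigl((u+cv)^{-1}\bigr)$ relative to the $\underline{O}\bigl(\ln(u+cv)/(u+cv)\bigr)$ of Theorem~\ref{srdthjrfX} --- gives the asserted $\underline{O}\bigl(\ln(u+cv)/(u+cv)^{2}\bigr)$.

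The crux is Step~3 for the difference $f_{Y}^{*n}(A)-f_{Y}^{*(n+1)}(A)$: estimating each term separately by a non--uniform Berry--Esseen bound of size $\underline{O}(\ln A/A)$ would leave a remainder of that same size and wipe out the gain, so one must instead difference the two local--limit approximations and control the remainder of the difference at order $\underline{O}(\ln A/A^{2})$. This needs a local central limit theorem for the densities $f_{Y}^{*n}$ accurate to second order, with a remainder uniform enough both in the spatial argument and in $n$ to survive being differenced in $n$ and then summed over $n$; differentiability of $f_{Y}$ together with $\E(Y^{3})<\infty$, $\E(T^{3})<\infty$ and the boundedness of $f_{Y}$ and $f_{T}$ are exactly what make such an estimate available. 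Propagating this second--order estimate, and the $\tfrac{x}{(x+1)^{2}}$ bookkeeping, through the integral--sum reduction of Step~5 while keeping the remainder at $\underline{O}\bigl(\ln(u+cv)/(u+cv)^{2}\bigr)$ is the delicate part. By contrast the term $\DerU{u,c}{3}(t\mid v)$ itself poses no difficulty: since $f_{Y}^{*n}(0)=0$ for $n\geqslant2$ it reduces to the single summand $f_{Y}(0)\int_{0}^{t-v}\tfrac{u+cv}{A}\,\P\{\Y{2}>A\}\,f_{T}(y)\,dy=\underline{O}\bigl((u+cv)^{-2}\bigr)$ uniformly in $t$, and in any case it is absorbed in the integration by parts above.
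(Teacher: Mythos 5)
Your proposal takes exactly the route the paper intends: identity \eqref{retrehuerhr} is Lemma~\ref{ytuktyktly}, and \eqref{dthrtjhurt} is obtained by running the five-step scheme of Section~\ref{srdtytujrt} on the decomposition \eqref{serdtgfjkyfg}, with Lemma~\ref{ytuktyktly} playing the role of Step~0 in place of Kendall's identity --- which is all the paper itself supplies, the details being left to the reader as for Theorem~\ref{wertyujkl}. Your repackaging of $\DerU{u,c}{2}(t\mid v)+\DerU{u,c}{3}(t\mid v)$ as $\sum_{n\geqslant 1}\int_{0}^{t-v}\frac{u+cv}{u+cv+cy}\bigl(f_{Y}^{*n}(u+cv+cy)-f_{Y}^{*(n+1)}(u+cv+cy)\bigr)f_{T}^{*n}(y)\,dy$ is a correct identity (both sides are the weighted sum of $\frac{\partial}{\partial u}\,\P\{\homM{u+cv+cy}=n\}$), and your identification of a second-order local limit estimate for the differenced convolution densities, uniform in $n$ and the spatial argument, as the place where the improved rate $\underline{O}\bigl(\ln(u+cv)/(u+cv)^{2}\bigr)$ must be earned is precisely the point that distinguishes this proof from that of Theorem~\ref{wertyujkl}.
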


In the same way as in Section~\ref{adsfgvre}, we can deduce from
Theorem~\ref{qewrweytery} that the derivative $\frac{\partial}{\partial
u}\,\P\big\{\Tich{u,c}\leqslant t\big\}$ is approximated, as $u,t\to\infty$, by
the expression\footnote{Bear in mind the asymptotic relation
\eqref{esrtgeyheje}.}
\begin{equation}\label{fgyjfgkmhh}
G_{u,c}(t)=\frac{M}{c\,D^2}\left((1-cM)\,\Elem{u,c}{1}(t)-\Elem{u,c}{2}(t)\right)
+\frac{1}{u}\left(\Elem{u,c}{1}(t)-\Elem{u,c}{2}(t)\right).
\end{equation}

\begin{figure}[t]
\center{\includegraphics[scale=.8]{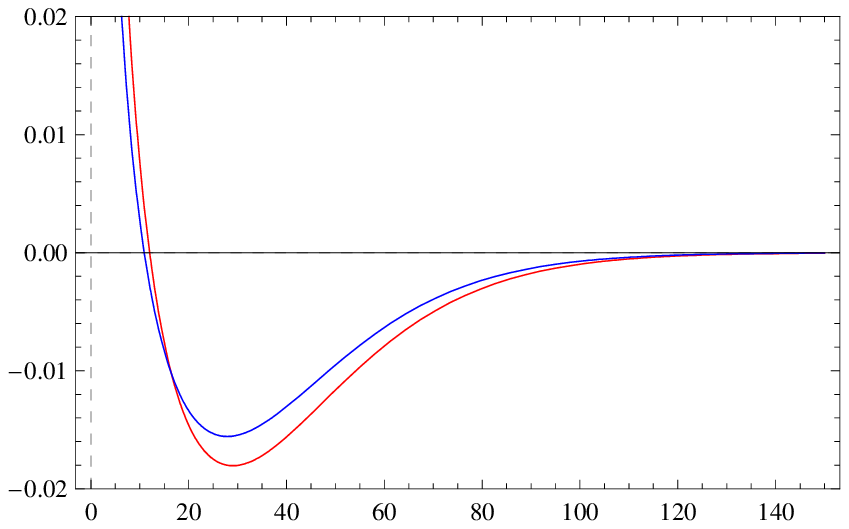}\\
\includegraphics[scale=.8]{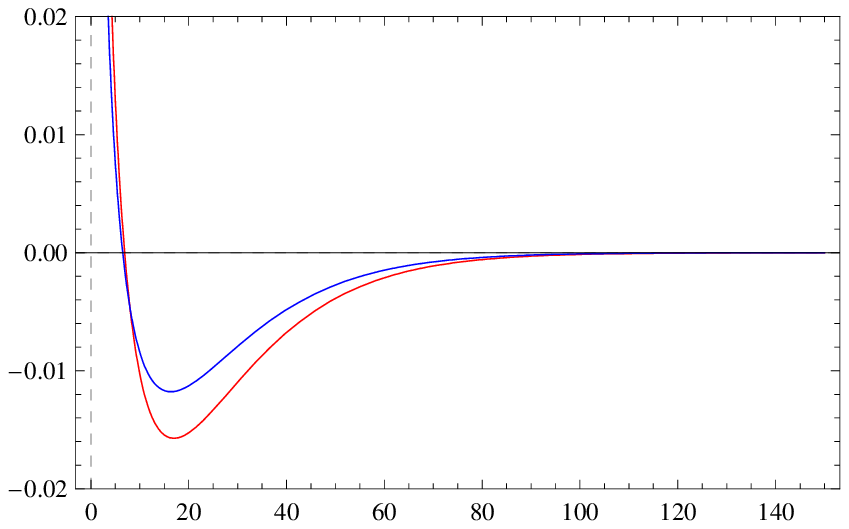}}
\caption{\small Graphs ($X$-axis is $u$) of the functions $G_{u,c}(t)$ (red),
$\AInt{M}{u,c}^{\,(1,0)}(t)$ (blue). Here $t=100$, $M=1$, $D^2=6$,
$c=0{.}8<\cS$ (above), $c=1{.}2>\cS$ (below), where $\cS=1$.}\label{yt7yiktyyu}
\end{figure}

\begin{remark}\label{aesrgegher}
Let us compare \eqref{fgyjfgkmhh} with
$\AInt{M}{u,c}^{\,(1,0)}(t)\eqDef\frac{\partial}{\partial u}\AInt{M}{u,c}(t)$.
In other words, let us compare the ``approximation of derivative'' with the
``derivative of approximation''.

For $u>0$, $t>0$, $c>0$, we have by straightforward differentiation
\begin{equation*}
\begin{aligned}
\AInt{M}{u,c}^{\,(1,0)}(t)&=-\frac{(1-cM)^{2}}{2c^{2}D^{2}}\,\Elem{u,c}{\,0}(t)+\frac{1}{2u}\,\Elem{u,c}{1}(t)
+\frac{(1-cM)}{c^2D^{2}}\,\Elem{u,c}{1}(t)
\\[0pt]
&-\frac{1}{2c^{2}D^{2}}\,\Elem{u,c}{2}(t)-\frac{ct}{u\,(u+ct)}\,\Ugauss{cM(1+\frac{ct}{u})}{\frac{c^{2}D^{2}}{u}
(1+\frac{ct}{u})}\bigg(\frac{ct}{u}\bigg).
\end{aligned}
\end{equation*}
Alternatively, this equality is written as
\begin{equation}\label{qwertyerjk}
\begin{aligned}
\AInt{M}{u,c}^{\,(1,0)}(t)&=\frac{1}{c^2D^{2}}\left((1-cM)\,\Elem{u,c}{1}(t)-\Elem{u,c}{2}(t\mid
v)\right)
\\[2pt]
&-\frac{1}{2c^2D^{2}}\left((1-cM)^{2}\,\Elem{u,c}{\,0}(t)-\Elem{u,c}{\,2}(t)\right)
\\[-2pt]
&+\frac{1}{2u}\,\Elem{u,c}{1}(t)-\frac{ct}{u\,(u+ct)}\,\Ugauss{cM(1+\frac{ct}{u})}{\frac{c^{2}D^{2}}{u}
(1+\frac{ct}{u})}\bigg(\frac{ct}{u}\bigg).
\end{aligned}
\end{equation}
This equality is suitable for comparison with equality \eqref{fgyjfgkmhh}. Both
are illustrated in Fig.~\ref{yt7yiktyyu}.
\end{remark}

\begin{remark}\label{w45ey6u5}
At the beginning of Section~\ref{wqstgerhs}, we noted that the derivative
$\frac{\partial}{\partial u}\,\P\big\{\Tich{u,c}\leqslant t\big\}$ is negative
\emph{for all} $u>0$. But neither the approximation $G_{u,c}(t)$ nor the
expression $\AInt{M}{u,c}^{\,(1,0)}(t)$ are negative for all $u>0$; for $u$
small and moderate (see Fig.~\ref{yt7yiktyyu}), both these expressions take
positive values. This is not a flaw, because the approximation of
Theorem~\ref{qewrweytery} only works for $u$ large.
\end{remark}

\subsection{Approximation for higher-order derivatives}

The approximations for higher-order derivatives, such as
$\frac{\partial^2}{\partial c^2}\,\P\big\{\Tich{u,c}\leqslant t\big\}$,
$\frac{\partial^2}{\partial u^2}\,\P\big\{\Tich{u,c}\leqslant t\big\}$, and
$\frac{\partial^2}{\partial c\,\partial u}\,\P\big\{\Tich{u,c}\leqslant
t\big\}$ used in \eqref{ewryjhnmfm} are carried out as described above, and
left to the reader.

\section{Approximations in inverse level crossing problem}\label{srgterjer}

\subsection{Structural results for fixed-probability level}\label{werdtyjmt}

The inverse level crossing problem when $T$ and $Y$ are exponentially
distributed with parameters $\paramT>0$ and $\paramY>0$ was studied in
\cite{[Malinovskii=2012]} and \cite{[Malinovskii=2014b]}. In a diffusion model,
the similar analysis was done in \cite{[Malinovskii=2014a]}; see also
\cite{[Malinovskii=2009]}. When $T$ and $Y$ are non-exponentially distributed,
simulation analysis in the inverse level crossing problem was done in
\cite{[Malinovskii=Kosova=2014]}.

Going along the road map set in \cite{[Malinovskii=2009]}--
\cite{[Malinovskii=2014b]}, we focus on the asymptotic structure of the
fixed-probability level. We start with the following simple result.

\begin{theorem}\label{gyuiytik}
Assume that $f_{T}(x)$ and $f_{Y}(x)$ are bounded above by a finite constant,
$D^{2}>0$, $\E({T}^{3})<\infty$, and $\E({Y}^{3})<\infty$. Then for
$t\to\infty$, we have
\begin{equation}\label{3etu6juh}
\Ruat(0)=\frac{t}{M}+\frac{D}{M^{\,3/2}}\,\NQuant{\alpha}\sqrt{t}\,\big(1+\overline{o}(1)\big).
\end{equation}
\end{theorem}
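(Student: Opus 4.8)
The plan is to specialize the approximation machinery of Section~\ref{asrgrhger} to the case $c=0$ and then invert. When $c=0$, the process $\homR{s}=u-\homV{s}$ is non-increasing, so $\Tich{u,0}\leqslant t$ is the event that the compound renewal sum $\homV{t}$ has exceeded level $u$ by time $t$, i.e. $\P\{\Tich{u,0}\leqslant t\}=\P\{\homV{t}>u\}$. First I would record the closed form of the approximating term at $c=0$: by \eqref{asdfgreherXX}, $\AInt{M}{u,0}(t)=\Elem{u,0}{1}(t)=\UGauss{0}{1}\!\big(M\sqrt{u}/D\big)-\UGauss{0}{1}\!\big((Mu-t)/(D\sqrt{u})\big)$. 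For $t\to\infty$ with $u$ of order $t$, the first term tends to $1$, so Theorem~\ref{esrytrf} (taken in the limit $c\to 0$, or equivalently Theorem~\ref{rtutujrtirt6} which has no $t\to\infty$ error for $c=0$ since $\AInt{M}{u,0}(t\mid v)$ does not depend on $v$) gives
\begin{equation*}
\P\{\Tich{u,0}\leqslant t\}=1-\UGauss{0}{1}\!\left(\frac{Mu-t}{D\sqrt{u}}\right)+\underline{O}\!\left(\frac{\ln u}{u}\right),\quad u\to\infty.
\end{equation*}

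Next I would solve the defining equation \eqref{rtyujtkjtyk} with $\alpha$ fixed. Setting the leading term equal to $\alpha$ gives $\UGauss{0}{1}\!\big((Mu-t)/(D\sqrt{u})\big)=1-\alpha$, hence $(Mu-t)/(D\sqrt u)=\NQuant{1-\alpha}=-\NQuant{\alpha}$, i.e. the quadratic relation $Mu - t = -\NQuant{\alpha}D\sqrt{u}$. Writing $\sqrt u$ as the unknown and solving, the relevant root is $\sqrt{u}=\big(-\NQuant{\alpha}D+\sqrt{\NQuant{\alpha}^{2}D^{2}+4Mt}\big)/(2M)=\sqrt{t/M}\,(1+\overline{o}(1))$ as $t\to\infty$, which upon squaring and expanding yields $u = t/M + (D/M^{3/2})\NQuant{\alpha}\sqrt{t}\,(1+\overline{o}(1))$. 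This is exactly \eqref{3etu6juh}; note the sign works out because $\UGauss{0}{1}$ is increasing and $\NQuant{1-\alpha}=-\NQuant\alpha$.

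To make this rigorous rather than heuristic I would argue by monotonicity and a sandwich. The map $u\mapsto\P\{\Tich{u,0}\leqslant t\}$ is strictly decreasing (stated at the start of Section~\ref{wqstgerhs}), so $\Ruat(0)$ is well defined and it suffices to show that for every $\varepsilon>0$ the candidate values $u_\pm(t)=t/M+(D/M^{3/2})(\NQuant{\alpha}\pm\varepsilon)\sqrt t$ satisfy $\P\{\Tich{u_-(t),0}\leqslant t\}>\alpha>\P\{\Tich{u_+(t),0}\leqslant t\}$ for $t$ large. Plugging $u_\pm(t)$ into the displayed approximation, one computes $(Mu_\pm-t)/(D\sqrt{u_\pm})\to\NQuant{\alpha}\pm\varepsilon$ (the $\sqrt{u_\pm}$ in the denominator is $\sim\sqrt{t/M}$ and the $\sqrt t$ numerator matches), while the error term $\underline O(\ln u/u)=\underline O(\ln t/t)\to 0$; since $1-\UGauss{0}{1}(\NQuant\alpha+\varepsilon)<\alpha<1-\UGauss{0}{1}(\NQuant\alpha-\varepsilon)$ strictly, the inequalities hold for $t$ large. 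Hence $u_-(t)<\Ruat(0)<u_+(t)$ eventually, which is the assertion \eqref{3etu6juh}.

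\textbf{Main obstacle.} The only delicate point is the uniformity in the error term as $u$ varies together with $t$: Theorem~\ref{esrytrf} is stated for fixed $c$ with $u,t\to\infty$, and here I am evaluating it along the coupled path $u=u_\pm(t)\sim t/M$. One must check that the $\underline O(\ln u/u)$ bound still applies — which it does, since the theorem's hypotheses (the moment and boundedness conditions on $f_T,f_Y$) do not involve $u$ or $t$, and along this path $\ln u/u\asymp\ln t/t\to 0$. A second minor point is handling the $c\to 0$ limit cleanly: rather than passing to the limit in Theorem~\ref{esrytrf}, it is cleaner to invoke Theorem~\ref{rtutujrtirt6} at $c=0$ directly (where $\AInt{M}{u,0}(t-v)=\AInt{M}{u,0}(t)$ is independent of $v$ and of $t$ in the relevant sense), so no extra $\overline{o}(1/\sqrt t)$ term appears; this is consistent with the fact that \eqref{3etu6juh} carries only a multiplicative $\overline o(1)$ and no additive error separate from it.
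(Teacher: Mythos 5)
Your argument is sound and reaches the right conclusion, but it takes a different route from the paper for the key approximation step. Both you and the paper begin with the same observation that $\P\{\Tich{u,0}\leqslant t\}=\P\{\homV{t}>u\}$ because at $c=0$ the trajectories only jump upward. From there the paper simply invokes the classical normal approximation for the compound renewal sum $\homV{t}$, with the well-known expansions \eqref{sdrthrjh} for $\E\homV{t}$ and $\D\homV{t}$, and reads off \eqref{3etu6juh}; it does not pass through the Kendall-identity machinery at all. You instead specialize the paper's own inverse Gaussian approximation (Theorems~\ref{rtutujrtirt6}/\ref{esrytrf} together with \eqref{asdfgreherXX}) to $c=0$ and then invert. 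Your route is self-contained within the paper's apparatus, comes with an explicit $\underline{O}(\ln u/u)$ error rate, and your monotonicity-plus-sandwich argument is more careful than the paper's one-line inversion; the cost is that you must interpret $\AInt{M}{u,0}$ as the $c\to 0$ limit and verify the uniformity along the coupled path $u\asymp t/M$, which you correctly identify and dispose of. The paper's route is more elementary and avoids both issues.

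Two slips to fix. First, a sign error in the quadratic: under the paper's convention (see the footnote to Theorem~\ref{ergtrewghwerg}, $0<\NQuant{\alpha}<\NQuant{\alpha/2}$, so $\NQuant{\alpha}$ is the \emph{upper} quantile with $\UGauss{0}{1}(\NQuant{\alpha})=1-\alpha$), the equation $\UGauss{0}{1}\big((Mu-t)/(D\sqrt{u})\big)=1-\alpha$ gives $(Mu-t)/(D\sqrt{u})=+\NQuant{\alpha}>0$, not $-\NQuant{\alpha}$. As written, your relation $Mu-t=-\NQuant{\alpha}D\sqrt{u}$ and the root $\sqrt{u}=\big(-\NQuant{\alpha}D+\sqrt{\NQuant{\alpha}^{2}D^{2}+4Mt}\big)/(2M)$ produce $u=t/M-(D/M^{3/2})\NQuant{\alpha}\sqrt{t}\,(1+\overline{o}(1))$, contradicting your own final display; the fix is to take $Mu-t=+\NQuant{\alpha}D\sqrt{u}$. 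Second, $\AInt{M}{u,0}(t\mid v)$ \emph{does} depend on $v$ (it is $\AInt{M}{u,0}(t-v)$, i.e.\ $\UGauss{0}{1}(M\sqrt{u}/D)-\UGauss{0}{1}\big((Mu-(t-v))/(D\sqrt{u})\big)$), so your claim that no $t^{-1/2}$ error arises at $c=0$ is not right; this is harmless, since an additive $\underline{O}(t^{-1/2})$ perturbation of the probability is absorbed into the multiplicative $\overline{o}(1)$ of \eqref{3etu6juh}, but the justification should go through Theorem~\ref{esrytrf} (or an $\E\T{1}<\infty$ argument) rather than through the incorrect $v$-independence.
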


Since $\Ruat(0)$ is a solution to the equation $\P\big\{\Tich{u,0}\leqslant
t\big\}=\alpha$, where $\Tich{u,0}\eqDef\inf\left\{s>0:\homV{s}>u\right\}$, and
since the trajectories of the compound renewal process $\homV{s}$, $s\geqslant
0$, are (a.s.) step functions with only jumps up, this equation rewrites as
$\P\left\{\homV{t}>u\right\}=\alpha$. Therefore, Theorem~\ref{gyuiytik} is a
direct corollary of the normal approximation for the distribution of compound
renewal process $\homV{t}$, which is well known: as $t\to\infty$, the
probability $\P\left\{\homV{t}>u\right\}$ is approximated by
\begin{equation*}
1-\UGauss{0}{1}\left(\frac{u-\E\homV{t}}{\sqrt{\,\D\homV{t}}}\right),
\end{equation*}
where
\begin{equation}\label{sdrthrjh}
\begin{aligned}
\E{\homV{t}}&=(\E{Y}/\E{T})\,t+\E{Y}(\D{T}-(\E{T})^{\,2})/(2(\E{T})^{\,2})+\overline{o}(1),
\\[0pt]
\D{\homV{t}}&=(((\E{Y})^{\,2}\D{T}+(\E{T})^{\,2}\D{Y})/(\E{Y})^3)\,t+\overline{o}(t),
\end{aligned}
\end{equation}
whence \eqref{3etu6juh}.

The following theorem is a generalization of Theorem~2.2 in
\cite{[Malinovskii=2014b]}; it is worthwhile to compare it with Theorem~1 in
\cite{[Malinovskii=2014a]}.

\begin{theorem}\label{dtfyjfgjk}
Assume that $f_{T}(x)$ and $f_{Y}(x)$ are bounded above by a finite constant,
$D^{2}>0$, $\E({T}^{3})<\infty$, and $\E({Y}^{3})<\infty$. Then for
$t\to\infty$, we have
\begin{equation}\label{eryhjmgh}
\Ruat(\cS)=\dfrac{D}{M^{\,3/2}}\,\NQuant{\alpha/2}\sqrt{t}\,\big(1+\overline{o}(1)\big).
\end{equation}
\end{theorem}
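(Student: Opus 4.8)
The plan is to read $\Ruat(\cS)$ off the inverse Gaussian approximation at the critical value $c=\cS$ — at which the increment of $\homV{s}-\cS s$ is centered, which is exactly why a $\sqrt{t}$-scaling and the factor $\alpha/2$ must appear — and then to pin down the root by a monotonicity sandwich. The hypotheses of Theorem~\ref{dtfyjfgjk} are those of Theorem~\ref{srdthjrfX}, and $\E\T{1}=\E{T}<\infty$, so Theorem~\ref{esrytrf} applies with $c=\cS$. Recalling that $\AInt{M}{u,\cS}(t)=\Elem{u,\cS}{1}(t)$ and that the explicit second line of \eqref{asdfgreherXX} gives a closed form for $\Elem{u,\cS}{1}(t)$, this is the only analytic input I would need.

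Fix $\varepsilon>0$ and set $u_{\pm}=u_{\pm}(t)\eqDef\frac{D}{M^{\,3/2}}\big(\NQuant{\alpha/2}\pm\varepsilon\big)\sqrt{t}$. Since $u_{\pm}(t)\to\infty$ and $u_{\pm}(t)\asymp\sqrt{t}$ as $t\to\infty$, Theorem~\ref{esrytrf} gives
\[
\P\big\{\Tich{u_{\pm},\cS}\leqslant t\big\}=\Elem{u_{\pm},\cS}{1}(t)
+\underline{O}\!\Big(\tfrac{\ln u_{\pm}}{u_{\pm}}\Big)+\underline{O}\big(t^{-1/2}\big)
=\Elem{u_{\pm},\cS}{1}(t)+\overline{o}(1),\quad t\to\infty .
\]
I would then substitute $u=u_{\pm}(t)$ into \eqref{asdfgreherXX}: the argument $M\sqrt{u_{\pm}}/D\asymp t^{1/4}\to\infty$, so $\UGauss{0}{1}\big(M\sqrt{u_{\pm}}/D\big)\to 1$; and since $\cS=M^{-1}$ and $u_{\pm}=O(\sqrt{t})$ we have $u_{\pm}+\cS t=(t/M)\big(1+\overline{o}(1)\big)$, whence
\[
\frac{Mu_{\pm}}{D\sqrt{u_{\pm}+\cS t}}
=\frac{(D/\sqrt{M})\big(\NQuant{\alpha/2}\pm\varepsilon\big)\sqrt{t}}
{D\sqrt{t/M}\,\big(1+\overline{o}(1)\big)}\;\longrightarrow\;\NQuant{\alpha/2}\pm\varepsilon .
\]
Hence $\P\{\Tich{u_{\pm},\cS}\leqslant t\}\to 2\big(1-\UGauss{0}{1}(\NQuant{\alpha/2}\pm\varepsilon)\big)$; by $\UGauss{0}{1}(\NQuant{\alpha/2})=1-\alpha/2$ and strict monotonicity of $\UGauss{0}{1}$, this limit is $<\alpha$ for the sign ``$+$'' and $>\alpha$ for the sign ``$-$'', so there is $t_{\varepsilon}$ with $\P\{\Tich{u_{+},\cS}\leqslant t\}<\alpha<\P\{\Tich{u_{-},\cS}\leqslant t\}$ for all $t>t_{\varepsilon}$.

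Since $u\mapsto\P\{\Tich{u,\cS}\leqslant t\}$ is continuous and non-increasing — as recalled at the start of Section~\ref{wqstgerhs} — the root $\Ruat(\cS)$ of $\P\{\Tich{u,\cS}\leqslant t\}=\alpha$ then satisfies $u_{-}(t)\leqslant\Ruat(\cS)\leqslant u_{+}(t)$ for $t>t_{\varepsilon}$; dividing by $\frac{D}{M^{\,3/2}}\sqrt{t}$ and letting first $t\to\infty$ and then $\varepsilon\downarrow 0$ yields \eqref{eryhjmgh}. The step I expect to need the most care is the application of Theorem~\ref{esrytrf}, whose statement keeps $c$ fixed while letting $u,t\to\infty$ freely, along the coupled path $u=u_{\pm}(t)\asymp\sqrt{t}$: this is legitimate precisely because its remainder is exhibited explicitly as $\underline{O}(\ln u/u)+\underline{O}(t^{-1/2})$, so on $u\asymp\sqrt{t}$ it collapses to $\underline{O}(t^{-1/2}\ln t)=\overline{o}(1)$. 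A minor auxiliary point is the continuity and monotonicity in $u$ used to sandwich the root, which follow from the elementary properties of $\P\{\Tich{u,\cS}\leqslant t\}$ already invoked in the paper. (One could equally start from Theorem~\ref{rtutujrtirt6}, checking, with the help of $\E\T{1}<\infty$, that $\int_{0}^{t}\AInt{M}{u,\cS}(t-v)\,f_{\T{1}}(v)\,dv=\Elem{u,\cS}{1}(t)+\overline{o}(1)$ when $u\asymp\sqrt{t}$.)
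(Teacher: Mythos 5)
Your proof is correct and follows essentially the same route as the paper: both evaluate the inverse Gaussian approximation at $c=\cS$ via the closed form \eqref{asdfgreherXX} and observe that on the scale $u\asymp\sqrt{t}$ the equation collapses to $2\big(1-\UGauss{0}{1}(x)\big)=\alpha$. The only differences are cosmetic --- you start from the unconditional Theorem~\ref{esrytrf} rather than from \eqref{ewrktulertye} combined with the conditional approximation, and your $\pm\varepsilon$ monotonicity sandwich makes rigorous the final ``whence the result'' step that the paper leaves implicit.
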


\begin{proof}[Proof of Theorem~\ref{dtfyjfgjk}]\label{yukryikt}
The left-hand side of equation \eqref{rtyujtkjtyk} is (see
\eqref{ewrktulertye})
\begin{equation*}
\P\left\{\Tich{u,c}\leqslant
t\right\}=\int_{0}^{t}\P\left\{u+cv-Y<0\right\}\,f_{\T{1}}(v)\,dv
+\int_{0}^{t}\P\left\{v<\Tich{u,c}\leqslant
t\mid\T{1}=v\right\}\,f_{\T{1}}(v)\,dv,
\end{equation*}
whence for $c=\cS$ \eqref{rtyujtkjtyk} rewrites as
\begin{equation}\label{ewtyjhjmfg}
\int_{0}^{t}\P\left\{u+\cS v<\Y{1}\right\}f_{\T{1}}(v)\,dv
+\int_{0}^{t}\P\left\{v<\Tich{u,\cS}\leqslant
t\mid\T{1}=v\right\}f_{\T{1}}(v)\,dy=\alpha.
\end{equation}
Bearing in mind the second equality in \eqref{asdfgreherXX}, the probability
$\P\left\{v<\Tich{u,\cS}\leqslant t\mid\T{1}=v\right\}$ is approximated by
\begin{equation*}
2\left(\UGauss{0}{1}\left(\sqrt{\frac{\E{T}(v\,\E{Y}+u\,\E{T})}{(\E{Y})^{2}D^{2}}}\,\right)
-\UGauss{0}{1}\left(\sqrt{\frac{\E{T}}{(\E{Y})^{2}D^{2}}}
\frac{u\,\E{T}+v\,\E{Y}}{\sqrt{u\,\E{T}+t\,\E{Y}}}\right)\right).
\end{equation*}

Let us show that $\Ruat(\cS)$ in \eqref{eryhjmgh} is an asymptotic solution to
equation \eqref{ewtyjhjmfg}. First, bearing in mind that $\E{T}^{3}<\infty$,
$\E{Y}^{3}<\infty$, it is easily seen that
\begin{equation*}
\int_{0}^{t}\P\{u+\cS v-\Y{1}<0\}f_{\T{1}}(v)\,dv\to 0,\quad t\to\infty,\
u\to\infty.
\end{equation*}
Secondly, it is easy to see that
\begin{equation*}
\UGauss{0}{1}\left(\sqrt{\frac{\E{T}(v\,\E{Y}+u\,\E{T})}{(\E{Y})^{2}D^{2}}}\,\right)\to
1,\quad u\to\infty.
\end{equation*}
Selecting in \eqref{eryhjmgh} $u$ as $\underline{O}\left(t^{1/2}\right)$, we
have
\begin{equation*}
\frac{u\,\E{T}+v\,\E{Y}}{\sqrt{u\,\E{T}+t\,\E{Y}}}
=\frac{u\,\E{T}}{\sqrt{t\,\E{Y}}}\,\big(1+\overline{o}(1)\big),\quad
t\to\infty.
\end{equation*}
Therefore, equation \eqref{ewtyjhjmfg} reduces to
\begin{equation*}
2\left(1-\UGauss{0}{1}\left(\sqrt{\frac{\E{T}}{(\E{Y})^{2}D^{2}}}
\frac{u\,\E{T}}{\sqrt{t\,\E{Y}}}\right)\right)=\alpha,
\end{equation*}
whence the result.
\end{proof}

The following theorem is a generalization of Theorem~2.2 in
\cite{[Malinovskii=2014b]}. It is useful to compare it with Theorem~2 in
\cite{[Malinovskii=2014a]}, or Theorem~4.4 in \cite{[Malinovskii=2009]}.

\begin{theorem}\label{ergtrewghwerg}
Assume that differentiable  $f_{T}(x)$ and $f_{Y}(x)$ are bounded above by a
finite constant, and $D^{2}>0$, $\E({T}^{3})<\infty$, $\E({Y}^{3})<\infty$.
Then for $\cS=M^{-1}$ we have
\begin{equation}\label{asdfgrfjhf}
\Ruat(c)=\begin{cases} \big(\cS-c\big)\,t+\dfrac{D}{M^{\,3/2}}\,\funU{\alpha,t}
\left(\dfrac{M^{\,3/2}(\cS-c)}{D}\sqrt{t}\,\right)\sqrt{t}, &0\leqslant
c\leqslant\cS,
\\[10pt]
\dfrac{D}{M^{\,3/2}}\,\funU{\alpha,t}
\left(\dfrac{M^{\,3/2}(\cS-c)}{D}\sqrt{t}\,\right)\sqrt{t},&c>\cS,
\end{cases}
\end{equation}
where for $t$ sufficiently large the function $\funU{\alpha,t}(y)$,
$y\in\Rline$, is continuous and monotone increasing, as $y$ increases from
$-\infty$ to $0$, and monotone decreasing, as $y$ increases from $0$ to
$\infty$, and such that\footnote{We recall that
$0<\NQuant{\alpha}<\NQuant{\alpha/2}$ for $0<\alpha<\frac12$.}
\begin{equation*}
\lim_{y\to-\infty}\funU{\alpha,t}(y)=0,\
\lim_{y\to\infty}\funU{\alpha,t}(y)=\NQuant{\alpha}
\end{equation*}
and $\funU{\alpha,t}(0)=\NQuant{\alpha/2}\,\big(1+\overline{o}(1)\big)$, as
$t\to\infty$.
\end{theorem}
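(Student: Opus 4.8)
The plan is to replace the implicit equation \eqref{rtyujtkjtyk} by the explicit equation $\AInt{M}{u,c}(t)=\alpha$ (recall that $\AInt{M}{u,c}(t)=\Elem{u,c}{1}(t)$) by means of the inverse Gaussian approximation, and then to read off the structure \eqref{asdfgrfjhf} from the scaling limit of $\AInt{M}{u,c}(t)$ in the critical window $c\approx\cS$. First I would invoke Theorem~\ref{esrytrf}, which gives $\P\big\{\Tich{u,c}\leqslant t\big\}=\AInt{M}{u,c}(t)+\underline{O}(\ln u/u)+\underline{O}(t^{-1/2})$ as $t,u\to\infty$. Since $u\mapsto\P\big\{\Tich{u,c}\leqslant t\big\}$ is continuous and strictly decreasing, \eqref{rtyujtkjtyk} has a unique root $\Ruat(c)$, and I would check a posteriori that $\Ruat(c)$ is of order $\sqrt t$ throughout the window $\cS-c=O(t^{-1/2})$ (and of order $t$ for $c$ bounded away from $\cS$ from below). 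Near such a root, writing $W=M^{3/2}u/(D\sqrt t)$, the explicit derivative \eqref{qwertyerjk} of Remark~\ref{aesrgegher} together with the closed forms \eqref{drgerherhe} and \eqref{xzfvbdbf} gives $\partial_{u}\AInt{M}{u,c}(t)\asymp-t^{-1/2}$, i.e.\ $\partial_{W}\AInt{M}{u,c}(t)$ is bounded away from $0$. A one-dimensional squeeze then shows that $\Ruat(c)$ and the root $u^{\ast}(c,t)$ of $\AInt{M}{u,c}(t)=\alpha$ differ by $\underline{O}(\ln t)=\overline{o}(\sqrt t)$, so it is enough to analyse $\AInt{M}{u,c}(t)=\alpha$.

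Next I would set $y=M^{3/2}(\cS-c)\sqrt t/D$ and $W=M^{3/2}u/(D\sqrt t)$, so that $c=\cS-Dy/(M^{3/2}\sqrt t)$ and $u=DW\sqrt t/M^{3/2}$, and substitute into \eqref{drgerherhe} and \eqref{xzfvbdbf}. Letting $t\to\infty$ with $y$ and $W$ fixed and using $1-cM=M(\cS-c)$, $\muIG=\cS/(\cS-c)$, $\lambdaIG=u/(c^{2}D^{2})$ — so that at $x=1+ct/u$ one has $x/\muIG\to y/W$, $\lambdaIG/x\to W^{2}$, $\lambdaIG/\muIG\to yW$, while the lower-endpoint contribution at $x=1$ vanishes — a direct computation gives, in \emph{both} cases of \eqref{drgerherhe},
\[
\AInt{M}{u,c}(t)\ \longrightarrow\ \UGauss{0}{1}(y-W)+e^{2yW}\,\UGauss{0}{1}\big(-(y+W)\big)\ =\ \P\Big\{\sup_{0\leqslant s\leqslant1}\big(B_{s}+ys\big)\geqslant W\Big\},
\]
where $B$ is a standard Brownian motion and the last equality is the classical finite-horizon boundary-crossing identity. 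Hence the scaled root of $\AInt{M}{u,c}(t)=\alpha$ converges to $W_{\alpha}(y)$, the $\alpha$-quantile determined by $\P\{\sup_{0\leqslant s\leqslant1}(B_{s}+ys)\geqslant W_{\alpha}(y)\}=\alpha$. I would then \emph{define} $\funU{\alpha,t}(y)$ so that \eqref{asdfgrfjhf} holds exactly, namely $\funU{\alpha,t}(y)=M^{3/2}\Ruat(c)/(D\sqrt t)-\max\{y,0\}$ with $c=c(y,t)=\cS-Dy/(M^{3/2}\sqrt t)$; then the displayed structure is automatic, and the first step gives $\funU{\alpha,t}(y)\to W_{\alpha}(y)-\max\{y,0\}$ locally uniformly in $y$, with error $\overline{o}(1)$.

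It remains to establish the stated properties of $\funU$, which I would read off from the Brownian representation. Pathwise, $s\mapsto B_{s}+ys$ is increasing in $y$ on $(0,1]$ and satisfies $\sup_{s\leqslant1}(B_{s}+y_{2}s)\leqslant\sup_{s\leqslant1}(B_{s}+y_{1}s)+(y_{2}-y_{1})$ for $y_{1}<y_{2}$; hence $W_{\alpha}$ is strictly increasing and $y\mapsto W_{\alpha}(y)-y$ strictly decreasing (strictness because the laws involved are atomless). Thus $\lim_{t\to\infty}\funU{\alpha,t}$ is strictly increasing on $(-\infty,0)$, strictly decreasing on $(0,\infty)$, and continuous at $0$ (both branches equal $W_{\alpha}(0)$ there), and $\funU{\alpha,t}$ inherits these for $t$ large by perturbation, using the derivative approximations of Theorems~\ref{wertyujkl} and \ref{qewrweytery} (this is what the differentiability hypotheses are for) and the fact that the limit's derivative is bounded away from $0$ on compacta. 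For the endpoints: $\sup_{s\leqslant1}(B_{s}+ys)\to0$ a.s.\ as $y\to-\infty$ gives the limit $0$; reversing time about $1$ gives $\sup_{s\leqslant1}(B_{s}+ys)-y\to B_{1}$ in law as $y\to+\infty$, with $B_{1}$ standard Gaussian, so the limit is $\NQuant{\alpha}$; and the reflection principle gives $\sup_{s\leqslant1}B_{s}\overset{d}{=}|B_{1}|$, whence $2\,\UGauss{0}{1}(-W_{\alpha}(0))=\alpha$, i.e.\ $W_{\alpha}(0)=\NQuant{\alpha/2}$ and $\funU{\alpha,t}(0)=\NQuant{\alpha/2}(1+\overline{o}(1))$ — consistent with Theorems~\ref{gyuiytik} and \ref{dtfyjfgjk}. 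For $c$ fixed in $[0,\cS)$ the same limit applies with $y\to\infty$ and reduces \eqref{asdfgrfjhf} to $\Ruat(c)=(\cS-c)t+\tfrac{D}{M^{3/2}}\NQuant{\alpha}\sqrt t\,(1+\overline{o}(1))$; for $c$ fixed $>\cS$ I would instead use that $\sup_{s\geqslant0}(\homV{s}-cs)<\infty$ a.s.\ (strong law), so $\P\{\Tich{u,c}<\infty\}\to0$ as $u\to\infty$, giving $\Ruat(c)=O(1)$ and $\funU{\alpha,t}\to0$, which matches $y\to-\infty$.

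The step I expect to be the main obstacle is the uniformity: carrying the reduction from $\P\big\{\Tich{u,c}\leqslant t\big\}$ to $\AInt{M}{u,c}(t)$ and then to the limiting equation \emph{simultaneously} for $y$ in a fixed compact set, for $|y|$ growing slowly with $t$ (so as to splice continuously with the two non-critical regimes), and across the endpoint $y=0$, where the analytic form of $\funU$ changes; equivalently, extracting from the $\underline{O}$-bounds a non-degeneracy estimate $|\partial_{u}\AInt{M}{u,c}(t)|\gtrsim t^{-1/2}$ near the root that is strong enough to transfer \emph{strict} monotonicity from the limit to $\funU{\alpha,t}$ at finite (large) $t$. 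A related delicacy is that for $c$ appreciably above $\cS$ Theorem~\ref{esrytrf} does not apply — it presupposes $u\to\infty$ — so that range genuinely requires the separate a.s.-finiteness (tightness) argument indicated above.
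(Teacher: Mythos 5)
Your proposal is correct in outline but follows a genuinely different route from the paper. The paper's proof never passes to an explicit limiting equation: it performs the changes of variables \eqref{serdthyntm} and \eqref{drtgrfjhnrtjhntr} directly in $\P\{\Tich{u,c}\leqslant t\}=\alpha$, and then establishes the monotonicity of $\funU{\alpha,t}$ \emph{at finite large $t$} by applying the implicit-function derivative formula \eqref{ewqrfghwre} and reading off the signs of the numerator and denominator from the derivative approximations \eqref{dsfgrehgdfhd} and \eqref{fgyjfgkmhh}. You instead first replace \eqref{rtyujtkjtyk} by $\AInt{M}{u,c}(t)=\alpha$ via Theorem~\ref{esrytrf}, identify the scaling limit of $\AInt{M}{u,c}(t)$ in the window $\cS-c=\underline{O}(t^{-1/2})$ with the Brownian boundary-crossing probability $\UGauss{0}{1}(y-W)+e^{2yW}\UGauss{0}{1}(-(y+W))$ (your computation of the limit from \eqref{drgerherhe}, \eqref{xzfvbdbf} checks out in both branches), derive all qualitative properties of the limit by soft pathwise arguments, and only then transfer them back by perturbation. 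Your route buys more: it produces the explicit limit function $W_{\alpha}(y)$, makes the values $0$, $\NQuant{\alpha}$, $\NQuant{\alpha/2}$ at $y\to-\infty$, $y\to+\infty$, $y=0$ transparent (the paper's proof does not actually establish these limits -- they are imported from Theorems~\ref{gyuiytik} and \ref{dtfyjfgjk}), and connects naturally to Theorems~\ref{tyukytkXX}--\ref{rthtrhjrthjXX}. The paper's route is shorter for the one genuinely quantitative claim, namely monotonicity at finite $t$.

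Two remarks on the step you flag as the obstacle. First, for $y<0$ the monotonicity of $\funU{\alpha,t}$ is actually free: there $\funU{\alpha,t}(y)=M^{3/2}\Ruat(c(y))/(D\sqrt{t})$ with $c(y)$ decreasing in $y$, and $\Ruat(\cdot)$ is monotone decreasing for \emph{all} $t$ by definition of $\Tich{u,c}$; no approximation (and no appeal to Theorem~\ref{esrytrf}, which indeed fails for $c$ bounded away from $\cS$ above) is needed. Second, for $y>0$ the claim amounts to $\frac{d}{dc}\Ruat(c)>-t$, and here locally uniform convergence of $\funU{\alpha,t}$ to the strictly monotone limit is not enough -- you must control the sign of $\frac{d}{dy}\funU{\alpha,t}(y)$ itself, which forces you through \eqref{ewqrfghwre} with the approximations of Theorems~\ref{wertyujkl} and \ref{qewrweytery}, i.e.\ precisely the computation \eqref{yujmyhmk}--\eqref{ergtewgwergy} constituting Step~1 of the paper's proof. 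You cite the right tools, so this is a matter of execution rather than a gap, but be aware that once this step is written out your argument and the paper's coincide there.
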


\begin{proof}[Proof of Theorem~\ref{ergtrewghwerg}]\label{fgjhrtjkrkj}
This proof is carried out in two stages. In each stage we make a suitable
change of variables. Its aim is to focus on the function $\funU{\alpha,t}(y)$,
$y\in\Rline$, and to check its monotony using the standard criterion based on
the sign of its derivative; this is calculated by means of (see
Theorem~\ref{etrtjt}) the implicit function derivative theorem.

\par\textbf{\emph{Step~1.}}
Let us consider the case $0<c<\cS=M^{-1}$. Regarding
equation~\eqref{rtyujtkjtyk}, we switch from the variables $u$ and $c$\; in its
left-hand side to the variables
\begin{equation}\label{serdthyntm}
z=\frac{u}{DM^{-3/2}\sqrt{t}}-\frac{(M^{-1}-c)\sqrt{t}}{DM^{-3/2}}\in\Rline,\quad
y=\frac{(M^{-1}-c)\sqrt{t}}{DM^{-3/2}}>0.
\end{equation}

The original equation \eqref{rtyujtkjtyk} rewrites as
\begin{equation*}
\P\big\{\Tich{u,c}\leqslant t\big\}\,\big|_{u=\frac{D\sqrt{t}}{M^{\,3/2}}(z+y),
c=\frac{1}{M}-\frac{D}{M^{\,3/2}\sqrt{t}}y} =\alpha.
\end{equation*}

To prove that $\funU{\alpha,t}(y)$, $y>0$, is monotone decreasing, we have to
prove that $\frac{d}{dy}\,\funU{\alpha,t}(y)<0$, $y>0$. Referring to the
implicit function derivative theorem (see Theorem~\ref{etrtjt}), we have
\begin{equation}\label{retyjrkrt}
\left.\frac{d}{dy}\,\funU{\alpha,t}(y)=-\left(\frac{\frac{\partial}{\partial
y}\Bigg(\P\big\{\Tich{u,c}\leqslant
t\big\}\,\Big|_{u=\frac{D\sqrt{t}}{M^{\,3/2}}(z+y),
c=\frac{1}{M}-\frac{D}{M^{\,3/2}\sqrt{t}}y} \Bigg)}{\frac{\partial}{\partial
z}\Bigg(\P\big\{\Tich{u,c}\leqslant
t\big\}\,\Big|_{u=\frac{D\sqrt{t}}{M^{\,3/2}}(z+y),
c=\frac{1}{M}-\frac{D}{M^{\,3/2}\sqrt{t}}y}\Bigg)}\right)
\;\right|_{z=\funU{\alpha,t}(y)}.
\end{equation}
The numerator is
\begin{equation}\label{yujmyhmk}
\begin{aligned}
&\frac{\partial}{\partial y}\left(\P\big\{\Tich{u,c}\leqslant
t\big\}\,\big|_{u=\frac{D\sqrt{t}}{M^{\,3/2}}\,(z+y),
c=\frac{1}{M}-\frac{D}{M^{\,3/2}\sqrt{t}}y}\right)
\\
&=\Bigg(\frac{\partial}{\partial u}\P\big\{\Tich{u,c}\leqslant
t\big\}\Bigg)\,\Bigg|_{u=\frac{D\sqrt{t}}{M^{\,3/2}}\,(z+y),
c=\frac{1}{M}-\frac{D}{M^{\,3/2}\sqrt{t}}y}\,\underbrace{\frac{\partial}{\partial
y}\,\Bigg(\frac{D\sqrt{t}}{M^{\,3/2}}\,(z+y)\Bigg)}_{\frac{D\sqrt{t}}{M^{\,3/2}}}
\\
&+\Bigg(\frac{\partial}{\partial c}\P\big\{\Tich{u,c}\leqslant
t\big\}\Bigg)\,\bigg|_{u=\frac{D\sqrt{t}}{M^{\,3/2}}\,(z+y),
c=\frac{1}{M}-\frac{D}{M^{\,3/2}\sqrt{t}}y}\underbrace{\frac{\partial}{\partial
y}\,\Bigg(\frac{1}{M}-\frac{D}{M^{\,3/2}\sqrt{t}}\,y\,\Bigg)}_{-\frac{D}{M^{\,3/2}\sqrt{t}}},
\end{aligned}
\end{equation}
whose approximation, as $t\to\infty$, follows from \eqref{fgyjfgkmhh},
\eqref{dsfgrehgdfhd}, \eqref{esrtgeyheje}, \eqref{wqe4ryttjfyjk}; for large $t$
this is obviously negative. The denominator is
\begin{equation}\label{ergtewgwergy}
\begin{aligned}
&\frac{\partial}{\partial z}\left(\P\big\{\Tich{u,c}\leqslant
t\big\}\,\big|_{u=\frac{D\sqrt{t}}{M^{\,3/2}}\,(z+y),
c=\frac{1}{M}-\frac{D}{M^{\,3/2}\sqrt{t}}y}\right)
\\
&=\Bigg(\frac{\partial}{\partial u}\P\big\{\Tich{u,c}\leqslant
t\big\}\Bigg)\,\Bigg|_{u=\frac{D\sqrt{t}}{M^{\,3/2}}\,(z+y),
c=\frac{1}{M}-\frac{D}{M^{\,3/2}\sqrt{t}}y}\,\underbrace{\frac{\partial}{\partial
z}\,\Bigg(\frac{D\sqrt{t}}{M^{\,3/2}}\,(z+y)\Bigg)}_{\frac{D\sqrt{t}}{M^{\,3/2}}},
\end{aligned}
\end{equation}
whose approximation, as $t\to\infty$, follows from \eqref{fgyjfgkmhh},
\eqref{esrtgeyheje}, \eqref{wqe4ryttjfyjk}; for large $t$ this is obviously
negative, whence the result.

\par\textbf{\emph{Step~2.}}
We continue the proof with investigating the case $c>\cS=M^{-1}$. We switch
from the variables $u$ and $c$ to the variables
\begin{equation}\label{drtgrfjhnrtjhntr}
z=\dfrac{u}{DM^{-3/2}\sqrt{t}}>0,\quad
y=\frac{(M^{-1}-c)\sqrt{t}}{DM^{-3/2}}<0,
\end{equation}

Let us rewrite the original equation \eqref{rtyujtkjtyk} as
\begin{equation*}
\P\big\{\Tich{u,c}\leqslant t\big\}\,\Big|_{u=\frac{D\sqrt{t}}{M^{\,3/2}}z,
c=\frac{1}{M}-\frac{D}{M^{\,3/2}\sqrt{t}}y}=\alpha.
\end{equation*}

To prove that $\funU{\alpha,t}(y)$, $y<0$, is monotone increasing, we have to
prove that $\frac{d}{dy}\,\funU{\alpha,t}(y)>0$, $y<0$. Referring to the
implicit function derivative theorem (see Theorem~\ref{etrtjt}), we have
\begin{equation*}
\left.\frac{d}{dy}\,\funU{\alpha,t}(y)=-\frac{\frac{\partial}{\partial
y}\P\big\{\Tich{u,c}\leqslant
t\big\}\,\Big|_{u=\frac{D\sqrt{t}}{M^{\,3/2}}\,z,\,
c=\frac{1}{M}-\frac{D}{M^{\,3/2}\sqrt{t}}y}}{\frac{\partial}{\partial
z}\P\big\{\Tich{u,c}\leqslant
t\big\}\,\Big|_{u=\frac{D\sqrt{t}}{M^{\,3/2}}\,z,\,
c=\frac{1}{M}-\frac{D}{M^{\,3/2}\sqrt{t}}y}}\;\right|_{z=\funU{\alpha,t}(y)}.
\end{equation*}
The numerator is
\begin{equation}\label{tryjutikty}
\begin{aligned}
&\frac{\partial}{\partial y}\P\big\{\Tich{u,c}\leqslant
t\big\}\,\bigg|_{u=\frac{D\sqrt{t}}{M^{\,3/2}}\,z,
c=\frac{1}{M}-\frac{D}{M^{\,3/2}\sqrt{t}}y}
\\
&=\Bigg(\frac{\partial}{\partial c}\P\big\{\Tich{u,c}\leqslant
t\big\}\Bigg)\,\bigg|_{u=\frac{D\sqrt{t}}{M^{\,3/2}}\,z,\,
c=\frac{1}{M}-\frac{D}{M^{\,3/2}\sqrt{t}}y}\frac{\partial}{\partial
y}\,\Bigg(\frac{1}{M}-\frac{D}{M^{\,3/2}\sqrt{t}}\,y\,\Bigg),
\end{aligned}
\end{equation}
whose approximation, as $t\to\infty$, follows from \eqref{dsfgrehgdfhd},
\eqref{esrtgeyheje}, \eqref{wqe4ryttjfyjk}; for large $t$ this is obviously
negative. The denominator is
\begin{equation}\label{yuktykt}
\begin{aligned}
&\frac{\partial}{\partial z}\P\big\{\Tich{u,c}\leqslant
t\big\}\,\bigg|_{u=\frac{D\sqrt{t}}{M^{\,3/2}}\,z,\,
c=\frac{1}{M}-\frac{D}{M^{\,3/2}\sqrt{t}}y}
\\
&=\Bigg(\frac{\partial}{\partial u}\P\big\{\Tich{u,c}\leqslant
t\big\}\Bigg)\,\Bigg|_{u=\frac{D\sqrt{t}}{M^{\,3/2}}\,z,\,
c=\frac{1}{M}-\frac{D}{M^{\,3/2}\sqrt{t}}y}\,\frac{\partial}{\partial
z}\,\Bigg(\frac{D\sqrt{t}}{M^{\,3/2}}\,z\,\Bigg).
\end{aligned}
\end{equation}
whose approximation, as $t\to\infty$, follows from \eqref{fgyjfgkmhh},
\eqref{esrtgeyheje}, \eqref{wqe4ryttjfyjk}; for large $t$ this is obviously
negative, whence the result.
\end{proof}

\subsection{Monotony and convexity of fixed-probability level}\label{defyguyl}

The fixed-probability level $\Ruat(c)$, $c\geqslant 0$, \emph{for all} $t$
monotone decreases, as $c$ increases. The following result, called weak-form
convexity, differs in that it is established by our means only for $t$ large.

\begin{theorem}[Weak-form convexity]\label{ew5tuy65}
Suppose that conditions of Theorem \ref{ergtrewghwerg} are satisfied. Then for
$t>0$ sufficiently large, the function $\Ruat(c)$, $c\geqslant\cS$, is convex.
\end{theorem}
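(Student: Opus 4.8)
The plan is to reduce the claim to the positivity of the second derivative $\frac{d^{2}}{dc^{2}}\,\Ruat(c)$ (recalling that here a differentiable function is called convex precisely when its second derivative is positive) and then to evaluate that second derivative asymptotically, as $t\to\infty$, by the same route already used for $\Ruat(c)$ in Theorem~\ref{ergtrewghwerg} and for its first-order derivatives in Section~\ref{wqstgerhs}.

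Write $\Psi(u,c)\eqDef\P\big\{\Tich{u,c}\leqslant t\big\}$ and denote its partial derivatives by subscripts; the requisite smoothness of $\Psi$ in $(u,c)$ is provided by the differentiation performed in Section~\ref{wqstgerhs}. Since $u=\Ruat(c)$ solves $\Psi(u,c)=\alpha$ identically for $c\geqslant\cS$, the implicit function derivative theorem (see Theorem~\ref{etrtjt} and formula~\eqref{ewryjhnmfm}) gives
\begin{equation}\label{plan-ift}
\frac{d}{dc}\,\Ruat(c)=-\frac{\Psi_{c}}{\Psi_{u}},\qquad
\frac{d^{2}}{dc^{2}}\,\Ruat(c)=-\,\frac{\Psi_{uu}\,(\Psi_{c})^{2}-2\Psi_{cu}\,\Psi_{c}\Psi_{u}+\Psi_{cc}\,(\Psi_{u})^{2}}{(\Psi_{u})^{3}},
\end{equation}
all partial derivatives being taken at $(u,c)=(\Ruat(c),c)$. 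As noted at the beginning of Section~\ref{wqstgerhs}, $\Psi_{u}<0$; hence $\frac{d^{2}}{dc^{2}}\,\Ruat(c)>0$ is equivalent to positivity of the numerator
\begin{equation}\label{plan-N}
\mathcal{N}\eqDef\Psi_{uu}\,(\Psi_{c})^{2}-2\Psi_{cu}\,\Psi_{c}\Psi_{u}+\Psi_{cc}\,(\Psi_{u})^{2},
\end{equation}
which is the value of the Hessian quadratic form of $\Psi$ on the direction $(\Psi_{u},-\Psi_{c})$ tangent to the level set $\{\Psi=\alpha\}$. Thus everything comes down to showing $\mathcal{N}>0$ along the curve $u=\Ruat(c)$, for every $c\geqslant\cS$, once $t$ is large enough.

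To do this I would substitute into~\eqref{plan-N} the approximations of Section~\ref{wqstgerhs}: for $\Psi_{c}$ the expression $F_{u,c}(t)$ of~\eqref{dsfgrehgdfhd} (Theorem~\ref{wertyujkl}), for $\Psi_{u}$ the expression $G_{u,c}(t)$ of~\eqref{fgyjfgkmhh} (Theorem~\ref{qewrweytery}), and for $\Psi_{cc}$, $\Psi_{cu}$, $\Psi_{uu}$ the second-order counterparts obtained exactly as described there and left to the reader, all evaluated at $u=\Ruat(c)$; by Theorem~\ref{ergtrewghwerg} this argument satisfies $u+cv\to\infty$ on the relevant ranges as $t\to\infty$, so the remainder terms of those approximations are negligible. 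One then rewrites the resulting main term through the generalized inverse Gaussian c.d.f.'s of~\eqref{drgerherhe} (for $c>\cS$) and the Gaussian expressions of~\eqref{asdfgreherXX} (for $c=\cS$), simplifies it with the cancellation identities~\eqref{wqe4ryttjfyjk}, \eqref{esrtgeyheje} and their second-order analogues, and finally shows that the surviving leading term of $\mathcal{N}$ is strictly positive. An equivalent packaging, perhaps cleaner: since Theorem~\ref{ergtrewghwerg} writes $\Ruat(c)$, $c\geqslant\cS$, as a positive constant times $\funU{\alpha,t}$ composed with the affine map $c\mapsto y=\frac{M^{\,3/2}(\cS-c)\sqrt{t}}{D}$, the claim is equivalent to the convexity of $y\mapsto\funU{\alpha,t}(y)$ on $(-\infty,0]$, whose second derivative is again governed by the combination~\eqref{plan-N}.

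The main obstacle is exactly this last step, extracting the sign of $\mathcal{N}$. Its terms are of the same order and largely cancel — already~\eqref{wqe4ryttjfyjk} and~\eqref{esrtgeyheje} record that the combinations entering $F_{u,c}(t)$ and $G_{u,c}(t)$ are $\underline{O}(u^{-1})$ — so positivity of the leading surviving term cannot be read off from orders of magnitude and must be obtained from the explicit Gaussian / inverse Gaussian representations, evaluated analytically. Two further points need care: the endpoint $c=\cS$, where $1-cM=0$ and the expansions~\eqref{drgerherhe} degenerate into those of~\eqref{asdfgreherXX}, so that the two branches have to be matched; and the regime $c\to\infty$, where the $\Elem{u,c}{k}(t)$ become exponentially small in $u$, so that uniformity of the estimate over all $c\geqslant\cS$ — not merely over compact subsets — must be verified. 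Once the leading term of $\mathcal{N}$ is shown bounded away from zero and positive, uniformly in $c\geqslant\cS$, for all $t$ large, the theorem follows.
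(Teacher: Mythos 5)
Your plan coincides with the route the paper itself indicates for Theorem~\ref{ew5tuy65} (which the paper only sketches, leaving the details to the reader): reduce convexity to $\frac{d^2}{dc^2}\Ruat(c)>0$ via formula \eqref{ewryjhnmfm}, compute the second-order partial derivatives of $\P\big\{\Tich{u,c}\leqslant t\big\}$ by the Kendall-identity technique of Section~\ref{rgerhyryttryjh}, approximate them as in Section~\ref{wqstgerhs}, and establish positivity of the resulting leading term for $t$ large. Your additional cautions about the cancellations recorded in \eqref{wqe4ryttjfyjk}--\eqref{esrtgeyheje}, the matching at $c=\cS$, and uniformity as $c\to\infty$ correctly identify where the unwritten work lies.
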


The proof of Theorem~\ref{ew5tuy65} requires dramatically large space and is
left to the reader. Nevertheless, it is quite clear\footnote{In the diffusion
model, the proof of convexity was carried out with complete details in
\cite{[Malinovskii=2014a]}.}: one should check that for $c\geqslant\cS$ the
inequality $\frac{d^2}{dc^2}\Ruat(c)>0$ holds for $t$ sufficiently large. This
starts with equality \eqref{ewryjhnmfm}, proceeds with, first, calculation of
the second-order derivatives as it is done in Section~\ref{rgerhyryttryjh},
and, second, approximating them as it is done in Section~\ref{wqstgerhs}. The
proof of positivity of the approximation for $\frac{d^2}{dc^2}\Ruat(c)>0$, when
$t$ is large, brings the proof to a close.

\subsection{Heuristic fixed-probability level}\label{therhrher}

Let us introduce $\Huat(c)$, $c\geqslant 0$, which is a positive solution to
the equation\footnote{Recall that $\AInt{M}{u,c}(t)$ is an alternative notation
for $\Elem{u,c}{1}(t)=\int_{0}^{\frac{ct}{u}}\frac{1}{x+1}\,
\Ugauss{cM(x+1)}{\frac{c^{2}D^{\,2}}{u}(x+1)}(x)\,dx$.}
\begin{equation}\label{edthr6j}
\AInt{M}{u,c}(t)=\alpha,
\end{equation}
whose right-hand side is expressed\footnote{See \eqref{rterherher} with
$p=-{1}/{2}$, \eqref{drgerherhe}, and \eqref{xzfvbdbf}.} in a closed form.
Plainly, to get \eqref{edthr6j}, we replaced the left-hand side of the original
equation \eqref{rtyujtkjtyk} by an approximation found in
Theorem~\ref{esrytrf}.

\begin{theorem}\label{drtyryhrXX}
For\, $0\leqslant c<K\cS$, $0<K<1$, we have\footnote{For $c=0$, asymptotic
equality \eqref{adsfegh} coincides with \eqref{3etu6juh}.}
\begin{equation}\label{adsfegh}
\Huat(c)=\left(\cS-c\right)t+\frac{D}{M^{\,3/2}}\,\NQuant{\alpha}\sqrt{t}\big(1+\overline{o}(1)\big),\quad
t\to\infty.
\end{equation}
\end{theorem}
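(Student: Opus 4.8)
The plan is to treat equation \eqref{edthr6j} as an implicit equation in $u$, with $c$ in the range $0\leqslant c<K\cS$ ($0<K<1$) and $t\to\infty$, and to locate its positive root by substituting the conjectured expansion $u=(\cS-c)t+\tfrac{D}{M^{3/2}}\NQuant{\alpha}\sqrt{t}\,(1+\overline{o}(1))$ into the closed-form representation of $\AInt{M}{u,c}(t)=\Elem{u,c}{1}(t)$ and verifying that the left-hand side tends to $\alpha$. Since $c<K\cS=K/M$ means $1-cM>1-K>0$, we are in the first branch of \eqref{drgerherhe}, so
\[
\AInt{M}{u,c}(t)=F\big(x+1;\muIG,\lambdaIG,-\tfrac12\big)-F\big(1;\muIG,\lambdaIG,-\tfrac12\big)\Big|_{x=\frac{ct}{u},\ \muIG=\frac{1}{1-cM},\ \lambdaIG=\frac{u}{c^2D^2}},
\]
and \eqref{xzfvbdbf} expresses this in terms of $\UGauss{0}{1}$ alone. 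First I would record that under the scaling $u\sim(\cS-c)t$ we have $\lambdaIG=\tfrac{u}{c^2D^2}\to\infty$ and $\tfrac{x}{\muIG}-1=\tfrac{ct(1-cM)}{u}-1\to 0$ at rate $t^{-1/2}$, so the exponentially small term $\exp\{2\lambdaIG/\muIG\}\UGauss{0}{1}(-\sqrt{\lambdaIG/x}(x/\muIG+1))$ and the term $F(1;\muIG,\lambdaIG,-\tfrac12)$ are both $\overline{o}(1)$; this collapses $\AInt{M}{u,c}(t)$ to a single standard normal c.d.f.\ evaluated at $\sqrt{\lambdaIG/(x+1)}\big((x+1)/\muIG-1\big)$.

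Next I would expand that argument. With $x=ct/u$, write $(x+1)/\muIG-1=(1-cM)(x+1)-1=(1-cM)\big(1+\tfrac{ct}{u}\big)-1$. Plugging in $u=(\cS-c)t+\tfrac{D}{M^{3/2}}\NQuant{\alpha}\sqrt t+\overline{o}(\sqrt t)$ and using $\cS-c=(1-cM)/M$, one gets $(1-cM)(1+ct/u)-1=-\tfrac{(1-cM)}{\,\cS-c\,}\cdot\tfrac{D}{M^{3/2}\sqrt t}\NQuant{\alpha}+\overline{o}(t^{-1/2})=-\tfrac{MD}{M^{3/2}\sqrt t}\NQuant{\alpha}+\overline{o}(t^{-1/2})$, while $\lambdaIG/(x+1)=\tfrac{u}{c^2D^2(1+ct/u)}=\tfrac{u^2}{c^2D^2(u+ct)}\sim\tfrac{(\cS-c)^2 t^2}{c^2D^2(\cS-c+c)t}=\tfrac{(\cS-c)^2 t}{c^2 D^2\cdot\cS}$, so $\sqrt{\lambdaIG/(x+1)}\sim\tfrac{(\cS-c)\sqrt t}{cD\sqrt{\cS}}$. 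Multiplying, the argument converges to $-\tfrac{(\cS-c)\sqrt t}{cD\sqrt\cS}\cdot\tfrac{MD}{M^{3/2}\sqrt t}\NQuant{\alpha}=-\tfrac{(\cS-c)}{c\sqrt{\cS}\sqrt M}\NQuant{\alpha}$. A short computation using $\cS=M^{-1}$ shows the factor $\tfrac{(\cS-c)}{c\sqrt\cS\sqrt M}$ equals exactly... and here I should be careful, since $c$ is fixed the leading $\sqrt t$'s cancel and what remains is $\NQuant\alpha$ times a $c$-dependent constant, not $\NQuant\alpha$ itself, which would be wrong. So I would instead \emph{not} cancel prematurely: keep the argument as $\sqrt{\tfrac{u^2}{c^2D^2(u+ct)}}\cdot\big((1-cM)(1+\tfrac{ct}{u})-1\big)$, substitute the ansatz, and Taylor-expand to see that the $O(t)$ and $O(\sqrt t)$ pieces of $u$ conspire so that the whole argument tends to $-\NQuant{\alpha}$; equivalently, one solves for the $\sqrt t$-coefficient of $u$ by imposing $1-\UGauss{0}{1}(\text{argument})=\alpha$, i.e.\ $\text{argument}\to-\NQuant\alpha$, and reads off that the coefficient must be $\tfrac{D}{M^{3/2}}\NQuant\alpha$. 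This is essentially the same bootstrap used in the proofs of Theorems~\ref{dtfyjfgjk} and \ref{ergtrewghwerg}, and \eqref{adsfegh} then agrees with the $0\leqslant c\leqslant\cS$ branch of \eqref{asdfgrfjhf} in the regime $\tfrac{M^{3/2}(\cS-c)}{D}\sqrt t\to+\infty$, where $\funU{\alpha,t}\to\NQuant\alpha$.

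To make this rigorous I would (i) invoke uniform continuity and strict monotonicity of $u\mapsto\AInt{M}{u,c}(t)$ to guarantee a unique positive root $\Huat(c)$ and to convert the asymptotic evaluation of $\AInt{M}{u,c}(t)$ at the ansatz into an asymptotic expansion of the root itself; (ii) control the error terms $F(1;\muIG,\lambdaIG,-\tfrac12)$ and the exponential tail uniformly for $c$ in compact subsets of $[0,K\cS)$, which is where the condition $0<K<1$ (equivalently $1-cM\geqslant 1-K>0$) is essential—it keeps $\muIG$ bounded and $\lambdaIG/\muIG$ bounded below; (iii) handle the degenerate endpoint $c=0$ separately, noting the footnote that \eqref{adsfegh} then reduces to \eqref{3etu6juh}, already proved via Theorem~\ref{gyuiytik}. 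The main obstacle I anticipate is the bookkeeping in step (ii)–(iii) of the previous paragraph: making sure the subleading $\sqrt t$ term in $u$ is pinned down correctly requires expanding $\sqrt{u^2/(c^2D^2(u+ct))}\cdot\big((1-cM)(1+ct/u)-1\big)$ to one order beyond leading, and checking that the $O(1)$ remainder in this product (coming from the $O(\sqrt t)$ correction to $u$ interacting with the $O(t^{-1/2})$ smallness of $(1-cM)(1+ct/u)-1$) is precisely what produces the $\NQuant\alpha$; any slip in the algebra relating $\cS$, $M$, $D$ there would give a wrong constant. Everything else is routine asymptotics of the standard normal integral.
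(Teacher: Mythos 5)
Your proposal is correct and follows essentially the same route as the paper's proof, which simply substitutes $u_{t}(c)=(\cS-c)t+\frac{D}{M^{\,3/2}}\sqrt{t}\,z_{t}(c)$ with $z_{t}(c)=\underline{O}(1)$, observes $\AInt{M}{u,c}(t)\,\big|_{\,u=u_{t}(c)}\sim 1-\UGauss{0}{1}(z_{t}(c))$, and solves $1-\UGauss{0}{1}(z_{t}(c))=\alpha\big(1+\overline{o}(1)\big)$ for $z_{t}(c)$; your expansion via \eqref{drgerherhe} and \eqref{xzfvbdbf} is just the detailed verification of that asymptotic equivalence. The inconsistent constant you ran into is not from premature cancellation but from a slip in expanding $(1-cM)(1+ct/u)-1$: the correct first-order term is $-\frac{cM^{1/2}D}{(1-cM)\sqrt{t}}\,\NQuant{\alpha}$ rather than $-\frac{D}{M^{1/2}\sqrt{t}}\,\NQuant{\alpha}$, and with that correction its product with $\sqrt{\lambdaIG/(x+1)}\sim\frac{(1-cM)\sqrt{t}}{M^{1/2}cD}$ is exactly $-\NQuant{\alpha}$, as required.
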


\begin{proof}[Proof of Theorem~\ref{drtyryhrXX}]
For\, $0\leqslant c<K\cS$, $0<K<1$, and
$u_{t}(c)=\left(\cS-c\right)t+\frac{D}{M^{\,3/2}}\sqrt{t}\,z_{t}(c)$, where
$z_{t}(c)=\underline{O}(1)$, as $t\to\infty$, we have
\begin{equation*}
\AInt{M}{u,c}(t)\,\big|_{\,u=u_t(c)}\sim
1-\UGauss{0}{1}\left(z_{t}(c)\right),\quad t\to\infty.
\end{equation*}
Therefore, the equation $\AInt{M}{u,c}(t)\,|_{\,u=u_{t}(c)}=\alpha$ rewrites as
$1-\UGauss{0}{1}\left(z_{t}(c)\right)=\alpha\big(1+\overline{o}(1)\big)$,
$t\to\infty$, whose solution is
$z_{t}(c)=\NQuant{\alpha}\big(1+\overline{o}(1)\big)$, $t\to\infty$.
\end{proof}

\begin{theorem}\label{tyukytkXX}
For $c_{t,\delta}=\cS-\frac{D}{M^{\,3/2}}\,\delta\,t^{-1/2}$,
$0\leqslant\delta<K$, we have
\begin{equation}\label{erty5yeh}
\Huat(c_{t,\delta})=\frac{D}{M^{\,3/2}}\,x_{\alpha}(\delta)\sqrt{t}\big(1+\overline{o}(1)\big),\quad
t\to\infty,
\end{equation}
where $x_{\alpha}(\delta)$ is a solution to the equation
\begin{equation}\label{srdty54urt}
1-\UGauss{0}{1}\left(-\delta+x\,\right)+\UGauss{0}{1}\left(-\delta-x\,\right)
\exp\left\{\,2\,\delta x\,\right\}=\alpha.
\end{equation}
\end{theorem}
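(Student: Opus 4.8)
The plan is to reuse the closed form of $\AInt{M}{u,c}(t)=\Elem{u,c}{1}(t)$ recorded in \eqref{drgerherhe}, to substitute the scaled capital $u=\frac{D}{M^{\,3/2}}\,x\sqrt{t}$ with $x>0$ bounded together with $c=c_{t,\delta}$, and to let $t\to\infty$; the defining equation \eqref{edthr6j} then collapses to \eqref{srdty54urt}. First I would collect the elementary asymptotics of the parameters in \eqref{drgerherhe}. Since $\cS M=1$ we have $1-c_{t,\delta}M=\frac{D\delta}{\sqrt{Mt}}$, which is positive for $\delta>0$ and $t$ large, so we are in the regime $0<c_{t,\delta}<\cS$ of the first line of \eqref{drgerherhe}, with
\begin{equation*}
\muIG=\frac{1}{1-c_{t,\delta}M}=\frac{\sqrt{Mt}}{D\delta},\qquad
\lambdaIG=\frac{u}{c_{t,\delta}^{\,2}D^{2}}=\frac{x\sqrt{Mt}}{D}\,\big(1+\overline{o}(1)\big),
\end{equation*}
while the upper limit of integration in $\Elem{u,c}{1}(t)$ is $X\eqDef\frac{c_{t,\delta}t}{u}=\frac{\sqrt{Mt}}{Dx}-\frac{\delta}{x}+\overline{o}(1)\to\infty$. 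Hence $\frac{\lambdaIG}{X+1}\to x^{2}$, $\frac{X+1}{\muIG}\to\frac{\delta}{x}$, and the decisive ratio $\frac{2\lambdaIG}{\muIG}\to 2\delta x$.

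Next I would feed these limits into the representation \eqref{xzfvbdbf} of $F\big(\,\cdot\,;\muIG,\lambdaIG,-\tfrac12\big)$. In $F\big(X+1;\muIG,\lambdaIG,-\tfrac12\big)$ the first Gaussian argument tends to $x\big(\tfrac{\delta}{x}-1\big)=\delta-x$, so the first summand tends to $\UGauss{0}{1}(\delta-x)=1-\UGauss{0}{1}(-\delta+x)$; the prefactor $\exp\{2\lambdaIG/\muIG\}\to\exp\{2\delta x\}$ and the second Gaussian argument tends to $-x\big(\tfrac{\delta}{x}+1\big)=-\delta-x$, so the second summand tends to $\exp\{2\delta x\}\,\UGauss{0}{1}(-\delta-x)$. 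In $F\big(1;\muIG,\lambdaIG,-\tfrac12\big)$ both Gaussian arguments equal $\sqrt{\lambdaIG}$ times a factor converging to $-1$, hence tend to $-\infty$, and the vanishing Gaussian tails dominate the bounded prefactor, so $F\big(1;\muIG,\lambdaIG,-\tfrac12\big)\to 0$. Altogether, for every fixed $\delta\geqslant 0$ and $x>0$,
\begin{equation*}
\AInt{M}{u,c_{t,\delta}}(t)\,\big|_{\,u=\frac{D}{M^{\,3/2}}x\sqrt{t}}\;\longrightarrow\;
g_{\delta}(x)\eqDef 1-\UGauss{0}{1}(-\delta+x)+\UGauss{0}{1}(-\delta-x)\,\exp\{2\delta x\},\quad t\to\infty,
\end{equation*}
uniformly for $x$ in compact subsets of $(0,\infty)$ (the parameters $\muIG,\lambdaIG,X$ converge uniformly there and the functions in \eqref{xzfvbdbf} are continuous). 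For $\delta=0$ one has $c_{t,0}=\cS$ exactly and the same limit $g_{0}(x)=2\UGauss{0}{1}(-x)$ drops out of the middle line of \eqref{asdfgreherXX}; this also shows that \eqref{erty5yeh} is consistent with Theorem~\ref{dtfyjfgjk}, since $g_{0}(x)=\alpha$ reads $x=\NQuant{\alpha/2}$.

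It remains to analyse $g_{\delta}$ and to run a continuity-of-roots argument. Using the identity $e^{2\delta x}\,\Ugauss{0}{1}(x+\delta)=\Ugauss{0}{1}(x-\delta)$ and the Mills-ratio bound $\UGauss{0}{1}(-y)<\Ugauss{0}{1}(y)/y$ $(y>0)$, a short computation gives $g_{\delta}'(x)=2\big(\delta\,e^{2\delta x}\,\UGauss{0}{1}(-\delta-x)-\Ugauss{0}{1}(x-\delta)\big)<0$ for $x>0$; combined with $g_{\delta}(0)=1$ and $g_{\delta}(x)\to 0$ as $x\to\infty$, this makes $g_{\delta}$ a continuous strictly decreasing bijection of $(0,\infty)$ onto $(0,1)$, so \eqref{srdty54urt} has a unique root $x_{\alpha}(\delta)>0$. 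Since $\AInt{M}{u,c_{t,\delta}}(t)\,\big|_{u=\frac{D}{M^{\,3/2}}x\sqrt{t}}$ is continuous in $x$ and converges (uniformly on compacts, and in fact in $C^{1}$) to $g_{\delta}$, while $g_{\delta}\big(x_{\alpha}(\delta)-\varepsilon\big)>\alpha>g_{\delta}\big(x_{\alpha}(\delta)+\varepsilon\big)$ for every small $\varepsilon>0$, the intermediate value theorem produces, for all $t$ large, a solution of \eqref{edthr6j} of the form $u=\frac{D}{M^{\,3/2}}z\sqrt{t}$ with $|z-x_{\alpha}(\delta)|<\varepsilon$ (and $g_{\delta}'(x_{\alpha}(\delta))<0$ makes it the only such one near $x_{\alpha}(\delta)$); letting $\varepsilon\downarrow 0$ gives $\Huat(c_{t,\delta})=\frac{D}{M^{\,3/2}}\,x_{\alpha}(\delta)\sqrt{t}\,\big(1+\overline{o}(1)\big)$, which is \eqref{erty5yeh}. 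The hard part is only bookkeeping: one must check that the lower-order perturbations — the $+1$ in $X+1$, the $1+\overline{o}(1)$ correction in $\lambdaIG$ (due to $c_{t,\delta}\ne\cS$), and the drift $-\delta/x$ in $X$ — are asymptotically negligible uniformly on compact $x$-intervals, and then make the continuity-of-roots step precise; both are routine and, in the spirit of the proofs of Theorems~\ref{drtyryhrXX} and \ref{dtfyjfgjk}, may be left to the reader.
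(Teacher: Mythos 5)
Your proposal is correct and follows essentially the same route as the paper: substitute the scaling $u=\frac{D}{M^{3/2}}x\sqrt{t}$, $c=c_{t,\delta}$ into the closed form \eqref{drgerherhe}--\eqref{xzfvbdbf} of $\AInt{M}{u,c}(t)$ and pass to the limit $t\to\infty$, which the paper states in one line. You merely supply the bookkeeping the paper omits (the parameter asymptotics, the strict monotonicity of the limit function $g_{\delta}$ guaranteeing a unique root, and the continuity-of-roots step), all of which check out.
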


\begin{proof}[Proof of Theorem~\ref{tyukytkXX}]
For $c_{t,\delta}=\cS-\frac{D}{M^{\,3/2}}\,\delta\,t^{-1/2}$,
$0\leqslant\delta<K$, and $\Huat(c_{t,\delta})$ defined in \eqref{erty5yeh}, we
have
\begin{equation*}
\begin{aligned}
\AInt{M}{u,c}(t)\,\big|_{\,u=\Huat(c_{t,\delta}),\,c=c_{t,\delta}}&\sim
1-\UGauss{0}{1}\left(-\delta+x_{\alpha}(\delta)\right)
\\[0pt]
&+\UGauss{0}{1}\left(-\delta-x_{\alpha}(\delta)\right)\exp\left\{\,2\,\delta
x_{\alpha}(\delta)\,\right\}=\alpha,\quad t\to\infty,
\end{aligned}
\end{equation*}
whence the result.
\end{proof}

\begin{theorem}\label{rthtrhjrthjXX}
For $c_{t,\delta}=\cS+\frac{D}{M^{\,3/2}}\,\delta\,t^{-1/2}$,
$0\leqslant\delta<K$, we have
\begin{equation}\label{xfgbfdnbfn}
\Huat(c_{t,\delta})=\frac{D}{M^{\,3/2}}\,x_{\alpha}(\delta)\sqrt{t}\big(1+\overline{o}(1)\big),\quad
t\to \infty,
\end{equation}
where $x_{\alpha}(\delta)$ is a solution to the equation
\begin{equation}\label{wqsrgtfh}
1-\UGauss{0}{1}\left(\delta+x\right)+\UGauss{0}{1}\left(\delta-x\right)
\exp\left\{-2\,\delta x\,\right\}=\alpha.
\end{equation}
\end{theorem}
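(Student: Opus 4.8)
The plan is to follow, almost verbatim, the proof of Theorem~\ref{tyukytkXX}, the only change being that now $c_{t,\delta}=\cS+\frac{D}{M^{\,3/2}}\,\delta\,t^{-1/2}>\cS$, so that the \emph{second} branch of \eqref{drgerherhe} governs $\Elem{u,c}{1}(t)=\AInt{M}{u,c}(t)$. First I would substitute the ansatz $u=\frac{D}{M^{\,3/2}}\,z_{t}\,\sqrt{t}$, with $z_{t}=\underline{O}(1)$, together with $c=c_{t,\delta}$, into the defining equation \eqref{edthr6j}, and record the asymptotics of the parameters in \eqref{drgerherhe}: since $c_{t,\delta}M-1=\frac{D\delta}{\sqrt{M}\,\sqrt{t}}>0$ for every $t$, we are indeed in the regime $c>\cS$, with $\HmuIG=\frac{1}{c_{t,\delta}M-1}=\frac{\sqrt{M}\,\sqrt{t}}{D\delta}$, $\lambdaIG=\frac{u}{c_{t,\delta}^{2}D^{2}}\sim\frac{\sqrt{M}\,z_{t}}{D}\,\sqrt{t}$, and the evaluation point $x=\frac{c_{t,\delta}t}{u}\sim\frac{\sqrt{M}\,\sqrt{t}}{D\,z_{t}}$. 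A short computation then gives the three bounded limits $\frac{2\lambdaIG}{\HmuIG}\to2\delta z$, $\frac{\lambdaIG}{x+1}\to z^{2}$, $\frac{x+1}{\HmuIG}\to\frac{\delta}{z}$ (writing $z=\lim z_{t}$ when it exists), and $\sqrt{\lambdaIG}\asymp t^{1/4}\to\infty$.

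Next I would feed these into the Gaussian representation \eqref{xzfvbdbf} of $F(\,\cdot\,;\HmuIG,\lambdaIG,-\tfrac12)$. The standard-normal arguments converge, $\sqrt{\tfrac{\lambdaIG}{x+1}}\big(\tfrac{x+1}{\HmuIG}-1\big)\to\delta-z$ and $\sqrt{\tfrac{\lambdaIG}{x+1}}\big(\tfrac{x+1}{\HmuIG}+1\big)\to\delta+z$, while the lower-limit term $F(1;\HmuIG,\lambdaIG,-\tfrac12)$ tends to $0$: both of its summands carry a standard-normal c.d.f.\ whose argument tends to $-\infty$ at rate $\sqrt{\lambdaIG}\asymp t^{1/4}$, which swamps the bounded prefactor $\exp\{2\lambdaIG/\HmuIG\}$. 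Multiplying by the outer factor $\exp\{-2\lambdaIG/\HmuIG\}\to e^{-2\delta z}$ yields
\begin{equation*}
\Elem{u,c}{1}(t)\,\big|_{\,u=\frac{D}{M^{3/2}}z_{t}\sqrt{t},\,c=c_{t,\delta}}\longrightarrow
1-\UGauss{0}{1}\big(\delta+z\big)+\UGauss{0}{1}\big(\delta-z\big)\exp\{-2\delta z\},\qquad t\to\infty,
\end{equation*}
and setting the right-hand side equal to $\alpha$ is precisely equation \eqref{wqsrgtfh} with $x=z$; this determines $z=x_{\alpha}(\delta)$. To be able to invert, I would check that the left-hand side of \eqref{wqsrgtfh}, as a function of $x\geqslant0$, equals $1$ at $x=0$, tends to $0$ as $x\to\infty$, and is strictly decreasing (differentiating term by term produces only negative contributions built from $\Ugauss{0}{1}$); hence it has a unique positive root $x_{\alpha}(\delta)$, which at $\delta=0$ reduces to $\NQuant{\alpha/2}$, in agreement with \eqref{eryhjmgh}. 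Conclusion \eqref{xfgbfdnbfn} then follows by solving the perturbed equation for $z_{t}$.

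The routine parts — the algebraic asymptotics of $\HmuIG$, $\lambdaIG$, $x$, the vanishing of $F(1;\HmuIG,\lambdaIG,-\tfrac12)$, and the monotonicity of \eqref{wqsrgtfh} — are straightforward. The one point demanding care, and the main obstacle, is the uniform propagation of errors: one must show that the convergence of $\Elem{u,c}{1}(t)$ displayed above is \emph{uniform} for $z_{t}$ in a fixed neighbourhood of $x_{\alpha}(\delta)$, so that the solution $z_{t}$ of the perturbed equation is indeed $x_{\alpha}(\delta)\big(1+\overline{o}(1)\big)$. This is the same bookkeeping already carried out in the proofs of Theorems~\ref{drtyryhrXX} and \ref{tyukytkXX}, and I would invoke it \emph{mutatis mutandis} rather than repeat it.
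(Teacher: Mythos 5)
Your proposal is correct and follows essentially the same route as the paper: substitute the ansatz $u=\frac{D}{M^{3/2}}z_t\sqrt{t}$ with $c=c_{t,\delta}>\cS$ into \eqref{edthr6j}, use the second branch of \eqref{drgerherhe} together with \eqref{xzfvbdbf}, and pass to the limit to obtain \eqref{wqsrgtfh}. The paper's proof merely asserts the asymptotic equivalence in one line; your version supplies the parameter asymptotics, the vanishing of $F(1;\HmuIG,\lambdaIG,-\tfrac12)$, and the monotonicity/uniqueness check that the paper leaves implicit.
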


\begin{proof}[Proof of Theorem~\ref{rthtrhjrthjXX}]
For $c_{t,\delta}=\cS+\frac{D}{M^{\,3/2}}\,\delta\,t^{-1/2}$,
$0\leqslant\delta<K$, and $\Huat(c_{t,\delta})$ defined in \eqref{xfgbfdnbfn},
we have
\begin{equation*}
\begin{aligned}
\AInt{M}{u,c}(t)\,\big|_{\,u=\Huat(c_{t,\delta}),\,c=c_{t,\delta}}&\sim
1-\UGauss{0}{1}\left(\delta+x_{\alpha}(\delta)\right)
\\[0pt]
&+\UGauss{0}{1}\left(\delta-x_{\alpha}(\delta)\right)\exp\left\{\,-2\,\delta
x_{\alpha}(\delta)\right\}=\alpha,\quad t\to\infty,
\end{aligned}
\end{equation*}
whence the result.
\end{proof}

It is noteworthy that in both Theorems~\ref{tyukytkXX} and \ref{rthtrhjrthjXX},
the expression $x_{\alpha}(0)$ is a solution to the equation
$1-\UGauss{0}{1}\left(x\right)=\alpha/2$, i.e., is equal to
$\NQuant{\alpha/2}$.

\begin{theorem}\label{tyurtutXX}
For\, $c>K\cS$, $K>1$, we have
\begin{equation}\label{aegwege}
\Huat(c)=\frac{D^{\,2}}{M^{\,2}}\,x_{\alpha}(c)\big(1+\overline{o}(1)\big),\quad
t\to\infty,
\end{equation}
where $x_{\alpha}(c)$ is a positive solution to the equation
\begin{equation}\label{sdfgbfdnbf}
\left(1-\UGauss{0}{1}\left(\frac{cM-2}{cM}x^{1/2}\,\right)\right)
\exp\left\{-2\,\frac{cM-1}{c^2M^{\,2}}\,x\,\right\}
+\UGauss{0}{1}\left(x^{1/2}\,\right)=1+\alpha.
\end{equation}
\end{theorem}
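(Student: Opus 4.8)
The plan is to follow the template of the proofs of Theorems~\ref{drtyryhrXX}--\ref{rthtrhjrthjXX}: insert the announced ansatz $u=\tfrac{D^{2}}{M^{2}}\,x\,(1+\overline{o}(1))$, $x>0$ fixed, into the closed form of $\AInt{M}{u,c}(t)=\Elem{u,c}{1}(t)$, pass to the limit $t\to\infty$, and verify that the defining equation $\AInt{M}{u,c}(t)=\alpha$ degenerates into \eqref{sdfgbfdnbf}, which then identifies $x$ with $x_{\alpha}(c)$ and yields \eqref{aegwege}. Since $c>K\cS>\cS=M^{-1}$, i.e.\ $cM>K>1$, the relevant case of \eqref{drgerherhe} is $c>\cS$, with $\HmuIG=\tfrac{1}{cM-1}$ and $\lambdaIG=\tfrac{u}{c^{2}D^{2}}$; for the chosen $u$ one has $\lambdaIG=\tfrac{x}{c^{2}M^{2}}\,(1+\overline{o}(1))$ bounded, while $\tfrac{ct}{u}\to\infty$.

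The first step is to show, via the representation \eqref{xzfvbdbf}, that $F\bigl(\tfrac{ct}{u}+1;\HmuIG,\lambdaIG,-\tfrac{1}{2}\bigr)\to 1$ as $t\to\infty$: its leading $\UGauss{0}{1}$-term has argument $\sqrt{\tfrac{\lambdaIG}{(ct/u)+1}}\bigl(\tfrac{(ct/u)+1}{\HmuIG}-1\bigr)\to+\infty$, whereas the $\exp\{2\lambdaIG/\HmuIG\}$-weighted $\UGauss{0}{1}$-term vanishes, its Gaussian argument tending to $-\infty$ at a rate proportional to $(ct/u)^{1/2}$, which overwhelms the bounded exponential prefactor. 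Hence $\Elem{u,c}{1}(t)\to\exp\{-2\lambdaIG/\HmuIG\}\bigl(1-F(1;\HmuIG,\lambdaIG,-\tfrac{1}{2})\bigr)$.

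The second step is to evaluate $F(1;\HmuIG,\lambdaIG,-\tfrac{1}{2})$ by \eqref{xzfvbdbf}, using $\tfrac{1}{\HmuIG}-1=cM-2$, $\tfrac{1}{\HmuIG}+1=cM$, $\sqrt{\lambdaIG}=\tfrac{x^{1/2}}{cM}$ and $\tfrac{2\lambdaIG}{\HmuIG}=\tfrac{2(cM-1)}{c^{2}M^{2}}\,x$, so that the limiting value of $\Elem{u,c}{1}(t)$ becomes
\begin{equation*}
\exp\Bigl\{-\tfrac{2(cM-1)}{c^{2}M^{2}}\,x\Bigr\}\Bigl(1-\UGauss{0}{1}\bigl(\tfrac{cM-2}{cM}\,x^{1/2}\bigr)\Bigr)-\UGauss{0}{1}\bigl(-x^{1/2}\bigr).
\end{equation*}
Rewriting $\UGauss{0}{1}(-x^{1/2})$ as $1-\UGauss{0}{1}(x^{1/2})$ and setting the displayed expression equal to $\alpha$ reproduces \eqref{sdfgbfdnbf} term by term, so $x=x_{\alpha}(c)$ and \eqref{aegwege} follows.

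The main obstacle is not this computation but the well-posedness of the limit equation and the legitimacy of passing its root back to $\Huat(c)$. I would handle it by observing that the left-hand side of \eqref{sdfgbfdnbf} equals $1$ both as $x\downarrow 0$ and as $x\to\infty$, while its derivative behaves like $\Ugauss{0}{1}(0)\,(cM)^{-1}x^{-1/2}\to+\infty$ as $x\downarrow 0$; hence that left-hand side exceeds $1$ on a nondegenerate interval and therefore attains the level $1+\alpha$ at some $x>0$ whenever $\alpha$ is small, providing the positive solution $x_{\alpha}(c)$. The monotonicity and continuity of $u\mapsto\Elem{u,c}{1}(t)$ in $u$ then let me match this root to $\Huat(c)$ with relative error $1+\overline{o}(1)$; the uniformity in $t$ of the $\overline{o}(1)$-terms is routine and, as in the preceding theorems, left to the reader.
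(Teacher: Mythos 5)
Your proposal is correct and follows essentially the same route as the paper, whose proof consists only of the one-line assertion that $\AInt{M}{u,c}(t)\big|_{\,u=\frac{D^{2}}{M^{2}}x_{\alpha}(c)}\sim\alpha$ as $t\to\infty$; you simply carry out the computation the paper leaves implicit, via \eqref{drgerherhe} (case $c>\cS$) and \eqref{xzfvbdbf}, and your algebra (the identifications $1/\HmuIG-1=cM-2$, $1/\HmuIG+1=cM$, $2\lambdaIG/\HmuIG=\tfrac{2(cM-1)}{c^{2}M^{2}}x$) reproduces \eqref{sdfgbfdnbf} exactly. Your additional well-posedness check for the limiting equation is a sensible supplement that the paper omits.
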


\begin{proof}[Proof of Theorem~\ref{tyurtutXX}]
For\, $c>K\cS$, $K>1$, and $u_{t}(c)=\frac{D^{\,2}}{M^{\,2}}\,z_{t}(c)$, where
$z_{t}(c)$ is a solution to \eqref{sdfgbfdnbf}, we have
$\AInt{M}{u,c}(t)\,\big|_{\,u=u_{t}(c)}\sim\alpha$, $t\to\infty$, whence the
result.
\end{proof}

The following result is an alternative (or addition) to
Theorem~\ref{ergtrewghwerg} for $0\leqslant c\leqslant\cS$, when both
$\Ruat(c)$ and $\Huat(c)$ tend to infinity, as $t\to\infty$. Of particular
interest is the right $t^{-1/2}$-neighborhood of the point $\cS$.

\begin{figure}[t]
\center{\includegraphics[scale=.8]{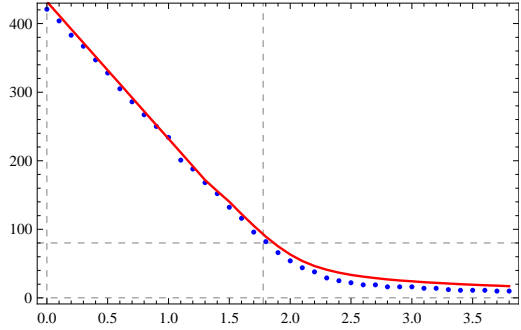}}
\caption{\small Graphs ($X$-axis ic $c$) of $\Huat(c)$ and simulated values of
$\Ruat(c)$ for $T$ exponentially distributed with parameter $\paramT=4/5$, $Y$
Pareto with parameters $\parA{Y}=10$, $\parB{Y}=0{.}05$, $\alpha=0{.}05$, and
$t=200$. Vertical grid line: $\cS=1{.}7778$. Horizontal grid line: simulated
$\Ruat(\cS)=80$.}\label{w4ytrujrtjrY}
\end{figure}

\begin{theorem}\label{asdgshbndfnf}
Suppose that conditions of Theorem~\ref{ergtrewghwerg} are satisfied. Then for
$0<c<\cS+\frac{D}{M^{\,3/2}}\,\delta\,t^{-1/2}$, $0\leqslant\delta<K$, we have
\begin{equation*}
\big|\,\Ruat(c)-\Huat(c)\,\big|=\overline{o}(1),\quad t\to\infty.
\end{equation*}
\end{theorem}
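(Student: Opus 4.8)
The plan is to read $\Ruat(c)$ and $\Huat(c)$ as roots of two equations which Theorem~\ref{esrytrf} forces to be close, and to transfer this closeness to the roots by a mean value / implicit function argument of the kind used in the proof of Theorem~\ref{ergtrewghwerg} (see also Theorem~\ref{etrtjt}). First I would note that throughout the range in question both levels tend to infinity as $t\to\infty$: for $\Huat(c)$ this is Theorems~\ref{drtyryhrXX}, \ref{tyukytkXX}, \ref{rthtrhjrthjXX} and \ref{tyurtutXX}, whose sub-cases tile the interval $0<c<\cS+\frac{D}{M^{\,3/2}}\delta t^{-1/2}$, and for $\Ruat(c)$ it is Theorems~\ref{gyuiytik}, \ref{dtfyjfgjk} and \ref{ergtrewghwerg}. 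Applying Theorem~\ref{esrytrf} at $u=\Ruat(c)$ therefore gives $\alpha=\AInt{M}{\Ruat(c),c}(t)+\rho_{t}$ with $|\rho_{t}|=\underline{O}(\ln u/u)+\underline{O}(t^{-1/2})$, $u=\Ruat(c)$, while $\AInt{M}{\Huat(c),c}(t)=\alpha$ by the defining equation \eqref{edthr6j}; subtracting,
\[
\AInt{M}{\Ruat(c),c}(t)-\AInt{M}{\Huat(c),c}(t)=-\rho_{t}.
\]
Since for $u$ large the map $u\mapsto\AInt{M}{u,c}(t)$ is strictly decreasing (Remark~\ref{w45ey6u5}), the mean value theorem converts this into
\[
\Ruat(c)-\Huat(c)=\frac{-\rho_{t}}{\AInt{M}{\tilde u,c}^{\,(1,0)}(t)}
\]
for some $\tilde u$ between $\Ruat(c)$ and $\Huat(c)$, and everything is reduced to the single estimate $|\rho_{t}|=\overline{o}\big(\big|\AInt{M}{\tilde u,c}^{\,(1,0)}(t)\big|\big)$, $t\to\infty$.

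For the denominator I would use the closed forms \eqref{drgerherhe}--\eqref{asdfgreherXX} for $\Elem{u,c}{1}(t)$, $\Elem{u,c}{2}(t)$, the expression \eqref{qwertyerjk} for $\AInt{M}{u,c}^{\,(1,0)}(t)$, and the asymptotic location of $\Huat(c)$ (hence of $\tilde u$) supplied by Theorems~\ref{drtyryhrXX}--\ref{tyurtutXX}; this shows $\big|\AInt{M}{\tilde u,c}^{\,(1,0)}(t)\big|$ to be bounded below by a quantity of order $t^{-1/2}$ in the relevant range, the contribution of that order being the boundary term $-\frac{ct}{u\,(u+ct)}\,\Ugauss{cM(1+\frac{ct}{u})}{\frac{c^{2}D^{2}}{u}(1+\frac{ct}{u})}\big(\frac{ct}{u}\big)$ of \eqref{qwertyerjk}. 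For the numerator one order of precision beyond Theorem~\ref{esrytrf} is needed, so I would instead start from Theorem~\ref{rtutujrtirt6}, which gives $\rho_{t}=\int_{0}^{t}\AInt{M}{u,c}(t-v)f_{\T{1}}(v)\,dv-\AInt{M}{u,c}(t)+\underline{O}(\ln u/u)$, then — following the $\T{1}$-shift reading recorded after Theorem~\ref{er6u6rti76} — replace the difference of the first two terms by $-\E\T{1}\,\frac{\partial}{\partial t}\AInt{M}{u,c}(t)+\underline{O}(t^{-1})$, and finally check that $\frac{\partial}{\partial t}\AInt{M}{u,c}(t)=\overline{o}(t^{-1/2})$ over the range; this is most delicate in the right $t^{-1/2}$-neighbourhood of $\cS$, where $u$ is of order $\sqrt{t}$ and direct differentiation in $t$ of the closed forms \eqref{drgerherhe}--\eqref{asdfgreherXX} produces a term of order strictly smaller than $t^{-1/2}$. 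Combining the two estimates gives $|\rho_{t}|=\overline{o}(t^{-1/2})=\overline{o}\big(\big|\AInt{M}{\tilde u,c}^{\,(1,0)}(t)\big|\big)$, whence the result.

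The main obstacle is precisely this sharpening of $\rho_{t}$: Theorem~\ref{esrytrf} alone only supplies $|\rho_{t}|=\underline{O}(t^{-1/2})$, which is exactly the order of the denominator and hence insufficient, so one must gain the extra order by using \emph{together} the $\T{1}$-shift structure behind Theorems~\ref{rtutujrtirt6} and \ref{er6u6rti76} and the near-flatness of $t\mapsto\AInt{M}{u,c}(t)$ close to the point $\cS$. The other ingredients are routine: the lower bound for $\big|\AInt{M}{u,c}^{\,(1,0)}(t)\big|$ follows from the closed forms above, and the mean value / implicit function step proceeds exactly as in the proof of Theorem~\ref{ergtrewghwerg}.
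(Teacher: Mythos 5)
Your skeleton is exactly what the paper has in mind: the paper's entire proof of this theorem is the single sentence that it is a ``standard proof of the proximity of implicitly defined functions, when they are defined by equations close to each other'', and reading $\Ruat(c)$ and $\Huat(c)$ as roots of $\P\big\{\Tich{u,c}\leqslant t\big\}=\alpha$ and $\AInt{M}{u,c}(t)=\alpha$, subtracting, and dividing by $\AInt{M}{\tilde u,c}^{\,(1,0)}(t)$ via the mean value theorem is precisely that standard argument. You have also correctly located the crux: since $\big|\AInt{M}{\tilde u,c}^{\,(1,0)}(t)\big|\asymp t^{-1/2}$, the quotient is $\overline{o}(1)$ only if the discrepancy $\rho_t$ between the two defining equations is $\overline{o}\big(t^{-1/2}\big)$.

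Your resolution of that crux, however, does not go through at either end of the range. Near $c=\cS$ one has $u\asymp\sqrt{t}$, so the surviving remainder $\underline{O}(\ln u/u)$ of Theorem~\ref{rtutujrtirt6} is of order $\ln t\cdot t^{-1/2}$, which is \emph{not} $\overline{o}\big(t^{-1/2}\big)$; the needed gain is lost there, not in the $\T{1}$-shift term. Conversely, for $c$ bounded away from $\cS$ (where $u\asymp t$) the claim $\frac{\partial}{\partial t}\AInt{M}{u,c}(t)=\overline{o}\big(t^{-1/2}\big)$ is false: this derivative equals $\frac{c}{u}\,\frac{1}{x_0+1}\,\Ugauss{cM(x_0+1)}{\frac{c^{2}D^{2}}{u}(x_0+1)}(x_0)$ with $x_0=ct/u$, and at $u=\Huat(c)$ the point $x_0$ lies within $O(1)$ standard deviations of the mean while the standard deviation is $\asymp u^{-1/2}\asymp t^{-1/2}$, so the density factor is $\asymp\sqrt{t}$ and the whole expression is $\asymp t^{-1/2}$ --- exactly the order of the denominator. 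The mean value quotient then yields only $\underline{O}(1)$ (respectively $\underline{O}(\ln t)$ near $\cS$); indeed, the $\T{1}$-shift reading recorded after Theorem~\ref{er6u6rti76}, together with the constant correction to $\E\homV{t}$ in \eqref{sdrthrjh}, suggests that for fixed $c\in(0,\cS)$ the difference $\Ruat(c)-\Huat(c)$ tends to a non-zero constant of the form $(\cS-c)\,\E\T{1}+\mathrm{const}$. So either an additional cancellation must be exhibited, or the $\overline{o}(1)$ claim must be weakened to $\underline{O}(1)$; your honest flagging of the obstacle is warranted, but the fix you propose does not remove it, and the paper supplies no detail that would.
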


\begin{proof}[Proof of Theorem~\ref{asdgshbndfnf}]\label{asergffdf}
This is standard proof of the proximity of implicitly defined functions, when
they are defined by equations close to each other.
\end{proof}

A numerical illustration of the proximity of the original and heuristic
fixed-probability levels is Fig.~\ref{w4ytrujrtjrY}. Although $\Huat(c)$ seems
to be close to $\Ruat(c)$ for all $c\geqslant 0$, theoretically there is no
reason to expect that $\Ruat(c)$ and $\Huat(c)$ are close to each other for
$c>K\cS$, $K>1$, i.e., when they do not tend to infinity, as $t\to\infty$.

\subsection{Elementary asymptotic bounds for the fixed-probability level}\label{srtyhtrj}

\begin{figure}[t]
\center{\includegraphics[scale=.8]{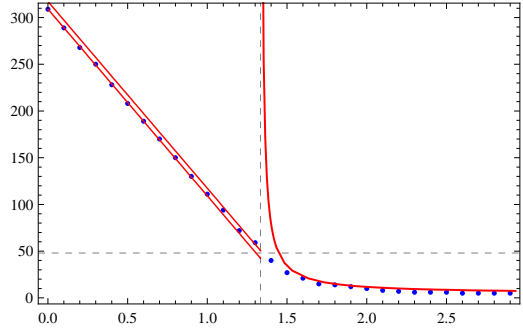}}
\caption{\small Graphs ($X$-axis is $c$) of two-sided bounds
\eqref{adsfgsdbsb}, when $0\leqslant c\leqslant\cS$, of the upper bound, when
$c>\cS$, and of simulated values of $\Ruat(c)$, drawn for $T$ which is Erlang
with parameters $\paramT=8/5$, $k=2$ and $Y$ exponentially distributed with
parameter $\paramY=3/5$, and $\alpha=0{.}05$, $t=200$. Vertical grid line:
$\cS=4/3$. Horizontal grid line: $\Ruat(\cS)=48$.}\label{egrrgd}
\end{figure}

For $0\leqslant c\leqslant\cS$, elementary asymptotic bounds
\begin{equation}\label{adsfgsdbsb}
\begin{aligned}
(\cS-c)\,t+\dfrac{D}{M^{\,3/2}}\,&\NQuant{\alpha}\sqrt{t}\,(1+{o}(1))\leqslant\Ruat(c)
\\[0pt]
&\leqslant(\cS-c)\,t+\dfrac{D}{M^{\,3/2}}\,\NQuant{\alpha/2}\sqrt{t}\,(1+{o}(1)),\quad
t\to\infty,
\end{aligned}
\end{equation}
follow straightforwardly from \eqref{asdfgrfjhf}.

For $c>\cS$, elementary upper bounds, quite satisfactory for $c>K\cS$ with
$K>1$ large enough, are also straightforward in many cases of interest. In
particular, when $T$ and $Y$ are exponentially distributed with parameters
$\paramT$ and $\paramY$, we have $\cS=\paramT/\paramY$ and (see, e.g.,
\cite{[Rolski=et=al.=1999]})
$\P\{\Tich{u,c}<\infty\}=(1-\adjustL/\paramY)\,e^{-\adjustL\,u}$ for all
$u\geqslant 0$, where $\adjustL=\paramY-\paramT/c$. This rewrites as
$\P\{\Tich{u,c}<\infty\}=\left(\paramT/(c\paramY)\right)
\,\exp\left\{-(\paramY-\paramT/c)\,u\right\}$; by simple calculations we have
\begin{equation*}
\Ruat(c)\leqslant\max\left\{\,0,-\frac{\ln\left(\alpha
c\paramY/\paramT\right)}{\paramY-\paramT/c}\right\},\quad c>\cS.
\end{equation*}

When $T$ is exponentially distributed with parameter $\paramT$ and the
distribution of $Y$ is light-tailed, but non-exponential, we have
$\cS=\paramT\,\E{Y}$ and (see, e.g., \cite{[Rolski=et=al.=1999]})
$\P\{\Tich{u,c}<\infty\}\leqslant e^{-\adjustL\,u}$ for all $u\geqslant 0$,
where $\adjustL$ is a positive solution to the equation
$\E\exp\{\adjustL\,Y\}=1+c\adjustL/\paramT$. Therefore, we have
\begin{equation*}
\Ruat(c)\leqslant-{\ln\alpha}/{\adjustL},\quad c>\cS.
\end{equation*}

When $Y$ is exponentially distributed with parameter $\paramY$ and the
distribution of $T$ is arbitrary, we have $\cS=1/(\paramY\,\E{T})$ and (see,
e.g., \cite{[Rolski=et=al.=1999]})
$\P\{\Tich{u,c}<\infty\}=(1-\adjustL/\paramY)\,e^{-\adjustL u}$ for all
$u\geqslant 0$, where $\adjustL$ is a positive solution to the equation
$\E\exp\{-\adjustL\,c\,T\}=1-\adjustL/\paramY$. Bearing in mind that
$1-\adjustL/\paramY\leqslant 1$, we have
\begin{equation*}
\Ruat(c)\leqslant-{\ln\alpha}/{\adjustL},\quad c>\cS.
\end{equation*}
In this case, the elementary bounds for the fixed-probability level are shown
in Fig.~\ref{egrrgd}.

\section{Derivatives of implicit function}

The derivatives of an implicit function defined by the equation $F(x,y)=0$,
$x,y\in\Rline$, can be obtained (see, e.g., \cite{[Widder=1947]}, Chapter~I,
\S~5.2 and \S~5.3) without finding this implicit function in closed form.

\begin{theorem}\label{etrtjt}
Assume that the function $F(x,y)$, $x,y\in\Rline$, possesses partial
derivatives up to second order, which are continuous in some neighborhood of a
solution $(x_0,y_0)$ of the equation $F(x,y)=0$. If $\frac{\partial}{\partial
y}F(x_0,y_0)\ne 0$, then there exists an $\epsilon>0$ and a unique continuously
differentiable function $f$ such that $f(x_0)=y_0$ and $F(x,f(x))=0$ for
$|x-x_0|<\epsilon$. Moreover, for $|x-x_0|<\epsilon$ we have
\begin{equation}\label{ewqrfghwre}
\left.f^{\,\prime}(x)=-\frac{\frac{\partial}{\partial
x}F(x,y)}{\frac{\partial}{\partial y}F(x,y)}\;\right|_{\,y=f(x)},
\end{equation}
and
\begin{equation}\label{ewryjhnmfm}
\begin{aligned}
f^{\,\prime\prime}(x)=&-\left(\frac{\frac{\partial^{2}}{\partial
x^{2}}F(x,y)}{\frac{\partial}{\partial
y}F(x,y)}-\frac{2\,\frac{\partial^{2}}{\partial x\,\partial y}F(x,y)\,
\frac{\partial}{\partial x}F(x,y)}{\left(\frac{\partial}{\partial
y}F(x,y)\right)^{2}}\left.+\frac{\frac{\partial^{2}}{\partial
y^{2}}F(x,y)\left(\frac{\partial}{\partial x}F(x,y)\right)^{2}}
{\left(\frac{\partial}{\partial
y}F(x,y)\right)^{3}}\right)\,\right|_{\,y=f(x)}.
\end{aligned}
\end{equation}
\end{theorem}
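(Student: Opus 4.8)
The plan is to run the classical argument for the implicit function theorem --- the existence and uniqueness statement is exactly the result quoted from \cite{[Widder=1947]}, so one may either cite it directly or reproduce the short monotonicity proof --- and then extract \eqref{ewqrfghwre} and \eqref{ewryjhnmfm} by differentiating the defining identity $F(x,f(x))\equiv 0$ once and twice. For the existence step, I would assume, replacing $F$ by $-F$ if necessary, that $\frac{\partial}{\partial y}F(x_0,y_0)>0$; by continuity $\frac{\partial}{\partial y}F>0$ on a closed box $[x_0-\epsilon_0,x_0+\epsilon_0]\times[y_0-\delta,y_0+\delta]$, so $y\mapsto F(x,y)$ is strictly increasing there for each fixed $x$ in the interval. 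Since $F(x_0,y_0)=0$ this gives $F(x_0,y_0-\delta)<0<F(x_0,y_0+\delta)$, and by continuity in $x$ these strict inequalities persist for $|x-x_0|<\epsilon$ with some $0<\epsilon\leqslant\epsilon_0$. The intermediate value theorem then yields, for each such $x$, a unique $y=f(x)\in(y_0-\delta,y_0+\delta)$ with $F(x,f(x))=0$, and in particular $f(x_0)=y_0$; continuity of $f$ follows by the same squeezing argument applied on arbitrarily thin sub-boxes around $(x,f(x))$.

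Next I would establish differentiability together with \eqref{ewqrfghwre}. Writing the identity as $F(x+h,f(x+h))-F(x,f(x))=0$ and splitting it as $\big(F(x+h,f(x+h))-F(x+h,f(x))\big)+\big(F(x+h,f(x))-F(x,f(x))\big)=0$, two applications of the mean value theorem give $f(x+h)-f(x)=-\dfrac{\frac{\partial}{\partial x}F(\eta,f(x))}{\frac{\partial}{\partial y}F(x+h,\xi)}\,h$ for suitable intermediate points $\eta,\xi$; letting $h\to 0$ and using continuity of the first partials and of $f$ (and that $\frac{\partial}{\partial y}F(x,f(x))>0$, since $(x,f(x))$ lies in the box above) yields both the differentiability of $f$ and formula \eqref{ewqrfghwre}. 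In particular $f^{\,\prime}$ is a quotient of continuous functions, hence continuous; and since by hypothesis $F\in C^{2}$ while \eqref{ewqrfghwre} exhibits $f^{\,\prime}$ as a $C^{1}$ function of $x$, the function $f$ is in fact $C^{2}$, which legitimises the next step.

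Finally, for \eqref{ewryjhnmfm} I would differentiate the identity $\frac{\partial}{\partial x}F(x,f(x))+\frac{\partial}{\partial y}F(x,f(x))\,f^{\,\prime}(x)=0$ once more in $x$, applying the chain rule term by term and using Schwarz's theorem $\frac{\partial^{2}}{\partial x\,\partial y}F=\frac{\partial^{2}}{\partial y\,\partial x}F$ (valid since the second partials are continuous). This produces $\frac{\partial^{2}}{\partial x^{2}}F+2\,\frac{\partial^{2}}{\partial x\,\partial y}F\,f^{\,\prime}+\frac{\partial^{2}}{\partial y^{2}}F\,(f^{\,\prime})^{2}+\frac{\partial}{\partial y}F\,f^{\,\prime\prime}=0$, with all partials evaluated at $(x,f(x))$. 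Solving for $f^{\,\prime\prime}$, then substituting $f^{\,\prime}=-\dfrac{\frac{\partial}{\partial x}F}{\frac{\partial}{\partial y}F}$ from \eqref{ewqrfghwre}, and collecting the terms over the common denominator $\big(\frac{\partial}{\partial y}F\big)^{3}$, yields precisely the right-hand side of \eqref{ewryjhnmfm}.

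There is no real conceptual obstacle here: this is a textbook fact, explicitly attributed in the statement to \cite{[Widder=1947]}. The only point requiring a little attention is the bootstrap showing $f\in C^{2}$, so that the second differentiation is legitimate; this follows, as above, from $F\in C^{2}$ together with the closed form \eqref{ewqrfghwre} for $f^{\,\prime}$. Everything else is the routine bookkeeping of the chain rule and of clearing denominators.
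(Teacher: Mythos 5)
Your proposal is correct: the existence and continuity argument via monotonicity and the intermediate value theorem, the mean-value-theorem derivation of \eqref{ewqrfghwre}, the $C^2$ bootstrap, and the second differentiation of $F(x,f(x))\equiv 0$ followed by substitution of $f^{\,\prime}=-F_x/F_y$ all check out, and the resulting expression matches \eqref{ewryjhnmfm} exactly. The paper itself offers no proof beyond the citation to Widder, and your argument is precisely the standard one found there, so there is nothing to compare beyond noting that you have filled in the routine details correctly.
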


\end{document}